\pgfplotsset{compat=1.14}
\pgfplotsset{plot coordinates/math parser=false}
\crefname{algorithm}{algorithm}{algorithms}
\Crefname{algorithm}{Algorithm}{Algorithms}
\newcommand*{\algotitle}[2]{%
  \stepcounter{algocf}%
  \hypertarget{algocf.title.\theHalgocf}{}%
  \NR@gettitle{#1}%
  \label{#2}%
  \addtocounter{algocf}{-1}%
}
\def\imod#1{\allowbreak\mkern10mu({\operator@font mod}\,\,#1)}
\newtheorem{cor}{Corollary}
\newtheorem{thm}{Theorem}
\newtheorem{lem}{Lemma}
\newtheorem{rem}{Remark}
\def\x{{\mathbf x}}
\def\z{{\mathbf z}}
\def\f{{\mathbf f}}
\def\v{{\mathbf v}}
\def\a{{\mathbf a}}
\def\k{{\mathbf k}}
\def\h{{\mathbf h}}
\def\F{{\mathbf F}}
\DeclareMathOperator{\supp}{supp}
	\renewcommand{\theALC@line}{(\Roman{ALC@line})}
\begin{document}
\newenvironment{frcseries}{\fontfamily{frc} \selectfont}{}
\newcommand{\textfrc}[1]{{\frcseries #1}}
\newcommand{\mathfrc}[1]{\text{\textfrc{#1}}}
\newcommand{\C}{\ensuremath{\mathbbm{C}}}
\renewcommand{\algorithmicrequire}{\textbf{Input:}}
\renewcommand{\algorithmicensure}{\textbf{Output:}}
\algsetup{indent=0.25in,linenodelimiter=,linenosize=\small}

\title{Sparse Fourier Transforms on Rank-1 Lattices for the Rapid and Low-Memory Approximation of Functions of Many Variables}
\author{Craig Gross\thanks{Michigan State University, Department of Mathematics, \texttt{grosscra@msu.edu}.  Supported in part by NSF DMS 1912706.} 
\and 
Mark Iwen\thanks{Michigan State University, Department of Mathematics, and the Department of Computational Mathematics, Science and Engineering (CMSE), \texttt{markiwen@math.msu.edu}.  Supported in part by NSF DMS 1912706.}
\and
Lutz K\"{a}mmerer\thanks{Chemnitz University of Technology, Faculty of Mathematics, \texttt{lutz.kaemmerer@mathematik.tu-chemnitz.de}.}
\and 
Toni Volkmer\thanks{Chemnitz University of Technology, Faculty of Mathematics, \texttt{toni.volkmer@math.tu-chemnitz.de}.  Supported in part by S\"achsische Aufbaubank -- F\"orderbank -- (SAB) 100378180.} 
}

\maketitle

\begin{abstract}
This paper considers fast and provably accurate algorithms for approximating smooth functions on the $d$-dimensional torus, $f: \mathbbm{ T }^d \rightarrow \mathbbm{C}$, that are sparse (or compressible) in the multidimensional Fourier basis.  In particular, suppose that the Fourier series coefficients of $f$, $\{c_{\bf k} (f) \}_{{\bf k} \in \mathbbm{Z}^d}$, are concentrated in a given arbitrary finite set $\mathcal{I} \subset \mathbbm{Z}^d$ so that
$$\min_{\Omega \subset \mathcal{I} ~s.t.~ \left| \Omega \right| =s }\left \| f - \sum_{{\bf k} \in \Omega} c_{\bf k} (f) \, \mathbbm{ e }^{ -2 \pi \mathbbm{ i } \k \cdot \circ} \right\|_2 < \epsilon \|f \|_2$$ holds for $s \ll \left| \mathcal{I} \right|$ and $\epsilon \in (0,1)$ small.  In such cases we aim to both identify a near-minimizing subset $\Omega \subset \mathcal{I}$ and accurately approximate its associated Fourier coefficients $\{ c_{\bf k} (f) \}_{{\bf k} \in \Omega}$ as rapidly as possible.  In this paper we present both deterministic and explicit as well as randomized algorithms for solving this problem using $\mathcal{O}(s^2 d \log^c (|\mathcal{I}|))$-time/memory and $\mathcal{O}(s d \log^c (|\mathcal{I}|))$-time/memory, respectively.  Most crucially, all of the methods proposed herein achieve these runtimes while simultaneously satisfying theoretical best $s$-term approximation guarantees which guarantee their numerical accuracy and robustness to noise for general functions.

These results are achieved by modifying several different one-dimensional Sparse Fourier Transform (SFT) methods to subsample a function along a reconstructing rank-1 lattice for the given frequency set $\mathcal{I} \subset \mathbbm{Z}^d$ in order to rapidly identify a near-minimizing subset $\Omega \subset \mathcal{I}$ as above without having use anything about the lattice beyond its generating vector.  This requires the development of new fast and low-memory frequency identification techniques capable of rapidly recovering vector-valued frequencies in $\mathbbm{Z}^d$ as opposed to recovering simple integer frequencies as required in the univariate setting.  Two different multivariate frequency identification strategies are proposed, analyzed, and shown to lead to their own best $s$-term approximation methods herein, each with different accuracy versus computational speed and memory tradeoffs.

\textbf{Keywords:} 
	Multivariate Fourier approximation,
	Approximation algorithms,
	Fast Fourier transforms,
	Sparse Fourier transforms,
	Rank-1 lattices,
	Fast algorithms

\textbf{Mathematics Subject Classification (2010):} 
	65T40,
	65D15,
	42B05,
	65Y20,
	65T50
\end{abstract}

\section{Introduction}

This paper considers methods for efficiently computing sparse Fourier transforms of multivariate periodic functions using rank-1 lattices.
In particular, for a function $ f: \mathbbm{ T }^d \rightarrow \C $ (where $ \mathbbm{ T } := [0, 1] $ with the endpoints identified), our goal is to compute the Fourier coefficients of $ f $,
\begin{equation*}
	c_\k (f) := \int_{ \mathbbm{ T }^d } f(\x) \, \mathbbm{ e }^{ -2 \pi \mathbbm{ i } \k \cdot \x } \; \mathrm{d} \x,
\end{equation*}
via samples of $ f $ at points in $ \mathbbm{ T }^d $.
Here, we assume that $f$ is from the
Wiener algebra $ \mathcal{W}(\mathbbm{ T }^d) := \{ f \in L^1(\mathbbm{ T }^d) \colon \| f ||_{ \mathcal{W}(\mathbbm{ T }^d) } := \sum_{ \k \in \mathbbm{ Z }^d } |c_\k (f)| < \infty \} $,
and that $f$ is well approximated by just a few of the dominant terms in its Fourier expansion (i.e., has an accurate sparse approximation in the Fourier basis).

One quasi-Monte Carlo approach which is especially popular in the context of Fourier approximations is sampling along rank-1 lattices adapted to frequency spaces of interest \cite{Temlyakov1986,Temlyakov1993,Li2003,Kuo2006,MuntheKaas2012,kaemmererdiss,Kaemmerer2015,Plonka2018,Kuo2020}.
In the standard rank-1 lattice approach, a one-dimensional, length-$M$ discrete Fourier transform (DFT) is applied to samples of $ f $ along a rank-1 lattice $ \Lambda(\z, M) $ with generating vector $ \z \in \mathbbm{ Z }^d $ over $ \mathbbm{ T }^d $ defined by 
\begin{equation*}
	\Lambda(\z, M) := \left\{ \frac{j}{M}\z \bmod 1 \mid j \in [M] := \{0, 1, \ldots, M - 1\}  \right\}.
\end{equation*}
Writing any function $ f \in \mathcal{W}(\mathbbm{ T }^d) $ in terms of its Fourier series, $ f = \sum_{ \k \in \mathbbm{ Z }^d } c_\k(f) \, \mathbbm{ e }^{ 2 \pi \mathbbm{ i } \k \cdot \circ } $, the DFT of these samples along the lattice is exactly equivalent to the DFT of the univariate function
\begin{equation}
	\label{eq:LatticeFunction}
	a(t) := \sum_{ \k \in \mathbbm{ Z }^d } c_\k(f) \, \mathbbm{ e }^{2 \pi \mathbbm{ i } \k \cdot \z \, t }
\end{equation}
using the equispaced samples $ \big(a( j/M ) \big)_{ j \in [M] } = \big( f(\x) \big)_{ \x \in \Lambda(\z, M) } $.
Just as the DFT of equispaced samples of $ a $ can be used to approximate its Fourier coefficients, so then can this DFT be used to help approximate the original Fourier coefficients $c_\k(f)$ of $ f $.
Though the process of matching the discrete coefficients to corresponding coefficients of $ f $ is nontrivial (see the following paragraph for further discussion), this multivariate to univariate transformation allows us to carry over many standard one-dimensional DFT results in a straightforward manner.
In particular, under our assumption of $ f $ being Fourier sparse or compressible, one-dimensional sparse Fourier transform (SFT) techniques \cite{Gilbert2005,Gilbert2008,Iwen2010,Hassanieh2012,Christlieb2013,Segal2013,Iwen13,Gilbert2014,Christlieb2016,Kapralov2016,Plonka2017,Plonka2018Deterministic,Mehri2019,Bittens2019Sparse,Bittens2019Real} become particularly appealing as they can sidestep runtimes which depend polynomially on the bandwidth, in this case~$ M $, instead running sublinearly in the magnitude of the underlying frequency space under consideration.
Additionally, these techniques often furnish recovery guarantees for Fourier compressible functions in terms of best $ s $-term approximations in the same vein as compressed sensing results \cite{Cohen2009,Foucart2013}.

However, in order for our univariate DFT to be properly related to the original multivariate Fourier coefficients~$c_\k (f)$, any aliasing must not produce extraneous collisions which perturb the multivariate to univariate transformation.
Specifically, after applying a length-$ M $ DFT to the univariate function~$ a $ in \eqref{eq:LatticeFunction}, all one-dimensional frequencies $ \k \cdot \z $ are aliased to their residues modulo $ M $.
Restricting our attention to some finite multivariate frequency set $ \mathcal{I} \subset \mathbbm{ Z }^d $, we then consider \emph{reconstructing} rank-1 lattices for $ \mathcal{I} $, that is, lattices where the mapping $ m_{ \z, M }: \mathcal{I} \rightarrow [M]$ given by $ \k \mapsto \k \cdot \z \bmod M $ is injective.
In this case, each coefficient produced by the DFT of $ a $ can be uniquely mapped back to the corresponding multivariate frequency~$\k$ of $ f $ by inverting $ m_{ \z, M } $.

In order to know or store this inverse map we require the calculation of $ m_{ \z, M }(\mathcal{I}) $.
When we consider a function with a sparse Fourier series however, any benefit in using an SFT to calculate the DFT of samples along the lattice is lost in comparison to the $ \mathcal{O}(d\, | \mathcal{I} |) $ size and operation count of the inverse computation.
For potentially large search spaces of multivariate frequencies $ \mathcal{I} $ such as a full cube $ \mathcal{I} = \left(\left(-\left \lceil \frac{N}{2} \right \rceil, \left \lfloor \frac{N}{2}  \right \rfloor\right] \cap \mathbbm{ Z }\right)^d $, both the time and memory complexity of this algorithm suffers from the curse of dimensionality.

The methods given in this paper instead work to use samples along possible larger lattices to produce sparse approximations of the Fourier transform of $ f $ without directly inverting $ m_{ \z, M } $.
The two algorithms considered below are able to operate on SFTs of manipulations of $ a $ in order to relate the univariate coefficients to their multivariate counterparts in $ o(| \mathcal{I} |) $-time.
This will allow the methods developed herein to run faster and with less memory than it takes to simply enumerate the frequency set $ \mathcal{I} $ and/or store $ m_{\z,M}( \mathcal{I} ) $ whenever $f$ has a sufficiently accurate sparse approximation.

\subsection{Prior work}%
\label{sub:prior_work}

Much recent work has considered the problem of quickly recovering both exactly sparse multivariate trigonometric polynomials as well as approximating more general functions by sparse trigonometric polynomials using dimension-incremental approaches \cite{volkmerdiss,Potts2016,Choi2020Sparse,Choi2019Sparse}.
These methods recover multivariate frequencies adaptively by searching for energetic frequencies on projections of the potential coefficient space $ \mathcal{I} \subset \left(\left(-\left \lceil \frac{N}{2} \right \rceil, \left \lfloor \frac{N}{2}  \right \rfloor\right] \cap \mathbbm{ Z }\right)^d $ into lower dimensional spaces.
These lower dimensional candidate sets are then paired together to build up a fully $ d $-dimensional search space smaller than the original one, which is expected to support the most energetic frequencies (see e.g., \cite[Section 3]{Kaemmerer2020} and the references within for a general overview).

In the context of Fourier methods, lattice-based techniques work well to handle support identification on the intermediary, lower-dimensional candidate sets, and especially recently, techniques based on multiple rank-1 lattices have shown success \cite{Kaemmerer2017,Kaemmerer2020}.
Though the total complexity in each of these steps is manageable and can be kept linear in the sparsity, these steps must be repeated in general to ensure that no potential frequencies have been left out.
In particular, this results in at least $ \mathcal{O}(ds^2N) $ operations (up to logarithmic factors) for functions supported on arbitrary frequency sets in order to obtain approximations that are guaranteed to be accurate with high probability.
Though from an implementational perspective, this runtime can be mitigated by completing many of the repetitions and initial one-dimensional searches in parallel, once pairing begins, the results of previous iterations must be synchronized and communicated to future steps, necessitating serial interruptions.

Other earlier works include \cite{Iwen13} in which previously existing univariate SFT results \cite{Iwen2010,Segal2013} were refined and adapted to the multivariate setting.
Though the resulting complexity on the dimension is well above the dimension-incremental approaches, deterministic guarantees are given for multivariate Fourier approximation in $ \mathcal{O}(d^4 s^2) $ (up to logarithmic factors) time and memory, as well as a random variant which dropped to linear scaling in $ s $, leading to a runtime on the order of $ \mathcal{O}(d^4 s) $ with respect to $ s $ and $ d $.
Additionally, the compressed sensing type guarantees in terms of Fourier compressibility of the function under consideration carry over from the univariate SFT analysis.
The scheme essentially makes use of a reconstructing rank-1 lattice on a superset of the full integer cube $ \mathcal{I} = \left(\left(-\left \lceil \frac{dN}{2} \right \rceil, \left \lfloor \frac{dN}{2}  \right \rfloor\right] \cap \mathbbm{ Z }\right)^d $ with certain number theoretic properties that allow for fast inversion of the resulting one-dimensional coefficients by the Chinese Remainder Theorem.
We note that this necessarily inflated frequency domain accounts for the suboptimal scaling in $ d $ above.

In \cite{Morotti2017}, another fully deterministic sampling strategy and reconstruction algorithm is given.
Like \cite{Iwen13} though, the method can only be applied to Fourier approximations over an ambient frequency space $ \mathcal{I} $ that is full $ d $-dimensional cube.
Moreover, the vector space structure exploited to construct the sampling sets necessitates that the side length $ N $ of this cube is the power of a prime.
However, the benefits to this construction are among the best considered so far: the method is entirely deterministic, has noise-robust recovery guarantees in terms of best $ s $-term estimates, the sampling sets used are on the order of $ \mathcal{O}(d^3 s^2 N) $, and the reconstruction algorithm's runtime complexity is on the order of $ \mathcal{O}(d^2 s^2 N^2) $ both up to logarithmic factors.
On the other hand, this algorithm still does not scale linearly in $ s $.

Finally, we discuss \cite{Choi2020High,Choi2019Multiscale}, a pair of papers detailing high-dimensional Fourier recovery algorithms which offer a simplified (and therefore faster) approach to lattice transforms and dimension-incremental methods.
These algorithms make heavy use of a one-dimensional SFT \cite{Christlieb2013,Christlieb2016} based on a phase modulation approach to discover energetic frequencies in a fashion similar to our Algorithm~\ref{alg:PhaseEnc} below.
The main idea is to recover entries of multivariate frequencies by using equispaced evaluations of the function along a coordinate axis as well as samples of the function at the same points slightly shifted (the remaining dimensions are generally ignored).
This shift in space produces a modulation in frequency from which frequency data can be recovered (cf.\ Lemma~\ref{lem:ShiftedFunctionCoefficients} and Algorithm~\ref{alg:PhaseEnc}).
By supplementing this approach with simple reconstructing rank-1 lattice analysis for repetitions of the full integer cube, the runtime and number of samples are given on average as $ \mathcal{O}(ds) $ up to logarithmic factors.

However, due to the possibility of collisions of multivariate frequencies under the hashing algorithms employed, these results hold only for random signal models.
In particular, theoretical results are only stated for functions with randomly generated Fourier coefficients on the unit circle with randomly chosen frequencies from a given frequency set.
Additionally, the analysis of these techniques assumes that the algorithm applied to the randomly generated signal does not encounter certain low probability (with respect to the random signal model considered therein) energetic frequency configurations.
Furthermore, the method is restricted in stability, allowing for spatial shifts in sampling bounded by at most the reciprocal of the side length of the multivariate frequency cube under consideration, and only exact recovery is considered (or recovery up to factors related to sample corruption by gaussian noise in \cite{Choi2019Multiscale}). In addition, no results given are proven concerning the approximation of more general periodic functions, e.g., compressible functions.

\subsection{Main contributions}%
\label{sub:main_contributions}

We begin with a brief summary of the benefits provided by our approach in comparison to the methods discussed above.
Below, we ignore logarithmic factors in our summary of the runtime/sampling complexities.
\begin{itemize}
	\item All variants, deterministic and random, of both algorithms presented in this paper have runtime and sampling complexities \textbf{linear in} $ d $ with best $ s $-term estimates for \textbf{arbitrary signals}.
		This is in contrast to the complexities of dimension-incremental approaches \cite{Choi2019Sparse,Choi2020Sparse,Kaemmerer2017,Kaemmerer2020} and the number theoretic approaches \cite{Iwen13,Morotti2017} while still achieving similarly strong best $ s $-term guarantees.
	\item Both algorithms proposed herein have randomized variants with runtime and sampling complexities \textbf{linear in} $ s $ with best $ s $-term estimates on \textbf{arbitrary signals} that hold \textbf{with high probability}.
		Thus, the randomized methods proposed in this paper achieve the efficient runtime complexities of \cite{Choi2020High,Choi2019Multiscale} while simultaneously exhibiting best $ s $-term approximation guarantees for general periodic functions thereby improving on the non-deterministic dimension incremental approaches \cite{Choi2019Sparse,Choi2020Sparse,Kaemmerer2017,Kaemmerer2020}.
	\item Both algorithms proposed herein have a deterministic variant with runtime and sampling complexities \textbf{quadratic} $ s $ with best $ s $-term estimates on \textbf{arbitrary signals} that also hold \textbf{deterministically}.
		This is in contrast to all previously discussed methods without deterministic guarantees, \cite{Choi2019Sparse,Choi2020Sparse,Kaemmerer2017,Kaemmerer2020,Choi2019Multiscale,Choi2020High}, as well as improving on prior deterministic results \cite{Iwen13,Morotti2017} for functions whose energetic frequency support sets $ \mathcal{I} $ are smaller than the full cube.
\end{itemize}

\subsubsection*{Overview of the methods and related theory}%
\label{ssub:overview_of_the_methods_and_related_theory}

We will build on the structure of the fast and potentially deterministic one-dimensional SFTs from \cite{Iwen13} and its discrete variant from \cite{Mehri2019} by applying those techniques along rank-1 lattices.
As previously discussed, the primary difficulty in doing so is determining a way to extract the desired multidimensional frequency information for those most energetic one-dimensional frequencies identified in an efficient and provably accurate way.
We propose and analyze two different methods for solving this problem herein.

The first frequency identification approach involves modifications of the phase shifting from \cite{Christlieb2013,Christlieb2016,Choi2020High,Choi2019Multiscale} in Algorithm~\ref{alg:PhaseEnc}.
By employing the phase shifting process from these works in conjunction with SFTs over an arbitrary reconstructing rank-1 lattice for our multivariate frequency search space $ \mathcal{I} $, we achieve a new class of fast method with several new benefits.
Notably, we are able to maintain error guarantees for any function (not just random signals) in terms of best Fourier $ s $-term approximations.
Additionally, we factor the instability and potential for collisions from \cite{Choi2020High,Choi2019Multiscale} into these best $ s $-term approximations, suffering only a linear factor of $ N $ from the more typical results produced by our second technique discussed in the next paragraph (cf.\ Corollaries \ref{cor:PhaseEncRecoveryDiscreteSFT} and \ref{cor:PhaseEncRecoverySublinearSFT}).
Finally, we are able to maintain quadratic in $ s $/deterministic and linear in $ s $/random runtime and sampling complexities while reducing the dependence on the dimension of the function's domain $ d $ from \cite{Iwen13} to a linear one (cf.\ Lemma \ref{lem:PhaseEncRecovery}).

Our second technique in Algorithm~\ref{alg:2dDFT} uses a more novel approach to applying SFTs to modifications of the multivariate function along a reconstructing rank-1 lattice.
By using a potentially larger rank-1 lattice than one that suffices only for being reconstructing on $ \mathcal{I} $, we restrict the function to only $ d - 1 $ dimensions of the lattice at a time, allowing one dimension to remain free.
Applying SFTs along the lattice constrained variables, FFTs in the free dimension, and synchronizing based on known Fourier coefficients (for example, from simply applying an SFT on the entire function restricted to the lattice) allows one to reconstruct the full multivariate coefficients with fewer possibilities for numerical instability.
In particular this produces more accurate best Fourier $ s $-term approximation guarantees (cf.\ Corollaries~\ref{cor:2dDFTRecoveryDiscreteSFT} and \ref{cor:2dDFTRecoverySublinearSFT}).
We again maintain the linear in $ d $, quadratic in $ s $/deterministic, and linear in $ s $/random sampling and runtime complexities, however, we now additionally incur a penalty of a quadratic factor of $ N $ (cf.\ Lemma~\ref{lem:2dDFTRecovery}).

We stress here that by compartmentalizing the translation from multivariate analysis to univariate analysis in Algorithms~\ref{alg:PhaseEnc} and \ref{alg:2dDFT} into the theory of rank-1 lattices, we additionally abstract our interaction with the multivariate frequency domain of interest.
As such, our techniques are suitable for any arbitrary frequency set of interest $ \mathcal{I} $ constrained only by our necessity for a reconstructing rank-1 lattice for $ \mathcal{I} $ (and potentially projections of $ \mathcal{I} $ in the case of Algorithm~\ref{alg:2dDFT}).
This flexibility allows our methods to supersede the results from \cite{Iwen13}, primarily with respect to the polynomial factor of $ d $ in our runtime and sampling complexities.
We remark that though the existence of the necessary reconstructing rank-1 lattice is a nontrivial requirement, there exist efficient construction algorithms for arbitrary frequency sets via deterministic component by component methods, see e.g., \cite{Kaemmerer2014,Kuo2020,Plonka2018}.

We also emphasize that the multivariate structures we employ are also entirely modular with respect to their underlying univariate components. 
More specifically, they can make use of any reasonably fast and theoretically sound SFT approach to produce resulting runtime and sampling estimates which scale well in the complexity of the underlying SFT algorithm (by only a factor of $ d $ in Algorithm~\ref{alg:PhaseEnc} and a factor of $ dN $ in Algorithm~\ref{alg:2dDFT}).
Lemmas~\ref{lem:PhaseEncRecovery} and \ref{lem:2dDFTRecovery} are therefore presented in a similarly modular fashion.
To provide specific recovery results we therefore use the univariate SFTs from \cite{Iwen13,Mehri2019}.
Notably, we also improve the theoretical approximation guarantees of these univariate SFTs in their own right and in the process include the addition of a robust variant of the discrete method in \cite{Mehri2019}.

Finally, the methods we present are trivially parallelizable so that in particular, a large majority of the additional factors of $ d $ or $ dN $ respectively in the runtimes of Algorithm~\ref{alg:PhaseEnc} or Algorithm~\ref{alg:2dDFT} discussed above can occur in parallel.

\subsection{Organization}%
\label{sub:organization}

The remainder of this paper is presented as follows: in Section~\ref{sec:notation-and-assumptions}, we set the notation, the notions of the Fourier transform, and the various types of manipulations we will be using in the sequel.
Section~\ref{sec:1dSFTs} reviews and further refines the univariate SFTs from \cite{Iwen13,Mehri2019} to suit our multivariate analysis.
Section~\ref{sec:multivariate-results} presents our main multivariate approximation algorithms and their analysis.
In particular, Section~\ref{sub:PhaseEnc} discusses the phase-shifting approach, while Section~\ref{sub:2dDFT} discusses the two-dimensional SFT/DFT combination approach.
Finally, we implement these two algorithms numerically and present the empirical results in Section~\ref{sec:numerics}.

\section{Notation and assumptions}
\label{sec:notation-and-assumptions}

\subsection{Multivariate}

We begin by defining a one-dimensional frequency band of length $ N $ as $ \mathcal{B}_N := \left(- \left \lceil \frac{N}{2} \right \rceil, \left \lfloor \frac{N}{2} \right \rfloor \right] \cap \mathbbm{ Z }$.
For a potentially large but finite multivariate frequency set $ \mathcal{I} $, which we think of as containing the most significant frequencies of the function under consideration, we choose $ N = \max_{ \ell \in [d] }(\max_{ \k \in \mathcal{I}} k_\ell - \min_{ \tilde \k \in \mathcal{I} } \tilde k_\ell ) + 1$ as the minimal width such that $ \mathcal{I} \subset \h + \mathcal{B}_N^d $ for some $ \h \in \mathbbm{ Z }^d $.
By appropriately modulating any multivariate function $ f : \mathbbm{ T }^d \rightarrow \C $ under consideration, i.e., considering $ \mathbbm{ e }^{ -2 \pi \mathbbm{ i } \h \cdot \circ } f $, we shift the frequencies of Fourier coefficients of $ f $ originally in $ \mathcal{I} $ to $ \mathcal{I} - \h \subset \mathcal{B}_N^d $.
Thus, we assume without loss of generality that $ \mathcal{I} \subset \mathcal{B}_N^d $ with $ N $ as above.
Without loss of generality, we will also assume that for a reconstructing rank-1 lattice $ \Lambda(\z, M) $, the generating vector satisfies $ \z \in [M]^d $.

To avoid confusion with the hat notation which will be reserved for univariate functions below, we denote the sequence of all Fourier coefficients (i.e., the Fourier transform) of a periodic function $ f: \mathbbm{ T }^d \rightarrow \C $ as $ c(f) = (c_\k(f))_{ \k \in \mathbbm{Z}^d } $, also writing this as simply $ c $ when the function is clear from context.
Its restriction to $ \mathcal{I} $ is denoted $ c(f)\rvert_{ \mathcal{I} } = (c_\k(f))_{ \k \in \mathcal{I} } $, and the best $ s $-term approximation, that is, its restriction to the support of the $ s $-largest magnitude entries, is denoted $ c_s^\mathrm{opt} $ or $ (c\rvert_{ \mathcal{I} })_s^\mathrm{opt} $ on $ \mathbbm{Z}^d $ or $ \mathcal{I} $, respectively.
We denote multiindexed vectors only defined on finite index sets (which are not restrictions of infinitely indexed sequences) in boldface, e.g., $ \mathbf{b} = (b_{ \k })_{ \k \in \mathcal{I} } $, as well as identify this multivariate vector as a one-dimensional vector $ \mathbf{b} \in \C^{ | \mathcal{I} |} $ via lexicographic ordering when dictated by context.
Again dictated by context, we also extend these multiindexed vectors to larger index sets by setting them to zero outside of their original domain.
For example, if $ \mathbf{b} = (b_\k)_{ \k \in \mathcal{I} } $ and $ c = (c_\k)_{ \k \in \mathbbm{ Z }^d } $,
\begin{equation*}
	\| \mathbf{b} - c\|_{ \ell^1( \mathbbm{ Z }^d) } = \sum_{ \k \in \mathcal{I} } |b_\k - c_\k| + \sum_{ \k \in \mathbbm{ Z }^d \setminus \mathcal{I} } |c_\k|.
\end{equation*}

In the multivariate approaches which follow, we will also make use of the shift operator $ S_{ \ell, \alpha } $ for dimension $ \ell \in [d] $ and shift $ \alpha \in \mathbbm{ R } $ defined by its action on the multivariate periodic function $ f: \mathbbm{ T }^d \rightarrow \C $ as
\begin{equation*}
	S_{ \ell, \alpha }(f)(x_1, \ldots, x_d) := f(x_1, \ldots, x_{ \ell - 1 }, (x_\ell + \alpha) \bmod 1, x_{ \ell + 1 }, \ldots, x_d).
\end{equation*}
When necessary, we will separate out coordinate $ \ell $ of a multivariate point $ \x \in \mathbbm{ T }^d $ or frequency $ \k \in \mathbbm{ Z }^d $, denoting the remaining coordinates as $ \x_\ell' \in \mathbbm{ T }^{ d - 1 }$ or $ \k_\ell' \in \mathbbm{ Z }^{ d - 1 } $.
With a slight abuse of notation, we can rewrite the original point or frequency as $ \x = (x_\ell, \x_\ell') $  or $ \k = (k_\ell, \k_\ell') $.

\subsection{Univariate}

For any univariate periodic function $ a: \mathbbm{ T } \rightarrow \C $, we define the vector $ \a \in \C^M $ as the vector of $ M $ equispaced samples of $ a $ on $ \mathbbm{ T } $, that is, $ \a = ( a( j/M ))_{ j \in [M] } $.
As in the multivariate case, we define the Fourier transform of $ a: \mathbbm{ T } \rightarrow \C $ as the sequence $ \hat a = ( \hat a_\omega )_{ \omega \in \mathbbm{ Z } } $ with
\begin{equation*}
	\hat a_\omega := \int_{ \mathbbm{ T } } a(t) \, \mathbbm{ e }^{ -2\pi \mathbbm{ i } \omega x } \, \mathrm{d} x \text{ for all } \omega \in \mathbbm{ Z }.
\end{equation*}
Additionally, we define the vector $ \hat \a \in \C^M $ as the restriction of $ \hat a $ to $ \mathcal{B}_M $.
If not explicitly stated, the length of the discretized function $ \a $ and Fourier transform $ \hat \a $ will be clear from context.
Note that $ \hat \a $ is not necessarily the discrete Fourier transform of $ \a $, which we define as
\begin{align*}
	(\F_M \, \a)_\omega &:= \frac{1}{M} \sum_{ j \in [M] } a_j \, \mathbbm{ e }^{ -2 \pi \mathbbm{ i } \omega j / M } = \frac{1}{M} \sum_{ j \in [M] } a \left( \frac{j}{M} \right) \mathbbm{ e }^{ -2 \pi \mathbbm{ i } \omega j / M }, \text{ where} \\
    \F_M &:= \left( \mathbbm{ e }^{ -2 \pi \mathbbm{ i } \omega j / M } / M \right)_{ j \in [M],\, \omega \in \mathcal{B}_M }
\end{align*}
is the discrete Fourier matrix.
Our convention here and in the remainder of the paper is to use zero-based indexing which is always taken implicitly modulo the length of the dimension, e.g., $ (\F_M)_{ 0, -1 } = (\F_M)_{ 0, M - 1 } $.

For any vector $ \mathbf{ b } \in \C^M $, we denote its best $ s $-term approximation $ \mathbf{ b }_{ s }^\mathrm{opt} $, where as above, $ \mathbf{ b }_s^\mathrm{opt} $ is the restriction of $ \mathbf{b} $ to its $ s $ largest magnitude entries.
In the sequel, we always assume that our sparsity parameters $ s $ are at most half the size of the vectors under consideration so that, e.g., $ \mathbf{b}_{ 2s }^\mathrm{opt} $ is well-defined.
Additionally as above, vectors can also be compared with other vectors on larger index sets than they are defined by simply setting the smaller vectors to zero outside of their original domain.

As for one-dimensional approximations, we will be considering SFT algorithms which, given sparsity parameter $ s $ and bandwidth $ M $, produce an $ s $-sparse approximation of the Fourier transform of a function $ a \in C( \mathbbm{ T } ) $ restricted to $ \mathcal{B}_M $.
Note that these are not necessarily discrete algorithms which take in $ \a $ as input.
We denote these algorithms $ \mathcal{A}_{ s, M}: C( \mathbbm{ T } ) \rightarrow \C^M $, which produce $ \mathcal{A}_{ s, M } a =: \v \in \C^M $ as approximations to $ \hat \a \in \C^M $ using some fixed number of samples of $ a $.

\section{One-dimensional sparse Fourier transform results}
\label{sec:1dSFTs}

Below, we summarize some of the previous work on one-dimensional sparse Fourier transforms which will be used in our multivariate algorithms.
Note that we will consider algorithms which produce $ 2s $-sparse approximations of the Fourier coefficients of a given signal and satisfy error guarantees in terms of the best $ 2s $ \emph{and} $ s $-term approximations.
We first review the sublinear-time algorithm from \cite{Iwen13} which uses fewer than $ M $ nonequispaced samples of a function.
Below, we will present slightly improved error bounds which necessitate the following lemma.

\begin{lem}
	\label{lem:ErrorInTauSigFrequencies}
	For $ \x \in \C^K $ and $ \mathcal{S}_\tau := \{ k \in [K] \mid |x_k| \geq \tau\} $, if $ \tau \geq \frac{\|\x - \x_s^\mathrm{opt}\|_1}{s} $, then $ | \mathcal{S}_\tau | \leq 2s $ and
	\begin{equation*}
		\|\x - \x\rvert_{ \mathcal{S}_\tau }\|_{ 2 } \leq \|\x - \x_{ 2s }^\mathrm{opt} \|_2 + \tau \sqrt{ 2s } .
	\end{equation*}
\end{lem}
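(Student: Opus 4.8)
The plan is to prove the two assertions separately: the cardinality bound $|\mathcal{S}_\tau|\le 2s$ via a short counting argument, and the norm estimate by splitting the complement of $\mathcal{S}_\tau$ against the support of the best $2s$-term approximation. Throughout I would use the paper's standing convention that $s$ is at most half the vector length, so $K\ge 2s$ and the supports of $\x_s^\mathrm{opt}$ and $\x_{2s}^\mathrm{opt}$ have exactly $s$ and $2s$ indices.

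For the cardinality bound, let $T\subseteq[K]$ be the support of $\x_s^\mathrm{opt}$, so that $|T|=s$ and $\x-\x_s^\mathrm{opt}=\x\rvert_{[K]\setminus T}$. I would argue by contradiction. If $|\mathcal{S}_\tau|\ge 2s+1$, then since $|T|=s$ we have $|\mathcal{S}_\tau\setminus T|\ge s+1$, and every index $k\in\mathcal{S}_\tau\setminus T$ satisfies $|x_k|\ge\tau$ by definition of $\mathcal{S}_\tau$ while lying in the support of the tail $\x-\x_s^\mathrm{opt}$. Summing these contributions gives $\|\x-\x_s^\mathrm{opt}\|_1\ge(s+1)\tau>s\tau$, hence $\tau<\|\x-\x_s^\mathrm{opt}\|_1/s$, contradicting the hypothesis. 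Therefore $|\mathcal{S}_\tau|\le 2s$.

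For the norm estimate, observe first that $\x-\x\rvert_{\mathcal{S}_\tau}=\x\rvert_{\mathcal{S}_\tau^c}$, where $\mathcal{S}_\tau^c:=[K]\setminus\mathcal{S}_\tau$ consists exactly of the indices $k$ with $|x_k|<\tau$. Letting $T_{2s}$ be the support of $\x_{2s}^\mathrm{opt}$, I would write $\mathcal{S}_\tau^c=(\mathcal{S}_\tau^c\cap T_{2s})\cup(\mathcal{S}_\tau^c\setminus T_{2s})$ (a disjoint union) and apply the triangle inequality to the induced decomposition of $\x\rvert_{\mathcal{S}_\tau^c}$. On $\mathcal{S}_\tau^c\setminus T_{2s}\subseteq[K]\setminus T_{2s}$ the restriction is dominated by the full tail, so $\|\x\rvert_{\mathcal{S}_\tau^c\setminus T_{2s}}\|_2\le\|\x-\x_{2s}^\mathrm{opt}\|_2$. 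On $\mathcal{S}_\tau^c\cap T_{2s}$ there are at most $|T_{2s}|=2s$ indices, each of magnitude strictly below $\tau$, so $\|\x\rvert_{\mathcal{S}_\tau^c\cap T_{2s}}\|_2\le\sqrt{2s}\,\tau$. Adding the two bounds yields the claim. (One could equally well combine the two pieces through $\sqrt{a^2+b^2}\le a+b$, but the direct triangle-inequality version is cleaner.)

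I do not expect a serious obstacle; both steps are elementary once the sets are organized correctly. The only point demanding care is the degenerate threshold $\tau=0$, which forces the tail to vanish and hence $\x$ to be exactly $s$-sparse: in that case $\mathcal{S}_0=[K]$ can exceed $2s$ when $K>2s$, so the counting step should be read for $\tau>0$, which is automatic in every application (there $\tau$ is a strictly positive threshold derived from the data). The norm estimate, by contrast, holds for any $\tau\ge 0$.
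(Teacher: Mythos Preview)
Your argument is correct. The organization differs slightly from the paper's: the paper first orders the entries and shows $\tau\ge |x_{k_{2s}}|$, from which it deduces the inclusion $\mathcal{S}_\tau\subset\supp(\x_{2s}^{\mathrm{opt}})$; this single inclusion then yields both the cardinality bound and, via the triangle inequality $\|\x-\x\rvert_{\mathcal{S}_\tau}\|_2\le\|\x-\x_{2s}^{\mathrm{opt}}\|_2+\|\x_{2s}^{\mathrm{opt}}-\x\rvert_{\mathcal{S}_\tau}\|_2$, the norm estimate. You instead treat the two claims independently, getting the cardinality bound by a contradiction/counting argument on $\mathcal{S}_\tau\setminus T$ and the norm estimate by splitting $\mathcal{S}_\tau^c$ against $T_{2s}$. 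The resulting decompositions coincide once one knows $\mathcal{S}_\tau\subset T_{2s}$, so the two proofs are really the same computation read in two directions; your version has the minor advantage that the norm step does not rely on that inclusion and hence sidesteps any fussiness about ties in the choice of $\x_{2s}^{\mathrm{opt}}$. Your caveat about $\tau=0$ is accurate and matches the paper's implicit usage.
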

\begin{proof}
	Ordering the entries of $ \x $ in descending order (with ties broken arbitrarily) as $ |x_{ k_1 }| \geq |x_{ k_2 }| \geq \ldots $, we first note that
	\begin{equation*}
		\|\x - \x_s^\mathrm{opt}\|_1 \geq \sum_{ j = s + 1 }^{ 2s } |x_{ k_j }| \geq s |x_{ k_{ 2s } }|.
	\end{equation*}
	By assumption then, $ \tau \geq | x_{ k_{ 2s } }| $, and since $ \mathcal{S}_\tau$ contains the $ | \mathcal{S}_\tau| $-many largest entries of $ \x $, we must have $ \mathcal{S}_\tau \subset \supp (\x_{ 2s }^\mathrm{opt}) $.
	Note then that $ | \mathcal{S}_\tau | \leq 2s $.
	Finally, we calculate
	\begin{align*}
		\|\x - \x\rvert_{ \mathcal{S}_\tau }\|_2 
			&\leq \|\x - \x_{ 2s }^\mathrm{opt} \|_2 + \|\x_{ 2s }^\mathrm{opt} - \x_{ \mathcal{S}_\tau }\|_2 \\
			&\leq \|\x - \x_{ 2s }^\mathrm{opt} \|_2 + \sqrt{ \sum_{ k \in \supp (\x_{ 2s }^\mathrm{opt}) \setminus \mathcal{S}_\tau } x_k^2 } \\
			&\leq \|\x - \x_{ 2s }^\mathrm{opt} \|_2 + \tau \sqrt{ 2s }
	\end{align*}
	completing the proof.
\end{proof}

\begin{thm}[Robust sublinear-time, nonequispaced SFT: \cite{Iwen13}, Theorem 7/\cite{Mehri2019}, Lemma 4]
	\label{thm:1dSFT}
	For a signal $ a \in \mathcal{W}(\mathbbm{ T }) \cap C( \mathbbm{ T } ) $ corrupted by some arbitrary noise $ \mu: \mathbbm{ T } \rightarrow \C $, Algorithm 3 of \cite{Iwen13}, denoted $ \mathcal{A}_{ 2s, M }^{ \mathrm{sub} } $, will output a $ 2s $-sparse coefficient vector $ \v \in \C^M $ which
	\begin{enumerate}
		\item \label{it:1dSFT:FindSignificantFrequencies}
			reconstructs every frequency of $ \hat \a \in \C^M $, $ \omega \in \mathcal{B}_M $, with corresponding Fourier coefficients meeting the tolerance
			\begin{equation*}
				| \hat a_\omega | > (4 + 2 \sqrt{ 2 }) \left( \frac{\| \hat \a - \hat \a_{ s }^\mathrm{opt}\|_1}{s} + \| \hat a - \hat \a \|_1 + \| \mu \|_\infty \right),
			\end{equation*}
		\item \label{it:1dSFT:InftyEstimate} satisfies the $ \ell^\infty $ error estimate for recovered coefficients
			\begin{equation*}
				\| (\hat \a - \v)\rvert_{ \supp (\v) } \|_\infty \leq \sqrt{ 2 } \left( \frac{\| \hat \a - \hat \a_{ s }^\mathrm{opt}\|_1}{s} + \| \hat a - \hat \a \|_1 + \| \mu \|_\infty \right),
			\end{equation*}
		\item \label{it:1dSFT:2Estimate} satisfies the $ \ell^2 $ error estimate
			\begin{equation*}
				\| \hat \a - \v \|_2 \leq \| \hat \a - \hat \a_{ 2s }^\mathrm{opt} \|_2 + \frac{(8 \sqrt{ 2 } + 6)\| \hat \a - \hat \a_s^\mathrm{opt} \|_1}{\sqrt{ s }} + (8 \sqrt{ 2 } + 6)\sqrt{ s } (\| \hat a - \hat \a \|_1 + \| \mu \|_\infty ),
			\end{equation*}
		\item \label{it:1dSFT:Complexity} and the number of required samples of $ a $ and the operation count for $ \mathcal{A}_{ 2s, M }^{ \mathrm{sub} } $ are
			\begin{equation*}
				\mathcal{O} \left( \frac{s^2\log^4 M}{\log s}  \right).
			\end{equation*}
	\end{enumerate}
	The Monte Carlo variant of $ \mathcal{A}_{ 2s, M }^\mathrm{sub} $, denoted $ \mathcal{A}_{ 2s, M }^\mathrm{sub, MC} $, referred to by Corollary 4 of \cite{Iwen13} satisfies all of the conditions (1) -- (3) simultaneously with probability $ (1 - \sigma) \in [2/3, 1) $ and has number of required samples and operation count
	\begin{equation*}
		\mathcal{O} \left( s \log^3(M) \log \left( \frac{M}{\sigma} \right) \right).
	\end{equation*}
	The samples required by $ \mathcal{A}_{ 2s, M }^\mathrm{sub, MC} $ are a subset of those required by $ \mathcal{A}_{ 2s, M }^\mathrm{sub} $.
\end{thm}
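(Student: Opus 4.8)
The plan is to treat parts \ref{it:1dSFT:FindSignificantFrequencies}, \ref{it:1dSFT:InftyEstimate}, and \ref{it:1dSFT:Complexity} as essentially inherited from the analysis of Algorithm 3 in \cite{Iwen13}, and to derive the $\ell^2$ bound in part \ref{it:1dSFT:2Estimate} as the genuinely new ingredient by feeding parts \ref{it:1dSFT:FindSignificantFrequencies}--\ref{it:1dSFT:InftyEstimate} into Lemma \ref{lem:ErrorInTauSigFrequencies}. The first conceptual step is to recognize that the gap between the true Fourier series of $a$ and its band-limited restriction to $\mathcal{B}_M$ acts exactly like additional additive noise: since $\hat\a_\omega = \hat a_\omega$ for $\omega \in \mathcal{B}_M$, the discarded high-frequency mass $\|\hat a - \hat\a\|_1$ together with $\|\mu\|_\infty$ forms the effective noise floor. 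Invoking Theorem 7 of \cite{Iwen13} with this combined noise level yields the identification tolerance of part \ref{it:1dSFT:FindSignificantFrequencies} and the per-coefficient $\ell^\infty$ guarantee of part \ref{it:1dSFT:InftyEstimate} directly, while part \ref{it:1dSFT:Complexity} is merely the sample/operation count of that algorithm.

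With these in hand, I would prove part \ref{it:1dSFT:2Estimate} as follows. Write $\tau := (4 + 2\sqrt2)\big(\tfrac{\|\hat\a - \hat\a_s^\mathrm{opt}\|_1}{s} + \|\hat a - \hat\a\|_1 + \|\mu\|_\infty\big)$ for the tolerance appearing in part \ref{it:1dSFT:FindSignificantFrequencies}. Because $4 + 2\sqrt2 > 1$, this $\tau$ satisfies the hypothesis $\tau \geq \|\hat\a - \hat\a_s^\mathrm{opt}\|_1/s$ of Lemma \ref{lem:ErrorInTauSigFrequencies} applied to $\x = \hat\a$, so that $\mathcal{S}_\tau = \{\omega \in \mathcal{B}_M \mid |\hat\a_\omega| \geq \tau\}$ has at most $2s$ elements and
\begin{equation*}
	\|\hat\a - \hat\a\rvert_{\mathcal{S}_\tau}\|_2 \leq \|\hat\a - \hat\a_{2s}^\mathrm{opt}\|_2 + \tau\sqrt{2s}.
\end{equation*}
Part \ref{it:1dSFT:FindSignificantFrequencies} guarantees that every above-tolerance frequency is recovered, i.e.\ $\supp(\v) \supseteq \mathcal{S}_\tau$ (up to the boundary set $\{|\hat\a_\omega| = \tau\}$), so splitting $\hat\a - \v$ over $\supp(\v)$ and its complement gives
\begin{equation*}
	\|\hat\a - \v\|_2 \leq \|\hat\a\rvert_{\supp(\v)^c}\|_2 + \|(\hat\a - \v)\rvert_{\supp(\v)}\|_2 \leq \|\hat\a - \hat\a\rvert_{\mathcal{S}_\tau}\|_2 + \sqrt{2s}\,\|(\hat\a - \v)\rvert_{\supp(\v)}\|_\infty.
\end{equation*}
Bounding the first summand by the displayed Lemma estimate and the second by part \ref{it:1dSFT:InftyEstimate} with $|\supp(\v)| \leq 2s$ collapses everything into the leading term $\|\hat\a - \hat\a_{2s}^\mathrm{opt}\|_2$ plus a multiple of $\sqrt{s}\big(\tfrac{\|\hat\a - \hat\a_s^\mathrm{opt}\|_1}{s} + \|\hat a - \hat\a\|_1 + \|\mu\|_\infty\big)$; the contributing constants are $(4+2\sqrt2)\sqrt2 = 4\sqrt2+4$ from the tail and $\sqrt2\cdot\sqrt2 = 2$ from the estimation error, so the clean split already gives coefficient $4\sqrt2+6$, comfortably dominated by the stated $8\sqrt2+6$. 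Rewriting $\sqrt s\cdot\tfrac{\|\hat\a - \hat\a_s^\mathrm{opt}\|_1}{s} = \tfrac{\|\hat\a - \hat\a_s^\mathrm{opt}\|_1}{\sqrt s}$ then produces the claimed inequality.

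Finally, the Monte Carlo variant follows by repeating this argument verbatim but starting from Corollary 4 of \cite{Iwen13} in place of its Theorem 7: that corollary provides the identical identification and $\ell^\infty$ guarantees, and hence (via the same Lemma \ref{lem:ErrorInTauSigFrequencies} computation) the same $\ell^2$ bound, now on a single event of probability at least $1-\sigma$, at the reduced cost $\mathcal{O}(s\log^3(M)\log(M/\sigma))$; the subset claim on the samples is inherited from the construction there. I expect the main obstacle to be the bookkeeping around the thresholding set, namely checking that the strict inequality in the recovery tolerance of part \ref{it:1dSFT:FindSignificantFrequencies} is compatible with the non-strict threshold defining $\mathcal{S}_\tau$, so that $\supp(\v) \supseteq \mathcal{S}_\tau$ genuinely holds and the measure-zero boundary $\{|\hat\a_\omega| = \tau\}$ can be absorbed; this boundary slack is precisely what lets one state the looser-but-clean constant $8\sqrt2 + 6$ rather than the tighter $4\sqrt2 + 6$ the split itself yields.
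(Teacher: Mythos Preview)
Your derivation of part~\ref{it:1dSFT:2Estimate} from parts~\ref{it:1dSFT:FindSignificantFrequencies} and~\ref{it:1dSFT:InftyEstimate} via Lemma~\ref{lem:ErrorInTauSigFrequencies} is valid (and in fact yields the sharper constant $4\sqrt{2}+6$ you noticed), but there is a genuine gap earlier: part~\ref{it:1dSFT:FindSignificantFrequencies} with the constant $(4+2\sqrt{2})$ is \emph{not} a direct consequence of \cite[Theorem~7]{Iwen13}. What \cite[Lemma~6]{Iwen13} (with the noise modification of \cite[Lemma~4]{Mehri2019}) actually gives is that every $\omega$ with $|\hat a_\omega| > 4\delta$, where $\delta := \|\hat\a-\hat\a_s^\mathrm{opt}\|_1/s + \|\hat a-\hat\a\|_1 + \|\mu\|_\infty$, is \emph{identified} and stored with an estimate $x_\omega$ satisfying $|x_\omega-\hat a_\omega|\le\sqrt{2}\,\delta$. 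The algorithm then outputs only the $2s$ largest-magnitude entries of $\mathbf{x}$, not of $\hat\a$; a frequency $\omega\in\mathcal{S}_{4\delta}$ can therefore be discarded in favor of some $\omega'\notin\mathcal{S}_{4\delta}$ whose estimate $x_{\omega'}$ happens to be larger. The paper supplies the missing step: if this happens, then $|\hat a_{\omega'}|+\sqrt{2}\delta \ge |x_{\omega'}| \ge |x_\omega| \ge |\hat a_\omega|-\sqrt{2}\delta$ forces $|\hat a_\omega|\le(4+2\sqrt{2})\delta$, so only frequencies in the gap $4\delta<|\hat a_\omega|\le(4+2\sqrt{2})\delta$ can be lost. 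This bumping argument is precisely what produces the constant $(4+2\sqrt{2})$ in part~\ref{it:1dSFT:FindSignificantFrequencies}, and you have skipped it.

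Once that is fixed, it is worth noting that the paper's own route to part~\ref{it:1dSFT:2Estimate} differs from yours: it works with $\mathcal{S}_{4\delta}$ rather than $\mathcal{S}_{(4+2\sqrt{2})\delta}$, splitting off an extra term $\|\hat\a\rvert_{\mathcal{S}_{4\delta}\setminus\supp(\v)}\|_2$ bounded via the bumping conclusion above. This is what accounts for the stated constant $8\sqrt{2}+6$. Your approach---applying Lemma~\ref{lem:ErrorInTauSigFrequencies} directly at the final threshold $(4+2\sqrt{2})\delta$ and using $\mathcal{S}_\tau\subseteq\supp(\v)$---is cleaner and legitimately gives $4\sqrt{2}+6$, so the ``boundary slack'' you worried about is not the source of the discrepancy; the paper simply took a slightly less efficient decomposition. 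The strict-versus-nonstrict threshold issue you flag is harmless: replacing $\mathcal{S}_\tau$ by $\{\omega:|\hat a_\omega|>\tau\}$ in the lemma's proof still yields the same bound, since every entry in $\supp(\hat\a_{2s}^\mathrm{opt})\setminus\{\omega:|\hat a_\omega|>\tau\}$ has magnitude at most $\tau$.
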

\begin{proof}
	We refer to \cite[Theorem 7]{Iwen13} and its modification for noise robustness in \cite[Lemma 4]{Mehri2019} for the proofs of properties \eqref{it:1dSFT:InftyEstimate} and \eqref{it:1dSFT:Complexity}.
	As for \eqref{it:1dSFT:FindSignificantFrequencies}, \cite[Lemma 6]{Iwen13} and its modification in \cite[Lemma 4]{Mehri2019} imply that any $ \omega \in \mathcal{B}_M $ with $ |\hat a_\omega| > 4 (\|\hat \a - \hat \a_s^\mathrm{opt}\|_1 / s + \|\hat a - \hat \a \|_1 + \| \mu \|_\infty) =: 4\delta $ will be identified in \cite[Algorithm 3]{Iwen13}. 
	An approximate Fourier coefficient for these and any other recovered frequencies is stored in the vector $ \x $ which satisfies the same estimate in property \eqref{it:1dSFT:InftyEstimate} by the proof of \cite[Theorem 7]{Iwen13} and \cite[Lemma 4]{Mehri2019}.
	However, only the $ 2s $ largest magnitude values of $ \x $ will be returned in $ \v $.
	We therefore analyze what happens when some of the potentially large Fourier coefficients corresponding to frequencies in $ \mathcal{S}_{ 4\delta } $ do not have their approximations assigned to $ \v $.

	For the definition of $ \mathcal{S}_\tau $ in Lemma~\ref{lem:ErrorInTauSigFrequencies} applied to $ \hat \a $, we must have $ | \mathcal{S}_{ 4\delta }| \leq 2s = |\supp(\v)|$.
	If $ \omega \in \mathcal{S}_{ 4\delta } \setminus \supp(\v) $, there must then exist some other $ \omega' \in \supp(\v) \setminus \mathcal{S}_{ 4\delta } $ which was identified and took the place of $ \omega $ in $ \supp(\v) $.
	For this to happen, $ |\hat a_{ \omega' }| \leq 4\delta $ and $ |x_{ \omega' }| \geq |x_{ \omega }| $.
	But by property \eqref{it:1dSFT:InftyEstimate} (extended to all coefficients in $ \x $), we know
	\begin{equation*}
		4 \delta + \sqrt{ 2 } \delta \geq |\hat a_{ \omega' }| + \sqrt{ 2 } \delta \geq |x_{ \omega' }| \geq |x_\omega| \geq |\hat a_\omega| - \sqrt{ 2 }\delta.
	\end{equation*}
	Thus, any frequency in $ \mathcal{S}_{ 4\delta } $ not chosen satisfies $ |\hat a_\omega| \leq (4 + 2\sqrt{ 2 }) \delta $, and so every frequency in $ \mathcal{S}_{ (4 + 2\sqrt{ 2 }) \delta } $ is in fact identified in $ \v $ verifying property \eqref{it:1dSFT:FindSignificantFrequencies}.

	As for property \eqref{it:1dSFT:2Estimate}, we estimate the $ \ell^2 $ error using property \eqref{it:1dSFT:InftyEstimate}, Lemma~\ref{lem:ErrorInTauSigFrequencies}, and the above argument as
	\begin{align*}
		\| \hat \a - \v \|_2 
			&\leq \| \hat \a - \hat \a\rvert_{ \supp(\v) } \|_2 + \|(\hat \a - \v )\rvert_{ \supp(\v) } \|_2 \\
			&\leq \| \hat \a - \hat \a\rvert_{ \mathcal{S}_{ 4\delta } \cap \supp (\v) } \|_2 + \sqrt{ 2 } \delta \sqrt{ 2 s }\\
			&\leq \| \hat \a - \hat \a\rvert_{ \mathcal{S}_{ 4\delta } } \|_2 + \| \hat \a\rvert_{ \mathcal{S}_{ 4\delta } \setminus \supp(\v) }\|_2 + 2 \delta \sqrt{ s } \\
			&\leq \| \hat \a - \hat \a_{ 2s }^\mathrm{opt} \|_2 + 4 \delta \sqrt{ 2s } + (4 + 2\sqrt{ 2 }) \delta \sqrt{ 2s } + 2 \delta \sqrt{ s }\\
			&= \| \hat \a - \hat \a_{ 2s }^\mathrm{opt} \|_2 + (8 \sqrt{ 2 } + 6) \sqrt{ s } \delta
	\end{align*}
	as desired.
\end{proof}

\begin{rem}
	In the noiseless case, if the univariate function~$ a $ is Fourier $ s $-sparse, i.e., is a trigonometric polynomial and $ M $ is large enough such that $ \supp (\hat a) \subset \mathcal{B}_M $, both $ \mathcal{A}_{ 2s, M }^\mathrm{sub} $ and $ \mathcal{A}_{ 2s, M }^\mathrm{sub, MC} $ will exactly recover $ \hat \a $ (the latter with probability $ 1 - \sigma$), and therefore $ \hat a $.
	In particular, note that the output of either algorithm will then actually be $ s $-sparse.
\end{rem}

Using the above SFT algorithm with the discretization process outlined in \cite{Mehri2019} leads to a fully \emph{discrete} sparse Fourier transform, requiring only equispaced samples of $ a $.
However, rather than separately accounting for the truncation to the frequency band $ \mathcal{B}_M $ as above, the equispaced samples allow us to take advantage of aliasing, which is particularly important when we apply the algorithm along reconstructing rank-1 lattices.
Thus, instead of approximating $ \hat \a \in \C^M $, the restriction of $ \hat a $ to $ \mathcal{B}_M $, as above, we prefer to approximate the discrete Fourier transform of $ \a $.
We now review how these notions of restrictions versus aliasing interact.

\begin{lem}
	\label{lem:DFTEqualsFT}
	Let $ a \in C( \mathbbm{ T } ) $ be bandlimited with $ \supp (\hat a) \subset \mathcal{B}_M $.
	Then $ \hat \a = \F_M \, \a  $.
\end{lem}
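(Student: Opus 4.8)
The plan is to exploit the fact that the bandlimiting hypothesis makes $a$ a trigonometric polynomial, so that the entire computation reduces to a finite interchange of sums followed by the standard discrete orthogonality relation. Since $\supp(\hat a) \subset \mathcal{B}_M$ and $\mathcal{B}_M$ is a finite set of exactly $M$ consecutive integers, the Fourier series of $a$ collapses to the finite sum $a(t) = \sum_{\omega \in \mathcal{B}_M} \hat a_\omega \, \mathbbm{e}^{2\pi\mathbbm{i}\omega t}$, with no convergence subtleties to track.

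First I would fix a target frequency $\xi \in \mathcal{B}_M$ and write out $(\F_M\,\a)_\xi = \frac{1}{M}\sum_{j\in[M]} a(j/M)\,\mathbbm{e}^{-2\pi\mathbbm{i}\xi j/M}$ directly from the definition of $\F_M$. Substituting the finite Fourier representation of $a$ evaluated at the equispaced nodes $t = j/M$ and interchanging the two finite sums yields $(\F_M\,\a)_\xi = \sum_{\omega\in\mathcal{B}_M}\hat a_\omega \cdot \frac{1}{M}\sum_{j\in[M]}\mathbbm{e}^{2\pi\mathbbm{i}(\omega-\xi)j/M}$.

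The crux is then the geometric-sum identity asserting that $\frac{1}{M}\sum_{j\in[M]}\mathbbm{e}^{2\pi\mathbbm{i}(\omega-\xi)j/M}$ equals $1$ when $\omega \equiv \xi \imod M$ and $0$ otherwise; I would verify this by summing the geometric series and noting that its numerator vanishes unless $M \mid (\omega-\xi)$, while the summand is identically $1$ in the divisible case. Because both $\omega$ and $\xi$ lie in $\mathcal{B}_M$, which is a complete residue system modulo $M$, the congruence $\omega \equiv \xi \imod M$ forces the stronger conclusion $\omega = \xi$. Hence only the $\omega = \xi$ term survives, giving $(\F_M\,\a)_\xi = \hat a_\xi = (\hat\a)_\xi$, and since $\xi \in \mathcal{B}_M$ was arbitrary this establishes $\hat\a = \F_M\,\a$.

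There is no genuine obstacle here: the result is the elementary aliasing fact that the DFT reproduces exactly the Fourier coefficients of a function bandlimited to $\mathcal{B}_M$. The only point demanding a small amount of care is the final residue-system step, where one must invoke that $\mathcal{B}_M$ was deliberately chosen as $M$ consecutive integers so that distinct elements are never congruent modulo $M$. This is precisely what prevents any aliasing collision and allows the congruence $\omega \equiv \xi \imod M$ to be upgraded to the equality $\omega = \xi$.
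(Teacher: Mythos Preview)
Your proof is correct and follows essentially the same route as the paper's: both expand $a$ as the finite trigonometric polynomial $\sum_{\omega\in\mathcal{B}_M}\hat a_\omega\,\mathbbm{e}^{2\pi\mathbbm{i}\omega t}$, substitute into the definition of $\F_M\,\a$, swap the two finite sums, and apply the discrete orthogonality relation together with the fact that $\mathcal{B}_M$ contains no two distinct residues modulo $M$. Your write-up is in fact slightly more explicit about why the congruence $\omega\equiv\xi\imod M$ upgrades to equality, but the argument is otherwise identical.
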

\begin{proof}
	Writing $ a(t) = \sum_{ \omega \in \mathcal{B}_M } \hat a_\omega \, \mathbbm{ e }^{ 2 \pi \mathbbm{ i } \omega t } $, for any $ \omega \in \mathcal{B}_M $, we calculate
	\begin{align*}
		(\F_M \, \a)_\omega 
            &= \frac{1}{M} \sum_{ j \in \mathcal{B}_M } a\left(\frac{j}{M}\right) \mathbbm{ e }^{ - 2 \pi \mathbbm{ i } \omega j / M }
			= \frac{1}{M} \sum_{ j \in \mathcal{B}_M } \left( \sum_{ \tilde \omega \in \mathcal{B}_M } \hat a_{ \tilde \omega } \, \mathbbm{ e }^{ 2 \pi \mathbbm{ i } \tilde \omega j / M } \right) \mathbbm{ e }^{ - 2 \pi \mathbbm{ i } \omega j / M } \\
			&= \frac{1}{M} \sum_{ j \in \mathcal{B}_M } \sum_{ \tilde \omega \in \mathcal{B}_M } \hat a_{ \tilde \omega } \, \mathbbm{ e }^{ 2 \pi \mathbbm{ i } (\tilde \omega - \omega) j / M }
			= \sum_{ \tilde \omega \in \mathcal{B}_M } \hat a_{ \tilde \omega } \, \delta_{ 0, (\tilde \omega - \omega \bmod M) } \\
			&= \hat a_{ \omega },
	\end{align*}
	as desired.
\end{proof}

\begin{lem}
	\label{lem:DFTAliasing}
	For any function $ a: \mathbbm{ T } \rightarrow \C $ with Fourier series $ a(t) = \sum_{ \omega \in \mathbbm{ Z } } \hat a_\omega \, \mathbbm{ e }^{ 2 \pi \mathbbm{ i } \omega t } $, define the aliased polynomial
	\begin{equation*}
		a_\mathrm{alias}(t) = \sum_{ \omega \in \mathcal{B}_M } \underbrace{\left( \sum_{ \tilde \omega \equiv \omega \imod{M} } \hat a_{ \tilde \omega } \right)}_{=:(\hat \a_\mathrm{alias})_{\omega}} \mathbbm{ e }^{ 2 \pi \mathbbm{ i } \omega t }.
	\end{equation*}
	Then the equispaced samples coincide, giving $ \a = \a_\mathrm{alias} \in \C^M $ and $ \hat \a_\mathrm{alias} = \F_M \, \a $.
\end{lem}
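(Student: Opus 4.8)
The plan is to reduce everything to the bandlimited case already settled in Lemma~\ref{lem:DFTEqualsFT}, exploiting the single structural fact that equispaced sampling at the nodes $t = j/M$ collapses each residue class of frequencies modulo $M$. Concretely, for any $\tilde\omega \in \mathbbm{Z}$, writing $\tilde\omega = \omega + mM$ with $\omega \in \mathcal{B}_M$ and $m \in \mathbbm{Z}$, the sampled exponential satisfies $\mathbbm{e}^{2\pi\mathbbm{i}\tilde\omega j/M} = \mathbbm{e}^{2\pi\mathbbm{i}\omega j/M}\,\mathbbm{e}^{2\pi\mathbbm{i}mj} = \mathbbm{e}^{2\pi\mathbbm{i}\omega j/M}$, since $mj \in \mathbbm{Z}$. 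This is the only computation driving the result.

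First I would establish the sample identity $\a = \a_\mathrm{alias}$. Fixing $j \in [M]$ and expanding $a(j/M)$ through its Fourier series, I would partition $\mathbbm{Z}$ into residue classes modulo $M$ and sum the terms class by class. By the exponential identity above, every term indexed by a frequency $\tilde\omega \equiv \omega \pmod M$ contributes $\hat a_{\tilde\omega}\,\mathbbm{e}^{2\pi\mathbbm{i}\omega j/M}$, so collecting these recovers precisely the coefficient $(\hat\a_\mathrm{alias})_\omega = \sum_{\tilde\omega \equiv \omega} \hat a_{\tilde\omega}$ multiplying $\mathbbm{e}^{2\pi\mathbbm{i}\omega j/M}$. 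Summing over $\omega \in \mathcal{B}_M$ yields $a(j/M) = a_\mathrm{alias}(j/M)$ for every $j \in [M]$, i.e., $\a = \a_\mathrm{alias}$.

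For the second claim I would simply observe that $a_\mathrm{alias}$ is, by construction, a trigonometric polynomial with $\supp(\hat a_\mathrm{alias}) \subset \mathcal{B}_M$ whose Fourier transform restricts on $\mathcal{B}_M$ to the vector $\hat\a_\mathrm{alias}$ named in the statement. Hence Lemma~\ref{lem:DFTEqualsFT} applies verbatim to $a_\mathrm{alias}$, giving $\hat\a_\mathrm{alias} = \F_M\,\a_\mathrm{alias}$, and substituting $\a = \a_\mathrm{alias}$ from the first step immediately delivers $\hat\a_\mathrm{alias} = \F_M\,\a$.

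The only genuine obstacle is analytic rather than algebraic: the regrouping of the Fourier series by residue classes rearranges an infinite series, which is legitimate only under absolute (hence unconditional) convergence. In our setting this is guaranteed since the functions considered lie in the Wiener algebra $\mathcal{W}(\mathbbm{T})$, so that $\sum_{\omega \in \mathbbm{Z}} |\hat a_\omega| < \infty$ and the interchange of summation is justified. I would flag this hypothesis at the outset; with it in hand, both the regrouping in step one and the finite-sum manipulation inherited from Lemma~\ref{lem:DFTEqualsFT} are entirely elementary.
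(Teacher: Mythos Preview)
Your proposal is correct and follows essentially the same approach as the paper: group the Fourier series by residue classes modulo $M$ to show $\a = \a_\mathrm{alias}$, then invoke Lemma~\ref{lem:DFTEqualsFT} on the bandlimited polynomial $a_\mathrm{alias}$. Your explicit remark that the regrouping requires absolute summability (i.e., $a \in \mathcal{W}(\mathbbm{T})$) is a welcome clarification the paper leaves implicit.
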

\begin{proof}
	We group frequencies in the Fourier series of $ a $ by their residues in $ \mathcal{B}_M $, giving
	\begin{align*}
		\left( \a \right)_j
			&= \sum_{ \tilde \omega \in \mathbbm{ Z } } \hat a_{ \tilde \omega } \, \mathbbm{ e }^{ 2 \pi \mathbbm{ i } \tilde \omega j / M }
			= \sum_{ \omega \in \mathcal{B}_M } \sum_{ n \in \mathbbm{ Z } } \hat a_{ \omega + nM } \, \mathbbm{ e }^{ 2 \pi \mathbbm{ i } (\omega + nM)j / M }
			= \sum_{ \omega \in \mathcal{B}_M } \left( \sum_{ \tilde \omega \equiv \omega \imod{M} } \hat a_{ \tilde \omega } \right) \mathbbm{ e }^{ 2 \pi \mathbbm{ i } \omega j / M } \\
			&= \left( \a_\mathrm{alias} \right)_j \text{ for all } j \in [M].
	\end{align*}
	Now, since $ \supp (\hat \a_\mathrm{alias}) \subset \mathcal{B}_M $, Lemma~\ref{lem:DFTEqualsFT} implies $ \hat \a_\mathrm{alias} = \F_M \, \a_\mathrm{alias} = \F_M \, \a $.
\end{proof}

Eventually, we will consider techniques for approximation of arbitrary periodic functions rather than simply polynomials.
For this reason, we require noise-robust recovery results for the method in \cite{Mehri2019}.
The necessary modifications to account for this robustness as well as the improved guarantees carried over from the previous algorithm are given below.

\begin{thm}[Robust discrete sublinear-time  SFT: see \cite{Mehri2019}, Theorem 5]
	\label{thm:1dDiscreteSFT}
	For a signal $ a \in \mathcal{W}(\mathbbm{ T }) \cap C( \mathbbm{ T } ) $ corrupted by some arbitrary noise $ \mu: \mathbbm{ T } \rightarrow \C $, and $ 1 \leq r \leq \frac{M}{36} $ Algorithm 1 of \cite{Mehri2019}, denoted $ \mathcal{A}_{ 2s, M }^{ \mathrm{disc} } $, will output a $ 2s $-sparse coefficient vector $ \v \in \C^M $ which
	\begin{enumerate}
		\item \label{it:1dDiscreteSFT:FindSignificantFrequencies} reconstructs every frequency of $ \F_M \, \a \in \C^M $, $ \omega \in \mathcal{B}_M $, with corresponding aliased Fourier coefficient meeting the tolerance
			\begin{equation*}
				| (\F_M \, \a)_\omega | > 12(1 + \sqrt{ 2 }) \left( \frac{\| \F_M \, \a - (\F_M \, \a)_s^\mathrm{opt}\|_1}{2s} + 2( \| \a \|_\infty M^{ -r } + \| \boldsymbol{\mu} \|_{ \infty } )\right),
			\end{equation*}
		\item \label{it:1dDiscreteSFT:InftyEstimate} satisfies the $ \ell^\infty $ error estimate for recovered coefficients
			\begin{equation*}
				\| (\F_M \, \a - \v)\rvert_{ \supp (\v) } \|_\infty \leq 3 \sqrt{ 2 } \left( \frac{\| \F_M \, \a- (\F_M \, \a)_s^\mathrm{ opt }\|_1}{2s} + 2 (\| \a \|_\infty M^{ -r } + \| \boldsymbol{\mu} \|_\infty)  \right),
			\end{equation*}
		\item \label{it:1dDiscreteSFT:2Estimate} satisfies the $ \ell^2 $ error estimate
			\begin{equation*}
				\| \F_M \, \a - \v \|_2 \leq \| \F_M \, \a - (\F_M \, \a)_{ 2s }^\mathrm{opt} \|_2 + 38 \frac{\| \F_M \, \a - (\F_M \, \a)_s^\mathrm{opt} \|_1}{\sqrt{ s }} + 152 \sqrt{ s } (\| \a \|_\infty M^{ -r } +  \| \boldsymbol{\mu} \|_\infty  ),
			\end{equation*}
		\item \label{it:1dDiscreteSFT:Complexity} and the number of required samples of $ \a $ and the operation count for $ \mathcal{A}_{ 2s, M }^\mathrm{disc} $ are
			\begin{equation*}
				\mathcal{O} \left( \frac{s^2 r^{ 3/2 } \log^{ 11/2 } M}{\log s}  \right).
			\end{equation*}
	\end{enumerate}
	The Monte Carlo variant of $ \mathcal{A}_{ 2s, M }^\mathrm{disc} $, denoted $ \mathcal{A}_{ 2s, M }^\mathrm{disc, MC} $, satisfies the all of the conditions (1) -- (3) simultaneously with probability $ (1 - \sigma) \in [2/3, 1) $ and has number of required samples and operation count
	\begin{equation*}
		\mathcal{O} \left( s r^{ 3/2 }\log^{ 9/2 }(M) \log \left( \frac{M}{\sigma} \right) \right).
	\end{equation*}
\end{thm}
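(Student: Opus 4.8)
The plan is to mirror the proof of Theorem~\ref{thm:1dSFT}, replacing the restricted transform $\hat\a$ by the aliased transform $\F_M\,\a$ throughout. The discrete algorithm of \cite{Mehri2019} is built by running the nonequispaced routine of \cite{Iwen13} on values of $a$ that are themselves synthesized from equispaced samples via a periodized-Gaussian mollification; by Lemma~\ref{lem:DFTAliasing} the object it actually approximates is $\F_M\,\a = \hat\a_\mathrm{alias}$, and the mollification error is what produces the $\|\a\|_\infty M^{-r}$ contribution to the error floor. First I would quote \cite[Theorem 5]{Mehri2019} for the $\ell^\infty$ estimate~\eqref{it:1dDiscreteSFT:InftyEstimate} and the complexity~\eqref{it:1dDiscreteSFT:Complexity}, incorporating the external noise $\boldsymbol{\mu}$ exactly as \cite[Lemma 4]{Mehri2019} does for the nonequispaced method: since $\mu$ only ever enters through sup-norm bounds on the sampled values, it simply adds $\|\boldsymbol{\mu}\|_\infty$ alongside the mollification term, while the factor of $2$ and the $2s$-denominator in the floor are inherited from the aliased accounting of \cite{Mehri2019}. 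Abbreviate this floor as $\delta := \frac{\|\F_M\,\a - (\F_M\,\a)_s^\mathrm{opt}\|_1}{2s} + 2(\|\a\|_\infty M^{-r} + \|\boldsymbol{\mu}\|_\infty)$.

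For the identification claim~\eqref{it:1dDiscreteSFT:FindSignificantFrequencies}, the underlying energetic-frequency lemma of \cite{Mehri2019} (the discrete analog of \cite[Lemma 6]{Iwen13}) guarantees that every $\omega$ whose aliased coefficient exceeds a base threshold $T = \mathcal{O}(1)\cdot\delta$ is placed in the internal buffer $\x$, with the $\ell^\infty$ guarantee of \eqref{it:1dDiscreteSFT:InftyEstimate} holding for all of its entries. Since $\delta \geq \frac{\|\F_M\,\a - (\F_M\,\a)_s^\mathrm{opt}\|_1}{2s}$ and hence $T \geq 2\delta \geq \frac{\|\F_M\,\a - (\F_M\,\a)_s^\mathrm{opt}\|_1}{s}$, Lemma~\ref{lem:ErrorInTauSigFrequencies} applied to $\F_M\,\a$ gives $|\mathcal{S}_T| \leq 2s = |\supp(\v)|$. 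I would then repeat the exchange argument from the proof of Theorem~\ref{thm:1dSFT} verbatim: if a frequency $\omega \in \mathcal{S}_T$ is squeezed out of the top $2s$ by some $\omega'$ with $|x_{\omega'}| \geq |x_\omega|$ but $|(\F_M\,\a)_{\omega'}| \leq T$, then the $\ell^\infty$ bound $3\sqrt{2}\,\delta$ forces $|(\F_M\,\a)_\omega| \leq T + 6\sqrt{2}\,\delta$, which with the base threshold supplied by \cite{Mehri2019} is exactly the tolerance $12(1+\sqrt{2})\,\delta$ in \eqref{it:1dDiscreteSFT:FindSignificantFrequencies}.

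The $\ell^2$ estimate~\eqref{it:1dDiscreteSFT:2Estimate} then follows from the same telescoping used for Theorem~\ref{thm:1dSFT}. Splitting
\begin{equation*}
	\|\F_M\,\a - \v\|_2 \leq \|\F_M\,\a - (\F_M\,\a)\rvert_{\supp(\v)}\|_2 + \|(\F_M\,\a - \v)\rvert_{\supp(\v)}\|_2,
\end{equation*}
I would bound the on-support term by $\sqrt{2s}$ times the $\ell^\infty$ bound of \eqref{it:1dDiscreteSFT:InftyEstimate}, and control the off-support tail by isolating $\mathcal{S}_T \setminus \supp(\v)$, the significant frequencies that were dropped. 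The second conclusion of Lemma~\ref{lem:ErrorInTauSigFrequencies} (with $\tau = T$) gives $\|\F_M\,\a - (\F_M\,\a)\rvert_{\mathcal{S}_T}\|_2 \leq \|\F_M\,\a - (\F_M\,\a)_{2s}^\mathrm{opt}\|_2 + T\sqrt{2s}$, while each dropped coefficient is bounded by the previous paragraph's $12(1+\sqrt{2})\,\delta$. Collecting the three resulting multiples of $\sqrt{s}\,\delta$ gives a total coefficient of $42 + 24\sqrt{2} < 76$, and rewriting $\sqrt{s}\,\delta = \frac{\|\F_M\,\a - (\F_M\,\a)_s^\mathrm{opt}\|_1}{2\sqrt{s}} + 2\sqrt{s}(\|\a\|_\infty M^{-r} + \|\boldsymbol{\mu}\|_\infty)$ yields the stated constants $38$ and $152$.

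Finally, the Monte Carlo variant uses the randomized subsampling of the arithmetic-progression sample set from \cite{Iwen13,Mehri2019}, on whose success event (probability $1-\sigma$) the same three estimates hold simultaneously while the sample and operation counts drop from $s^2$ to $s$. I expect the only genuinely delicate step to be the $\ell^\infty$ bookkeeping of the floor $\delta$: verifying that the external noise and the mollification error combine into precisely the displayed floor, and in particular that $\|\a\|_\infty M^{-r}$ bounds the mollification error for every $1 \leq r \leq \frac{M}{36}$ via the decay of the periodized Gaussian established in \cite{Mehri2019}. Everything downstream of fixing $\delta$ is a direct transcription of the argument for Theorem~\ref{thm:1dSFT}.
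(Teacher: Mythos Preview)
Your high-level plan and all of the downstream arithmetic (the exchange argument producing the $12(1+\sqrt{2})\delta$ threshold, and the $\ell^2$ telescoping collecting to $42+24\sqrt{2}<76$) are correct and match the paper exactly. The gap is in the first step, where you write that you would ``quote \cite[Theorem~5]{Mehri2019} for the $\ell^\infty$ estimate~\eqref{it:1dDiscreteSFT:InftyEstimate} \ldots\ incorporating the external noise $\boldsymbol{\mu}$ exactly as \cite[Lemma~4]{Mehri2019} does for the nonequispaced method: since $\mu$ only ever enters through sup-norm bounds on the sampled values, it simply adds $\|\boldsymbol{\mu}\|_\infty$ alongside the mollification term.'' This is not how the noise enters the discrete algorithm, and \cite[Theorem~5]{Mehri2019} contains no $\|\boldsymbol{\mu}\|_\infty$ term at all; establishing the noise-robust floor $\delta$ is in fact the bulk of the paper's proof.

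Concretely, the discrete method of \cite{Mehri2019} feeds the nonequispaced routine values of the form $\frac{1}{N}\sum_{j=j'-\kappa}^{j'+\kappa}(f(y_j)+\mu_j)\,g(x-y_j)$, so the noise contribution at each synthesized sample point is $\frac{1}{N}\sum_{|k|\le\kappa}\mu_{j'+k}\,g(x-y_{j'+k})$, not simply $\mu$ evaluated at one point. To bound this by $2\|\boldsymbol{\mu}\|_\infty$ one must show $\frac{1}{N}\sum_{|k|\le\kappa}g(x-y_{j'+k})<2$, which is a genuine estimate on the truncated periodized Gaussian requiring pointwise control of $g$ on both sides of the nearest grid point and a Riemann-sum comparison; the paper carries this out explicitly (along with a slight sharpening of the mollification constant from $3$ to $2$) and then replaces every occurrence of $3\|\f\|_\infty N^{-r}$ in the \cite{Mehri2019} analysis by $2(\|\f\|_\infty N^{-r}+\|\boldsymbol{\mu}\|_\infty)$. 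Only after this is done can one invoke the exchange and telescoping arguments you describe. Your closing remark correctly flags the floor as ``the only genuinely delicate step,'' but its content is the Gaussian-sum bound just described, not merely the decay of the mollification error.
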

\begin{proof}
	All notation in this proof matches that in \cite{Mehri2019} (in particular, we use $ f $ to denote the one-dimensional function in place of $ a $ in the theorem statement and $ N = 2M + 1$).
	We begin by substituting the $ 2\pi $-periodic gaussian filter given in (3) on page 756 with the $ 1 $-periodic gaussian and associated Fourier transform
	\begin{equation*}
		g(x) = \frac{1}{c_1} \sum_{ n = -\infty }^\infty \mathbbm{ e }^{ - \frac{(2\pi)^2(x - n)^2}{2 c_1^2}  },\quad
		\hat g_\omega = \frac{1}{\sqrt{ 2\pi }} \mathbbm{ e }^{ - \frac{c_1^2 \omega^2}{2} }.
	\end{equation*}
	Note then that all results regarding the Fourier transform remain unchanged, and since this $ 1 $-periodic gaussian is a just a rescaling of the $ 2\pi $-periodic one used in \cite{Mehri2019}, the bound in \cite[Lemma 1]{Mehri2019} holds with a similarly compressed gaussian, that is, for all $ x \in \left[ -\frac{1}{2}, \frac{1}{2} \right] $
	\begin{equation}
		\label{eq:GaussianBound}
		g(x) \leq \left( \frac{3}{c_1} + \frac{1}{\sqrt{ 2\pi }} \right) \mathbbm{ e }^{ -\frac{ \left( 2\pi x \right)^2 }{2 c_1^2} }.
	\end{equation}
	Analogous results up to and including \cite[Lemma 10]{Mehri2019} for $ 1 $-periodic functions then hold straightforwardly.

	Assuming that our signal measurements $ \f = (f(y_j))_{ j = 0 }^{ 2M } = (f( \frac{j}{N} ))_{ j = 0 }^{ 2M } $ are corrupted by some discrete noise $ \boldsymbol{\mu} = (\mu_j)_{ j = 0 }^{ 2M } $, we consider for any $ x \in \mathbbm{T} $ a similar bound to \cite[Lemma 10]{Mehri2019}.
	Here, $ j' := \arg \min_{ j }| x - y_j| $ and $ \kappa := \left \lceil \gamma \ln N \right \rceil + 1 $ for some $ \gamma \in \mathbbm{ R }^+ $ to be determined.
	Then,
	\begin{align*}
		&\left| \frac{1}{N} \sum_{ j = 0 }^{ 2M } f(y_j) g(x - y_j) - \frac{1}{N}  \sum_{ j = j' - \kappa }^{ j' + \kappa } (f(y_j) + \mu_j) g(x - y_j) \right| \\
		&\qquad \leq \frac{1}{N} \left| \sum_{ j = 0 }^{ 2M } f(y_j) g( x-y_j ) - \sum_{ j = j' - \kappa}^{ j'+\kappa } f(y_j) g(x - y_j) \right| + \frac{1}{N} \left| \sum_{ j = j' - \kappa }^{ j' + \kappa } \mu_j g(x - y_j) \right| \\
		&\qquad \leq \frac{1}{N} \left| \sum_{ j = 0 }^{ 2M } f(y_j) g( x-y_j ) - \sum_{ j = j' - \kappa}^{ j'+\kappa } f(y_j) g(x - y_j) \right| + \frac{1}{N} \| \boldsymbol{\mu} \|_\infty \sum_{ k = - \kappa }^{ \kappa } g(x - y_{ j' + k })
	\end{align*}
	We bound the first term in this sum by a direct application of \cite[Lemma 10]{Mehri2019}; however, we take this opportunity to reduce the constant in the bound given there.
	In particular, bounding this term by the final expression in the proof of \cite[Lemma 10]{Mehri2019} and using our implicit assumption that $ 36 \leq N $, we have
	\begin{equation}
		\label{eq:NoisyConvolutionBound}
		\begin{aligned}
			&\left| \frac{1}{N} \sum_{ j = 0 }^{ 2M } f(y_j) g(x - y_j) - \frac{1}{N}  \sum_{ j = j' - \kappa }^{ j' + \kappa } (f(y_j) + \mu_j) g(x - y_j) \right| \\
			&\qquad\qquad \leq \left( \frac{3}{\sqrt{ 2\pi }} + \frac{1}{2\pi} \sqrt{ \frac{\ln 36}{36} } \right) \| \mathbf{f} \|_\infty N^{ -r } + \frac{1}{N} \| \boldsymbol{\mu} \|_\infty \sum_{ k = - \kappa }^{ \kappa } g(x - y_{ j' + k }).
		\end{aligned}
	\end{equation}

	We now work on bounding the second term.
	First note that for all $ k \in [-\kappa, \kappa] \cap \mathbbm{ Z } $,
	\begin{align*}
		g(x-y_{ j' \pm k }) 
			&= g\left(x - y_{ j' } \pm \frac{k}{N}\right).
	\end{align*}
	Assuming without loss of generality that $ 0 \leq x - y_{ j' } $, we can bound the nonnegatively indexed summands by \eqref{eq:GaussianBound} as
	\begin{align}
		\label{eq:NonnegativeSummands}
		g\left(x - y_{ j' } + \frac{k}{N}\right)
			&\leq \left(\frac{3}{c_1} + \frac{2}{\sqrt{ 2 \pi }}\right) \mathbbm{ e }^{ - \frac{(2 \pi)^2k^2}{2 c_1^2N^2} }.
	\end{align}
	For the negatively indexed summands, the definition of $ j' = \arg \min_j |x - y_j| $ implies that $ x - y_{ j' } \leq \frac{1}{2N} $.
	In particular,
	\begin{equation*}
		x - y_{ j' } - \frac{k}{N} \leq \frac{1 - 2k}{2N} < 0 \implies \left(x - y_{ j' } - \frac{k}{N} \right)^2 \geq \frac{1 - 2k}{2N} \left( x- y_{ j' } - \frac{k}{N}  \right) \geq \frac{2k - 1}{2N} \cdot \frac{k}{N}  ,
	\end{equation*}
	giving
	\begin{align}
		\label{eq:NegativeSummands}
		g \left( x - y_{ j' } - \frac{k}{N} \right) \leq \left( \frac{3}{c_1} + \frac{2}{\sqrt{ 2\pi }} \right) \mathbbm{ e }^{ - \frac{(2\pi)^2k^2}{2c_1^2N^2} } \mathbbm{ e }^{ \frac{(2\pi)^2k}{4c_1^2N^2} }.
	\end{align}

	We now bound the final exponential. 
	We first recall from \cite{Mehri2019} the choices of parameters 
	\begin{gather*}
		c_1 = \frac{\beta \sqrt{ \ln N }}{N}, \quad  \kappa = \left \lceil \gamma \ln N \right \rceil + 1, \quad \gamma = \frac{6r}{\sqrt{ 2 }\pi} = \frac{\beta \sqrt{ r}}{2 \sqrt{ \pi }}, \quad \beta = 6 \sqrt{ r }, \quad\textrm{where } 1 \leq r \leq \frac{N}{36}.
	\end{gather*}
	For $ k \in [1, \kappa] \cap \mathbbm{ Z } $ then,
	\begin{align*}
		\exp \left( \frac{(2\pi)^2k}{4c_1^2N^2} \right) 
			&\leq \exp \left( \frac{(2\pi)^2\kappa}{4c_1^2N^2} \right) \\
			&\leq \exp \left( \frac{\pi^2\left( \frac{6r \ln N}{\sqrt{ 2 }\pi} + 2\right)}{36 r \ln N} \right) \\
			&\leq \exp \left( \frac{\pi}{6\sqrt{ 2 }} + \frac{\pi^2}{18r \ln N} \right) \\
			&\leq \exp \left( \frac{\pi}{6\sqrt{ 2 }} + \frac{\pi^2}{18 \ln 36} \right) =: A.
	\end{align*}

	Combining this with our bounds for the nonnegatively indexed summands \eqref{eq:NonnegativeSummands} and the negatively indexed summands \eqref{eq:NegativeSummands}, we have
	\begin{equation*}
	\frac{1}{N} \sum_{ k = -\kappa }^\kappa g(x - y_{ j'+ k }) \leq \left(\frac{3}{\beta \sqrt{ \ln N }} + \frac{1}{N \sqrt{ 2 \pi }}\right) \left( 1 + (1 + A) \sum_{ k = 1 }^\kappa \mathbbm{ e }^{ - \frac{(2\pi)^2 k^2}{2 \beta^2 \ln N}  } \right)
	\end{equation*}
	Expressing the final sum as a truncated lower Riemann sum and applying a change of variables on the resulting integral, we have
	\begin{equation*}
		\sum_{ k = 1 }^\kappa \mathbbm{ e }^{ - \frac{(2\pi)^2 k^2}{2 \beta^2 \ln N} } \leq \frac{\beta \sqrt{ \ln N }}{\sqrt{ 2 }\pi} \int_{ 0 }^{ \infty } e^{ -x^2 } \,d x = \frac{\beta \sqrt{ \ln N }}{2 \sqrt{ 2 \pi }}.
	\end{equation*}
	Making use of our parameter values from \cite{Mehri2019}, and the fact that $ 1 \leq r \leq \frac{N}{36} $,
	\begin{equation}
		\label{eq:ConstantBoundOnGaussianSum}
		\begin{aligned}
			\frac{1}{N} \sum_{ k = -\kappa }^\kappa g(x - y_{ j'+ k })
				&\leq \left(\frac{3}{\beta \sqrt{ \ln N }} + \frac{1}{N \sqrt{ 2 \pi }}\right) \left( 1 + \frac{1 + A}{2 \sqrt{ 2 \pi }} \beta \sqrt{ \ln N } \right) \\
				&\leq \frac{3}{6 \sqrt{ \ln 36 }} + \frac{3(1 + A)}{2 \sqrt{ 2 \pi }} + \frac{1}{36 \sqrt{ 2\pi }} + \frac{1+A}{4\pi} \sqrt{ \frac{\ln 36}{36} } \\
				&< 2.
		\end{aligned}
	\end{equation}

	With our revised bound for \eqref{eq:NoisyConvolutionBound} above, we reprove \cite[Theorem 4]{Mehri2019} to estimate $ g \ast f $ by the truncated discrete convolution with noisy samples.
	In particular, we apply \cite[Theorem 3]{Mehri2019}, \eqref{eq:NoisyConvolutionBound}, \eqref{eq:GaussianBound}, and finally our same assumption that $ 1 \leq r \leq \frac{N}{36}  $ to obtain
	\begin{align*}
		&\left| (g * f) (x) - \frac{1}{N} \sum_{ j = j' - \left \lceil \frac{6r}{\sqrt{ 2 } \pi} \ln N  \right \rceil - 1 }^{ j' + \left \lceil \frac{6r}{\sqrt{ 2 }\pi} \ln N \right \rceil + 1 } (f(y_j) + \mu_j) g(x - y_j) \right|\\
			&\qquad\leq \frac{N^{ 1 - r }}{6 \sqrt{ r } \sqrt{ \ln N }} \| \mathbf{f} \|_\infty N^{ -r } + \left( \frac{3}{\sqrt{ 2\pi }} + \frac{1}{2\pi} \sqrt{ \frac{\ln 36}{36} } \right) \| \mathbf{f} \|_\infty N^{ -r } + 2 \| \boldsymbol{\mu} \|_\infty \\
			&\qquad\leq \left( \frac{1}{6 \sqrt{ \ln 36 }} + \frac{3}{\sqrt{ 2\pi }} + \frac{1}{2\pi } \sqrt{ \frac{\ln 36}{36} } \right)\frac{ \| \mathbf{f} \|_\infty }{N^r} + 2 \| \boldsymbol{\mu} \|_\infty < 2 \left( \frac{\| \mathbf{f} \|_\infty}{N^r} + \| \boldsymbol{\mu}\|_\infty \right).
	\end{align*}
	Replacing all references of $ 3 \| \f \|_{ \infty } N^{ -r }  $ by $ 2( \| \f \|_\infty N^{ -r } + \| \boldsymbol{\mu} \|_\infty ) $ in the remainder of the steps up to proving \cite[Theorem 5]{Mehri2019} gives the desired noise robustness (with a slightly improved constant).

	Using the revised error estimates of the nonequispaced algorithm from Theorem~\ref{thm:1dSFT} and redefining $ \delta = 3 (\| \hat \f - \hat \f_s^\mathrm{opt} \|_1 / 2s + 2( \| \f \|_\infty N^{ -r } +  \| \boldsymbol{ \mu } \|_\infty)) $ as in the proof of \cite[Theorem 5]{Mehri2019} (which also contains the proof of property (2)), the discretization algorithm \cite[Algorithm 1]{Mehri2019} will produce candidate Fourier coefficient approximations in lines 9 and 12 corresponding to every $ |\hat f_\omega| \geq (4 + 2 \sqrt{ 2 }) \delta $ in place of $ 4\delta $ in Theorem~\ref{thm:1dSFT}.
	The exact same argument as in the proof of Theorem~\ref{thm:1dSFT} then applies to the selection of the $ 2s $-largest entries of this approximation with the revised threshold values and error bounds to give properties \eqref{it:1dDiscreteSFT:FindSignificantFrequencies} and \eqref{it:1dDiscreteSFT:2Estimate}.

	In detail, \cite[Lemma 13]{Mehri2019} and the discussion right after its statement gives that property (2) holds for any approximate coefficient with frequency recovered throughout the algorithm (which, for the purposes of the following discussion, we will store in $ \mathbf{x} $ rather than $ \hat R $ defined in \cite[Algorithm 1]{Mehri2019}), not just those in the final output $ \mathbf{v} := \mathbf{x}_s^{ \mathrm{opt} } $.
	Additionally, by the same lemma and our revised bounds from Theorem~\ref{thm:1dSFT}, any frequency $ \omega \in [N] $ satisfying $ |f_\omega| > (4 + 2 \sqrt{ 2 })\delta $ will have an associated coefficient estimate in $ \mathbf{x} $.

	By Lemma~\ref{lem:ErrorInTauSigFrequencies}, $ | \mathcal{S}_{ (4 + 2 \sqrt{ 2 })\delta } | \leq 2s = | \supp(\mathbf{v}) | $, and so if $ \omega \in \mathcal{S}_{ (4 + 2 \sqrt{ 2 })\delta} \setminus \supp( \mathbf{v} ) $, there exists some $ \omega' \in \supp(\mathbf{v}) \setminus \mathcal{S}_{ (4 + 2 \sqrt{ 2 })\delta } $ such that $ v_{\omega'} $ took the place of $ v_{ \omega } $ in $ \mathcal{S} $.
	In particular, this means that $ |x_{ \omega' }| \geq |x_{ \omega }| $, $ |\hat f_{ \omega' }| \leq (4 + 2 \sqrt{ 2 })\delta $, and $ |\hat f_{ \omega }| > (4 + 2 \sqrt{ 2 }\delta) $.
	Thus,
	\begin{equation*}
		(4 + 2 \sqrt{ 2 })\delta + \sqrt{ 2 }\delta > |\hat f_{ \omega' }| + \sqrt{ 2 } \delta \geq |x_{ \omega' }| \geq |x_\omega| \geq |\hat f_{ \omega }| - \sqrt{ 2 }\delta,
	\end{equation*}
	implying that $ |\hat f_\omega| \leq 4(1 + \sqrt{ 2 })\delta $ and therefore proving (1).

	Finally, to prove (3), we use Lemma~\ref{lem:ErrorInTauSigFrequencies}, and consider
	\begin{align*}
		\| \hat \f - \mathbf{v} \|_2
			&\leq \| \hat \f - \hat \f \rvert_{ \supp (\mathbf{v}) } \|_2 - \| ( \hat \f - \mathbf{v} )\rvert_{ \supp (v) } \|_2 \\
			&\leq \| \hat \f - \hat \f\rvert_{ \mathcal{S}_{ (4 + 2 \sqrt{ 2 })\delta } \cap \supp (\mathbf{v}) } \|_2+ \sqrt{ 2 }\delta\sqrt{ 2s } \\
			&\leq \| \hat \f - \hat \f\rvert_{ \mathcal{S}_{ (4 + 2 \sqrt{ 2 })\delta } } \|_2 + \| \hat \f\rvert_{ \mathcal{S}_{ (4 + 2 \sqrt{ 2 })\delta } \setminus \supp (\mathbf{v}) }\|_2  + 2 \delta \sqrt{ s } \\
			&\leq \| \hat \f - \hat \f_{ 2s }^\mathrm{opt}\|_2 + (4 + 2 \sqrt{ 2 })\delta \sqrt{ 2s } + 4(1 + \sqrt{ 2 })\delta \sqrt{ 2s } + 2 \delta \sqrt{ s } \\
			&\leq \|\hat \f - \hat \f_{ 2s }^\mathrm{opt} \|_2 + (14 + 8 \sqrt{ 2 }) \delta \sqrt{ s }
	\end{align*}
	which finishes the proof.
 \end{proof}

\section{Fast multivariate sparse Fourier transforms}
\label{sec:multivariate-results}

Having detailed two sublinear-time, one-dimensional SFT algorithms, we are now prepared to extend these to the multivariate setting.
The general approach will be to apply the one-dimensional methods to transformations of our multivariate function of interest with samples taken along rank-1 lattices.
The particular approaches for transforming our multivariate function will then allow for the efficient extraction of multidimensional frequency information for the most energetic coefficients identified by univarate SFTs.
In particular, our first approach considered in Section~\ref{sub:PhaseEnc} successively shifts the function in each dimension, whereas our second approach considered in Section~\ref{sub:2dDFT} successively collapses all but one dimension along a rank-1 lattice and samples the resulting two-dimensional function.

Before continuing, it is important to stress that these two approaches given in Algorithms~\ref{alg:PhaseEnc} and \ref{alg:2dDFT} below can make use of \emph{any} univariate SFT algorithm $ \mathcal{A}_{ s,M } $.
Thus, the analysis of each algorithm is presented in a similarly modular fashion.
Each algorithm is followed by a lemma (Lemma~\ref{lem:PhaseEncRecovery} and Lemma~\ref{lem:2dDFTRecovery} respectively) which provides associated error guarantees when any sufficiently accurate univariate SFT $ \mathcal{A}_{ s, M } $ is employed.
The lemmas are then each followed by two corollaries (Corollaries~\ref{cor:PhaseEncRecoveryDiscreteSFT} and \ref{cor:PhaseEncRecoverySublinearSFT} and Corollaries~\ref{cor:2dDFTRecoveryDiscreteSFT} and \ref{cor:2dDFTRecoverySublinearSFT} respectively) where we apply the lemma to the two example univariate SFTs reviewed in Section~\ref{sec:1dSFTs} specified by Theorems~\ref{thm:1dDiscreteSFT} and \ref{thm:1dSFT}.

\subsection{Phase encoding}
\label{sub:PhaseEnc}

We begin with a review of how the shift operator $ S_{ \ell, \alpha } $ interacts with the Fourier transform.

\begin{lem}
	\label{lem:ShiftedFunctionCoefficients}
	For any dimension $ \ell \in [d] $, shift $ \alpha \in \mathbbm{ R } $, and function $ f : \mathbbm{ T }^d \rightarrow \C $, shifting $ f $ with the operator $ S_{ \ell, \alpha } $ modulates the Fourier coefficients as $ c_\k(S_{ \ell, \alpha } f) = \mathbbm{ e }^{ 2 \pi \mathbbm{ i } k_\ell \alpha } \, c_\k(f)$ for all $ \k \in \mathbbm{ Z }^d $.
\end{lem}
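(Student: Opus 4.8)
The plan is to compute $c_\k(S_{\ell,\alpha} f)$ directly from the integral definition of the Fourier coefficient and reduce it to $c_\k(f)$ by a translation-invariance argument on the torus. First I would unwind both definitions to write
\begin{equation*}
	c_\k(S_{\ell,\alpha} f) = \int_{\mathbbm{T}^d} S_{\ell,\alpha}(f)(\x)\, \mathbbm{e}^{-2\pi\mathbbm{i}\k\cdot\x}\,\mathrm{d}\x = \int_{\mathbbm{T}^d} f\big(x_1,\ldots,(x_\ell+\alpha)\bmod 1,\ldots,x_d\big)\, \mathbbm{e}^{-2\pi\mathbbm{i}\k\cdot\x}\,\mathrm{d}\x,
\end{equation*}
using the definition of the shift operator $S_{\ell,\alpha}$ in the integrand.

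Next I would change variables in the $\ell$-th coordinate alone, setting $y_\ell := (x_\ell+\alpha)\bmod 1$ so that $x_\ell = (y_\ell-\alpha)\bmod 1$ and $\mathrm{d}x_\ell = \mathrm{d}y_\ell$. Since the $\ell$-th integration runs over the full torus $\mathbbm{T}$ and translation by $\alpha$ is a measure-preserving bijection of $\mathbbm{T}$, the domain is unchanged. The key observation is that $t\mapsto\mathbbm{e}^{-2\pi\mathbbm{i}k_\ell t}$ is $1$-periodic for $k_\ell\in\mathbbm{Z}$, so the modular reduction may be dropped inside the exponential, giving $\mathbbm{e}^{-2\pi\mathbbm{i}k_\ell x_\ell} = \mathbbm{e}^{-2\pi\mathbbm{i}k_\ell(y_\ell-\alpha)} = \mathbbm{e}^{2\pi\mathbbm{i}k_\ell\alpha}\,\mathbbm{e}^{-2\pi\mathbbm{i}k_\ell y_\ell}$. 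Substituting this and pulling the constant $\mathbbm{e}^{2\pi\mathbbm{i}k_\ell\alpha}$ (independent of every integration variable) out of the integral, the remaining integrand is exactly $f(\y)\,\mathbbm{e}^{-2\pi\mathbbm{i}\k\cdot\y}$ integrated over $\mathbbm{T}^d$, which is $c_\k(f)$ by definition. This yields $c_\k(S_{\ell,\alpha}f) = \mathbbm{e}^{2\pi\mathbbm{i}k_\ell\alpha}\,c_\k(f)$ for every $\k\in\mathbbm{Z}^d$, as claimed.

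There is essentially no hard step here; the only point warranting a moment's care is the justification for discarding the $\bmod 1$ in the shift inside the complex exponential, which rests entirely on the $1$-periodicity of $t\mapsto\mathbbm{e}^{-2\pi\mathbbm{i}k_\ell t}$ for integer $k_\ell$ together with the translation invariance of Lebesgue measure on $\mathbbm{T}$. One could alternatively argue termwise on the Fourier series $f = \sum_{\k\in\mathbbm{Z}^d} c_\k(f)\,\mathbbm{e}^{2\pi\mathbbm{i}\k\cdot\circ}$, but the direct change-of-variables route avoids any convergence bookkeeping and applies to any $f\in L^1(\mathbbm{T}^d)$.
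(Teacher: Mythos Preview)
Your argument is correct and takes a genuinely different route from the paper. The paper expands $f$ into its Fourier series inside the integral defining $c_\k(S_{\ell,\alpha}f)$, swaps sum and integral, and uses orthogonality of the characters to collapse everything to $\mathbbm{e}^{2\pi\mathbbm{i}k_\ell\alpha}c_\k(f)$. You instead perform a direct change of variables $y_\ell = (x_\ell+\alpha)\bmod 1$ in the $\ell$-th slot and invoke translation invariance of Lebesgue measure on $\mathbbm{T}$. Your route is slightly cleaner: it requires only $f\in L^1(\mathbbm{T}^d)$ and sidesteps any question about interchanging sum and integral, whereas the paper's termwise argument implicitly leans on some summability of the Fourier series (fine in the Wiener-algebra context of the paper, but not stated in the lemma itself). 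You even flag this alternative at the end, which is exactly the paper's approach.
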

\begin{proof}
	Using the Fourier series of $ f $ in the computation of $ c_\k(S_{ \ell, \alpha } f) $ gives
	\begin{align*}
		c_\k(S_{ \ell, \alpha } f) 
			&= \int_{ \mathbbm{ T }^d } S_{ \ell, \alpha } f(\x) \, \mathbbm{ e }^{ -2 \pi \mathbbm{ i } \k \cdot \x  } \; \mathrm{d} \x \\
			&= \int_{ \mathbbm{ T }^d } \sum_{ \h \in \mathbbm{ Z }^d } c_\h(f) \, \mathbbm{ e }^{ 2 \pi \mathbbm{ i } (h_\ell (x_\ell + \alpha) + \h_\ell' \cdot \x_\ell') } \, \mathbbm{ e }^{ -2 \pi \mathbbm{ i } \k \cdot \x } \;\mathrm{d} \x \\
			&= \sum_{ \h \in \mathbbm{ Z }^d } \mathbbm{ e }^{ 2 \pi \mathbbm{ i } h_\ell\alpha } \, c_\h(f) \int_{ \mathbbm{ T }^{ d } } \mathbbm{ e }^{ 2 \pi \mathbbm{ i } (\h - \k) \cdot \x } \; \mathrm{d} \x \\
			&= \mathbbm{ e }^{ 2 \pi \mathbbm{ i } k_\ell \alpha } \, c_\k(f),
	\end{align*}
	as desired.
\end{proof}

By performing additional lattice SFTs on shifted versions of our original function, we can then separate out the components of recovered frequencies in modulations of the function's Fourier coefficients.
Using the common one-dimensional recovered frequencies between each transform in the form $ \k \cdot \z \bmod M $, we can then reconstruct the multivariate frequencies.
This approach of encoding frequencies in the phase is summarized in Algorithm~\ref{alg:PhaseEnc}.

\begin{algorithm}[H]
	\caption{Simple Frequency Index Recovery by Phase Encoding}
	\label{alg:PhaseEnc}
	\begin{algorithmic}[1]
		\REQUIRE A multivariate periodic function $ f \in \mathcal{W}(\mathbbm{ T }^d \cap C (\mathbbm{ T }^d) $ (from which we are able to obtain potentially noisy samples), a multivariate frequency set $ \mathcal{I} \subset \mathcal{B}_N^d $, a reconstructing rank-1 lattice $ \Lambda(\z, M) $ for $ \mathcal{I} $, and an SFT algorithm $ \mathcal{A}_{ s, M } $.
		\ENSURE Sparse coefficient vector $ \mathbf{b} = (b_\k)_{ \k \in \mathcal{B}_N^d } $ (optionally supported on $ \mathcal{I} $, see Line~\ref{alg:PhaseEnc:Check}), an approximation to $ \left( c\rvert_{ \mathcal{I} } \right)_s^\mathrm{ opt } $.
		\STATE Apply $ \mathcal{A}_{ s, M } $ to the univariate restriction of $ f $ to the lattice, $ a(t) = f(t \z) $, to produce $ \v = \mathcal{A}_{ s, M } a $, a sparse approximation of $ \F_M \, \a \in \C^M $. \label{alg:PhaseEnc:LatticeSFT}
		\FORALL{$\ell \in [d]$}
		\STATE Apply $ \mathcal{A}_{ s, M } $ to $ a^\ell(t) = S_{ \ell, 1/N }f(t\z) $ to produce $ \v^\ell = \mathcal{A}_{ s, M } a^\ell $, a sparse approximation of $ \F_M \, \a^\ell \in \C^M $.
			\label{alg:PhaseEnc:ShiftedLatticeSFTs}
		\ENDFOR
		\STATE $ \mathbf{b} \gets \mathbf{0} $
		\FORALL{$\omega \in \supp (\v) \subset \mathcal{B}_M$} \label{alg:PhaseEnc:BeginRecovery}
			\FORALL{$\ell \in [d]$}
			\STATE $ (k_{ \omega})_{ \ell } \gets \mathrm{round}(N \arg( v^\ell_\omega / v_\omega) / 2\pi) $ 
			\ENDFOR
			\IF{$\k_\omega \cdot \z \equiv \omega \imod{M}$ (and optionally $ \k_\omega \in \mathcal{I}$; see Remark~\ref{rem:AssignOnlyToI})} \label{alg:PhaseEnc:Check}
				\STATE $ b_{ \k_\omega } \gets b_{ \k_\omega } + v_\omega $ 
			\ENDIF
			\ENDFOR \label{alg:PhaseEnc:EndRecovery}
	\end{algorithmic}
\end{algorithm}

\subsubsection{Analysis of Algorithm~\ref{alg:PhaseEnc}}

\begin{lem}[General recovery result for Algorithm~\ref{alg:PhaseEnc}]
	\label{lem:PhaseEncRecovery}
	Let $ \mathcal{A}_{ s, M } $ in the input to Algorithm~\ref{alg:PhaseEnc} be a noise-robust SFT algorithm which, for a function $ a \in \mathcal{W}(\mathbbm{ T }) \cap C( \mathbbm{ T } ) $ corrupted by some arbitrary noise $ \mu: \mathbbm{T} \rightarrow \C $, constructs an $ s $-sparse Fourier approximation $ \mathcal{A}_{ s, M } (a + \mu) =: \v \in \C^M $ which
	\begin{enumerate}
		\item reconstructs every frequency (up to $ s $ many) of $ \F_M \, \a \in \C^M $, $ \omega \in \mathcal{B}_M$, with corresponding Fourier coefficient meeting the tolerance $ |(\F_M \, \a)_\omega| > \tau $, 
		\item satisfies the $ \ell^\infty $ error estimate for recovered coefficients
		\begin{equation*}
			\left\| (\F_M \, \a - \v)\rvert_{ \supp (\v) } \right\|_\infty \leq \eta_\infty < \tau,
		\end{equation*}
		\item satisfies the $ \ell^2 $ error estimate
		\begin{equation*}
			\left\| \F_M \, \a - \v \right\|_2 \leq \eta_2,
		\end{equation*}
		\item and requires $ \mathcal{O}(P(s, M)) $ total evaluations of $ a $, operating with computational complexity $\mathcal{O}(R(s, M))$.
	\end{enumerate}
	Additionally, assume that the parameters $ \tau $ and $ \eta_\infty $ hold uniformly for each SFT performed in Algorithm~\ref{alg:PhaseEnc}.

	Let $ f $, $ \mathcal{I} $, and $ \Lambda(\z, M) $ be as specified in the input to Algorithm~\ref{alg:PhaseEnc}.
	Collecting the $ \tau $-significant frequencies of $ f $ into the set $ \mathcal{S}_\tau := \{ \k \in \mathcal{I} \mid |c_\k (f) | > \tau \} $, assume that $ |\supp( c ) \cap \mathcal{S}_\tau | \leq s $, and set 
	\begin{equation*}
		\beta = \max \left( \tau, \eta_\infty \left( 1 + \frac{2}{\sin \left( \frac{\pi}{N} \right)} \right) \right).
	\end{equation*}
	Then Algorithm~\ref{alg:PhaseEnc} (ignoring the optional check on Line~\ref{alg:PhaseEnc:Check}) will produce an $ s $-sparse approximation~$\mathbf{b}$ of the Fourier coefficients of~$ f $ satisfying the error estimate
	\begin{align*}
		\left\| \mathbf{b} - c(f) \right\|_{ \ell^2( \mathbbm{ Z }^d )}
		&\leq \eta_2 + (\beta + \eta_\infty) \sqrt{ \max(s - |\mathcal{S}_\beta|, 0) } \\
		&\qquad+ \| c(f)\rvert_{ \mathcal{I} } - c(f) \rvert_{ \mathcal{S}_\beta } \|_{ \ell^2( \mathbbm{ Z }^d ) } + \| c(f) - c(f)\rvert_{ \mathcal{ I } } \|_{ \ell^2(\mathbbm{ Z }^d) }
	\end{align*}
	requiring $ \mathcal{O} \left( d \cdot P(s, M) \right) $ total evaluations of $ f $, in $ \mathcal{O}\left(d \cdot (R(s, M) + s) \right) $ total operations.
\end{lem}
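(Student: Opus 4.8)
The plan is to regard the lattice restriction $a(t) = f(t\z)$ fed to each transform as a clean signal corrupted by noise: take the clean part to be $a_g(t) := \sum_{\k \in \mathcal{I}} c_\k(f)\,\mathbbm{e}^{2\pi\mathbbm{i}(\k\cdot\z)t}$ (the restriction of $f\rvert_{\mathcal{I}}$ to $\Lambda(\z,M)$) and absorb $(f - f\rvert_{\mathcal{I}})(t\z)$ into the noise $\mu$, so that assumptions (1)--(4) are read against $\F_M\,\a_g$. First I would record the aliasing identities: combining the lattice-function expansion \eqref{eq:LatticeFunction}, Lemma~\ref{lem:ShiftedFunctionCoefficients} for the shifted transforms, and the aliasing Lemma~\ref{lem:DFTAliasing} gives $(\F_M\,\a_g)_\omega = \sum_{\k\in\mathcal{I},\,\k\cdot\z\equiv\omega} c_\k(f)$ and $(\F_M\,\a_g^\ell)_\omega = \sum_{\k\in\mathcal{I},\,\k\cdot\z\equiv\omega} \mathbbm{e}^{2\pi\mathbbm{i}k_\ell/N} c_\k(f)$. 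Because $\Lambda(\z,M)$ is reconstructing for $\mathcal{I}$, the map $m_{\z,M}$ is injective on $\mathcal{I}$, so each $\omega$ has at most one preimage $\k_\omega\in\mathcal{I}$ and these sums collapse to the single terms $c_{\k_\omega}(f)$ and $\mathbbm{e}^{2\pi\mathbbm{i}(k_\omega)_\ell/N}c_{\k_\omega}(f)$.

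The crux of the argument---and the step I expect to be the main obstacle---is showing that every significant frequency is recovered \emph{exactly} by the rounding step. Fix $\k^*\in\mathcal{S}_\beta$ and set $\omega^* := \k^*\cdot\z \bmod M$. Since $|c_{\k^*}(f)| > \beta \geq \tau$, assumption (1) applied uniformly to the unshifted transform and to all $d$ shifted transforms places $\omega^*$ in $\supp(\v)$ and in each $\supp(\v^\ell)$, so by assumption (2) we may write $v_{\omega^*} = c_{\k^*}(f) + \epsilon_0$ and $v^\ell_{\omega^*} = \mathbbm{e}^{2\pi\mathbbm{i}k^*_\ell/N}c_{\k^*}(f) + \epsilon_\ell$ with $|\epsilon_0|, |\epsilon_\ell| \leq \eta_\infty$. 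A direct computation bounds the phase-ratio deviation by $|v^\ell_{\omega^*}/v_{\omega^*} - \mathbbm{e}^{2\pi\mathbbm{i}k^*_\ell/N}| \leq 2\eta_\infty / |v_{\omega^*}| \leq 2\eta_\infty/(\beta - \eta_\infty)$, and the definition of $\beta$ is calibrated precisely so that this last quantity is at most $\sin(\pi/N)$. The elementary geometric fact that any point within distance $\sin(\pi/N)$ of $\mathbbm{e}^{\mathbbm{i}\phi}$ has argument within $\pi/N$ of $\phi$ then forces $\mathrm{round}(N\arg(v^\ell_{\omega^*}/v_{\omega^*})/2\pi) = k^*_\ell$ for every $\ell$, since the admissible phases $2\pi k/N$ are spaced so that the half-window for correct rounding is exactly $\pi/N$. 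Hence $\k_{\omega^*} = \k^*$, the consistency check on Line~\ref{alg:PhaseEnc:Check} passes, and $b_{\k^*} = v_{\omega^*}$. The care here lies in tracking that the $\tau$- and $\eta_\infty$-guarantees hold simultaneously across all $d+1$ transforms (the uniformity hypothesis) and in the boundary analysis of the rounding.

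With exact recovery of $\mathcal{S}_\beta$ in hand, I would finish by an error decomposition. Writing $R := \supp(\mathbf{b})$, the check on Line~\ref{alg:PhaseEnc:Check} makes $\omega\mapsto\k_\omega$ injective on the frequencies it passes (it forces $\omega = \k_\omega\cdot\z\bmod M$), so $|R|\leq s$ and $\mathcal{S}_\beta\subseteq R$, with at most one $v_\omega$ assigned to each entry of $\mathbf{b}$. Splitting $\|\mathbf{b} - c(f)\|_{\ell^2(\mathbbm{Z}^d)} \leq \|\mathbf{b} - c(f)\rvert_{\mathcal{S}_\beta}\|_{\ell^2(\mathbbm{Z}^d)} + \|c(f)\rvert_{\mathcal{S}_\beta} - c(f)\|_{\ell^2(\mathbbm{Z}^d)}$, I bound the first summand by treating the two parts of $R$ separately: on $\mathcal{S}_\beta$ the errors $|v_\omega - (\F_M\,\a_g)_\omega|$ run over distinct $\omega$ and aggregate to at most $\eta_2$ by assumption (3), while on the spurious set $R\setminus\mathcal{S}_\beta$, whose size is at most $\max(s - |\mathcal{S}_\beta|, 0)$, each $|b_\k| = |v_\omega|$ is at most $\beta + \eta_\infty$ because the underlying clean coefficient $(\F_M\,\a_g)_\omega$ has magnitude at most $\beta$ (otherwise its preimage would lie in $\mathcal{S}_\beta$ and the previous paragraph would force $\k_\omega$ into $\mathcal{S}_\beta$). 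Subadditivity of the square root then yields $\eta_2 + (\beta+\eta_\infty)\sqrt{\max(s-|\mathcal{S}_\beta|,0)}$. For the second summand I split the tail $\mathbbm{Z}^d\setminus\mathcal{S}_\beta$ into $\mathcal{I}\setminus\mathcal{S}_\beta$ and $\mathbbm{Z}^d\setminus\mathcal{I}$, producing $\|c(f)\rvert_{\mathcal{I}} - c(f)\rvert_{\mathcal{S}_\beta}\|_{\ell^2(\mathbbm{Z}^d)} + \|c(f) - c(f)\rvert_{\mathcal{I}}\|_{\ell^2(\mathbbm{Z}^d)}$, and the triangle inequality assembles the four claimed terms.

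Finally, the complexity count is immediate. The SFTs in Lines~\ref{alg:PhaseEnc:LatticeSFT} and \ref{alg:PhaseEnc:ShiftedLatticeSFTs} perform $d+1$ transforms at $\mathcal{O}(P(s,M))$ evaluations and $\mathcal{O}(R(s,M))$ operations each, while the recovery loop in Lines~\ref{alg:PhaseEnc:BeginRecovery}--\ref{alg:PhaseEnc:EndRecovery} runs over at most $s$ frequencies and $d$ coordinates in $\mathcal{O}(ds)$ operations, for a total of $\mathcal{O}(d\,P(s,M))$ evaluations of $f$ and $\mathcal{O}(d(R(s,M)+s))$ operations, as stated.
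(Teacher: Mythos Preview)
Your proposal is correct and follows essentially the same approach as the paper: treat the out-of-$\mathcal{I}$ part of $f$ as noise so that the SFT guarantees apply to the clean lattice signal $a_g$, use the reconstructing property and the geometric $\arcsin$ argument (calibrated by the choice of $\beta$) to show that every $\k\in\mathcal{S}_\beta$ is rounded exactly, and then decompose the error into the $\eta_2$-controlled recovered part, the at most $\max(s-|\mathcal{S}_\beta|,0)$ spurious assignments of size $\leq\beta+\eta_\infty$, and the two tail terms. The only cosmetic differences are that the paper first argues the trigonometric-polynomial case and extends afterward (you absorb the truncation into $\mu$ from the start), and the paper phrases the phase bound as $\arg\bigl(1+(\eta_\k^\ell-\eta_\k)/v\bigr)$ rather than as a distance to $\mathbbm{e}^{2\pi\mathbbm{i}k_\ell^*/N}$; both lead to the same $\arcsin(2\eta_\infty/(\beta-\eta_\infty))\leq\pi/N$ step.
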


\begin{proof}
	We begin by assuming that $ f $ is a trigonometric polynomial with $ \supp (c(f)) \subset \mathcal{I} $.
	Since $ \Lambda(\z, M) $ is a reconstructing rank-1 lattice for $ \mathcal{I} $, Lemmas~\ref{lem:DFTEqualsFT} and \ref{lem:DFTAliasing} ensure that for each $ \k \in \mathcal{I} $, $ c_\k(f) = \hat a_{ \k \cdot \z \bmod M } = (\F_M \, \a)_{ \k \cdot \z \bmod M } $.
	Thus, Lines~\ref{alg:PhaseEnc:LatticeSFT} and \ref{alg:PhaseEnc:ShiftedLatticeSFTs} of Algorithm~\ref{alg:PhaseEnc} will produce coefficient estimates in the lattice DFT for every $ \k \in \mathcal{S}_\tau $.
	We then write these SFT approximations as $  v_{ \k \cdot \z \bmod M } = c_\k(f) + \eta_\k $ and $ v_{ \k \cdot \z \bmod M }^\ell = \mathbbm{ e }^{ 2\pi \mathbbm{ i }k_\ell / N }(c_\k(f) + \eta_\k^\ell) $ respectively, where we have made use of Lemma~\ref{lem:ShiftedFunctionCoefficients}.
	Note that $ |\eta_\k|, |\eta_\k^\ell| \leq \eta_\infty $.
	Now, considering the estimate for $ k_\ell $, we have
	\begin{align*}
		\frac{N}{2 \pi} \arg \left( \frac{v_{ \k \cdot \z \bmod M }^\ell}{v_{ \k \cdot \z \bmod M }} \right) 
		&= \frac{N}{2 \pi} \arg \left( \mathbbm{ e }^{ 2 \pi \mathbbm{ i } k_\ell / N } \frac{c_\k(f) + \eta_\k^\ell}{v_{ \k \cdot \z \bmod M }}  \right) \\
		&= k_\ell +  \frac{N}{2 \pi} \arg \left( \frac{c_\k(f) + \eta_\k^\ell}{v_{ \k \cdot \z \bmod M }}  \right) \\
		&= k_\ell + \frac{N}{2 \pi} \arg \left( 1 + \frac{\eta_\k^\ell - \eta_\k}{v_{ \k \cdot \z \bmod M }} \right).
	\end{align*}
	We now only consider $ |c_\k| > \beta \geq \max(\tau, 3\eta_\infty) $, that is $ \k \in \mathcal{S}_{ \beta } \subset \mathcal{S}_\tau $, and therefore, the corresponding approximate coefficient satisfies $ |v_{ \k \cdot \z \bmod M } | > \beta - \eta_\infty $.
	Thus, the magnitude of the fraction in the argument must be strictly less than $ \frac{2\eta_\infty}{\beta - \eta_\infty} \leq 1$.
	Therefore, we consider the argument of a point lying in the right half of the complex plane, in the open disc of radius $ \frac{2\eta_\infty}{\beta - \eta_\infty} $ centered at $ 1 $.
	The maximal absolute argument of a point in this disc will be that of a point lying on a tangent line passing through the origin.
	This point, the origin, and $ 1 $ then form a right triangle from which we deduce that 
	\begin{equation*}
		\left| \arg \left( 1 + \frac{\eta_\k^\ell - \eta_\k}{v_{ \k \cdot z \bmod M }}  \right) \right| < \arcsin \left( \frac{2 \eta_\infty}{\beta - \eta_\infty}  \right) \leq \frac{\pi}{N}.
	\end{equation*}
	Our choice of $ \beta \geq \eta_\infty (1 + 2 / \sin(\pi/N)) $ then implies that
	\begin{equation*}
		\left| \frac{N}{2\pi} \arg \left( \frac{v_{ \k \cdot \z \bmod M }^\ell}{v_{ \k \cdot \z \bmod M }}  \right) - k_\ell \right| < \frac{1}{2},
	\end{equation*}
	and so after rounding to the nearest integer, Algorithm~\ref{alg:PhaseEnc} will recover $ k_\ell $ for all $ \ell \in [d] $ and $ \k \in \mathcal{S}_{ \beta } $.

	By this mapping constructed in the final loop of Algorithm~\ref{alg:PhaseEnc}, we set $ b_{ \k } := v_{ \k \cdot \z \bmod M } $  for each $ \k \in \mathcal{S}_{ \beta } $. 
	Additionally, the $ \max(s - | \mathcal{S}_\beta|, 0) $ many coefficients $ v_{ \omega } $ for which $ \omega \neq \k \cdot \z \bmod M $ for any $ \k \in \mathcal{S}_\beta $ are still available for potential assignment.
	If any multivariate frequency $ \k_\omega \in \mathcal{I} $ is reconstructed and passes the mandatory check in Line~\ref{alg:PhaseEnc:Check} then the approximate Fourier coefficient $ v_\omega $ properly corresponds to $ (\F_M \, \a)_{ \k_\omega \cdot \z \bmod M } = c_{ \k_\omega }(f) $.

	On the other hand, if some error introduced in the SFTs reconstructs a multivariate frequency $ \k_\omega \notin \mathcal{I} $, the reconstructing property does not allow us to conclude anything about a $ (k_\omega , \omega) $ pair passing the check in Line~\ref{alg:PhaseEnc:Check}.
	Thus, it is possible that $ v_\omega $ will contribute to some component of $ \mathbf{b} $ not corresponding to any frequency in $ \mathcal{I} $.
	At the least however, since we know that all entries of $ \v $ corresponding to frequencies in $ \mathcal{S}_\beta $ are correctly assigned, the remaining ones satisfy $ |v_\omega| \leq \beta + \eta_\infty $.
	Using these facts allows us to estimate the error as 
	\begin{equation}
		\label{eq:PolyPhaseEncEstimate}
		\begin{aligned}
			\| \mathbf{b} - c \|_{ \ell^2( \mathbbm{ Z }^d) } 
				&\leq \| \mathbf{b}\rvert_{ \mathbbm{ Z }^d \setminus \mathcal{I} } \|_{ \ell^2( \mathbbm{ Z }^d ) } + \| \mathbf{b}\rvert_{ \mathcal{I} } - c\rvert_{ \supp (\mathbf{b}) \cap \mathcal{I} } \|_{ \ell^2 (\mathbbm{ Z }^d ) } + \| c - c\rvert_{ \mathcal{S}_\beta } \|_{ \ell^2 ( \mathbbm{ Z }^d ) } \\
				&\leq (\beta + \eta_\infty) \sqrt{ \max(s - | \mathcal{S}_\beta|, 0) } + \eta_2 + \| c - c \rvert_{ \mathcal{S}_\beta } \|_{ \ell^2( \mathcal{I}) }
		\end{aligned}
	\end{equation}
	where we have additionally used the accuracy of the initial one-dimensional SFT and the assumption that $ c $ is supported on $ \mathcal{I} $.

	We now handle the case when $ f $ is not necessarily a polynomial with Fourier support contained in $ \mathcal{I} $.
	Rather than aiming to approximate $ c_\k(f) $ for every $ \k \in \mathbbm{ Z }^d $, we restrict attention to only frequencies in $ \mathcal{I} $, instead attempting to approximate the Fourier coefficients of $ f_{ \mathcal{I} } = \sum_{ \k \in \mathcal{I} } c_\k(f) \mathbbm{ e }^{ 2\pi \mathbbm{ i } \k \cdot \circ } $.
	We then have that $ f =: f_{ \mathcal{I} } + f_{ \mathbbm{ Z }^d \setminus \mathcal{I} } $ and view potentially noisy input $ f + \mu $ to our algorithm as 
	\begin{equation*}
		f + \mu = f_{ \mathcal{I} } + \underbrace{f_{ \mathbbm{ Z }^d \setminus \mathcal{I} } + \mu}_{\mu'}.
	\end{equation*}

	Algorithm~\ref{alg:PhaseEnc} applied to $ f + \mu $ is then equivalent to applying it to $ f_{ \mathcal{I} } + \mu'$, where now $ \tau $, $ \eta_\infty $, and $ \eta_2 $ depend on $ \mu' $, and the output is an approximation of $ c \rvert_{ \mathcal{I} } $.
	Since $ \mu' $ represents noise on the input to $ \mathcal{A}_{ s, M } $ in its applications to $ f_{ \mathcal{I} }(t\z) $ and $ S_{ \ell, 1/N }f_{ \mathcal{I} }(t \z) $ we remark here that
	\begin{equation}
		\label{eq:TruncationAsMeasurementNoise}
		\| \mu' \|_\infty \leq \|f_{ \mathbbm{ Z }^d \setminus \mathcal{I} } \|_\infty + \| \mu \|_\infty \leq \| c(f) - c(f) \rvert_{ \mathcal{I} } \|_{ \ell^1( \mathbbm{ Z }^d ) } + \| \mu \|_\infty
	\end{equation}
	so as to help us estimate $ \tau $, $ \eta_\infty $, and $ \eta_2 $ in future applications of the lemma.
	Accounting for the truncation to $ \mathcal{I} $ in the $ \ell^2 $ error bound and using \eqref{eq:PolyPhaseEncEstimate} applied to $ c \rvert_{ \mathcal{I} } $, we estimate
	\begin{align*}
		\| \mathbf{b} - c \|_{ \ell^2( \mathbbm{ Z }^d) } 
			&\leq \| \mathbf{b} - c\rvert_{ \mathcal{I} } \|_{ \ell^2( \mathbbm{ Z }^d ) } + \| c - c\rvert_{ \mathcal{I} }\|_{ \ell^2( \mathbbm{ Z }^d) } \\
			&\leq (\beta + \eta_\infty) \sqrt{ \max(s - | \mathcal{S}_\beta|, 0) } + \eta_2 + \| c\rvert_{ \mathcal{I} } - c \rvert_{ \mathcal{S}_\beta } \|_{ \ell^2( \mathbbm{ Z }^d ) } + \| c - c\rvert_{ \mathcal{I} }\|_{ \ell^2( \mathbbm{ Z }^d) }.
	\end{align*}

	Since $ 1 + d $ SFTs are required, the number of $ f $ evaluations is $ \mathcal{O} \left( d \cdot P(s, M) \right) $ and the associated computational complexity is $ \mathcal{O} \left( d \cdot R(s, M) \right) $.
	The complexity of Lines~\ref{alg:PhaseEnc:BeginRecovery}--\ref{alg:PhaseEnc:EndRecovery} is $ \mathcal{O}(sd) $.
\end{proof}

\begin{rem}
	\label{rem:AssignOnlyToI}
	Since the only possible misassigned values of $ v_\omega $ contribute to coefficients in $ \mathbf{b} $ outside the chosen frequency set $ \mathcal{I} $ for which $ \Lambda(\z, M) $ is reconstructing, if it is possible to quickly (e.g., in $ \mathcal{O}(d) $ time) check a multivariate frequency's inclusion in $ \mathcal{I} $ (e.g., a hyperbolic cross), entries outside of $ \mathcal{I} $ in $ \mathbf{b} $ can be identified in the optional check on Line~\ref{alg:PhaseEnc:Check} and remain (correctly) unassigned.
	This has the effect of removing the $ (\beta + \eta_\infty) \sqrt{ \max(s - | \mathcal{S}_\beta |, 0) } $ term in the error bound while not increasing the computational complexity.
	Additionally, this outputs an approximation to $ (c\rvert_{ \mathcal{I} })_s^\mathrm{opt} $ which is supported only on our supplied frequency set $ \mathcal{I} $ as we may expect or prefer.
\end{rem}

We now apply Lemma~\ref{lem:PhaseEncRecovery} with the discrete sublinear-time SFT from Theorem~\ref{thm:1dDiscreteSFT} to give specific error bounds in terms of best $ s $-term approximation errors as well as detailed runtime and sampling complexities.

\begin{cor}[Algorithm~\ref{alg:PhaseEnc} with discrete sublinear-time SFT]
	\label{cor:PhaseEncRecoveryDiscreteSFT}
	Let $ N \geq 9 $.
	For $ \mathcal{I} \subset \mathcal{B}_N^d $ with reconstructing rank-1 lattice $ \Lambda(\z, M) $ and the function $ f \in \mathcal{W}(\mathbbm{ T }^d) \cap C( \mathbbm{ T }^d ) $, we consider applying Algorithm~\ref{alg:PhaseEnc} where each function sample may be corrupted by noise at most $ e_\infty \geq 0 $ in absolute magnitude. Using the discrete sublinear-time SFT algorithm $ \mathcal{A}_{ 2s, M }^\mathrm{disc} $ or $ \mathcal{A}_{ 2s, M }^\mathrm{disc, MC} $ with parameter $ 1 \leq r \leq \frac{M}{36} $, Algorithm~\ref{alg:PhaseEnc} will produce $ \mathbf{b} = (b_\k)_{ \k \in \mathcal{B}_N^d } $ a $ 2s $-sparse approximation of $ c $ satisfying the error estimate
	\begin{align*}
		\| \mathbf{b} &- c \|_2 & \\
				  &\leq \|c - c\rvert_{ \mathcal{I} }\|_2 + \left( 48 + 4N \right) \frac{\|c \rvert_{ \mathcal{I} } - \left( c \rvert_{ \mathcal{I} } \right) _s^\mathrm{opt} \|_1}{\sqrt{ s }} + \left( 188 + 16 N \right) \sqrt{ s } (\|f\|_\infty M^{ -r } + \|c - c\rvert_{ \mathcal{I} }\|_1 + e_\infty ) \\
	\end{align*}
	albeit with probability $ 1 - \sigma \in [0, 1) $ for the Monte Carlo version.
	The total number of evaluations of $ f $ and computational complexity will be
	\begin{equation*}
		\mathcal{O} \left( \frac{ds^2 r^{ 3/2 } \log^{ 11/2 }M}{\log s}  \right) \text{ or } \mathcal{O} \left( ds r^{ 3/2 } \log^{ 9/2 } M \log \left( \frac{dM}{\sigma}  \right) \right)
	\end{equation*}
	for $ \mathcal{A}_{ 2s, M }^\mathrm{ disc } $ or $ \mathcal{A}_{ 2s, M }^\mathrm{ disc, MC } $ respectively.
\end{cor}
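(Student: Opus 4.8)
The plan is to invoke the general recovery result Lemma~\ref{lem:PhaseEncRecovery} with the univariate routine $\mathcal{A}_{2s,M}^{\mathrm{disc}}$ (or its Monte Carlo variant) supplied by Theorem~\ref{thm:1dDiscreteSFT}, and then to specialize the abstract quantities $\tau,\eta_\infty,\eta_2,\beta$ of that lemma to the explicit constants furnished by the theorem. Writing $\delta := \frac{\|\F_M\,\a-(\F_M\,\a)_s^{\mathrm{opt}}\|_1}{2s}+2(\|\a\|_\infty M^{-r}+\|\mu'\|_\infty)$ for the effective error level seen by the underlying one-dimensional transform (where $\mu'$ is the additive noise on the restricted signal $f_{\mathcal{I}}$), Theorem~\ref{thm:1dDiscreteSFT} applied at sparsity $2s$ lets me take $\tau=12(1+\sqrt2)\delta$, $\eta_\infty=3\sqrt2\,\delta$, and $\eta_2=\|\F_M\,\a-(\F_M\,\a)_{2s}^{\mathrm{opt}}\|_2+38\,\|\F_M\,\a-(\F_M\,\a)_s^{\mathrm{opt}}\|_1/\sqrt s+152\sqrt s\,(\|\a\|_\infty M^{-r}+\|\mu'\|_\infty)$, with these bounds holding uniformly across the $d+1$ transforms run by Algorithm~\ref{alg:PhaseEnc}.

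First I would exploit the reconstructing property of $\Lambda(\z,M)$: by Lemmas~\ref{lem:DFTEqualsFT} and~\ref{lem:DFTAliasing} the vector $\F_M\,\a$ arising from $f_{\mathcal{I}}$ sampled along the lattice is merely a relabelling of $c\rvert_{\mathcal{I}}$ through the injection $\k\mapsto\k\cdot\z\bmod M$, so every best-term quantity transfers verbatim, namely $\|\F_M\,\a-(\F_M\,\a)_p^{\mathrm{opt}}\|_q=\|c\rvert_{\mathcal{I}}-(c\rvert_{\mathcal{I}})_p^{\mathrm{opt}}\|_q$. Since $\tau\ge 6(1+\sqrt2)\,\|c\rvert_{\mathcal{I}}-(c\rvert_{\mathcal{I}})_s^{\mathrm{opt}}\|_1/s\ge \|c\rvert_{\mathcal{I}}-(c\rvert_{\mathcal{I}})_s^{\mathrm{opt}}\|_1/s$, Lemma~\ref{lem:ErrorInTauSigFrequencies} forces $|\mathcal{S}_\tau|\le 2s$, verifying the counting hypothesis of Lemma~\ref{lem:PhaseEncRecovery}. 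The out-of-$\mathcal{I}$ energy together with the per-sample corruption is folded into $\mu'$ exactly as in~\eqref{eq:TruncationAsMeasurementNoise}, so the effective noise obeys $\|\a\|_\infty M^{-r}+\|\mu'\|_\infty\le \|f\|_\infty M^{-r}+\|c-c\rvert_{\mathcal{I}}\|_1+e_\infty=:E$, where the $M^{-r}$ term retains $\|f\|_\infty$ because the algorithm genuinely samples $f$.

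The crux is controlling $\beta=\max\!\big(\tau,\eta_\infty(1+2/\sin(\pi/N))\big)$. A short trigonometric estimate shows that for $N\ge 9$ one has $2/\sin(\pi/N)\ge 3+2\sqrt2=\tau/\eta_\infty-1$, so the second argument dominates and $\beta=\eta_\infty(1+2/\sin(\pi/N))$; this is exactly what forces the hypothesis $N\ge 9$. The sharper bound $\sin(\pi/N)\ge 3/N$ (also valid for $N\ge 9$) then gives $2/\sin(\pi/N)\le 2N/3$, whence $\beta\le\eta_\infty(1+2N/3)$, which is the origin of the linear-in-$N$ factors. Plugging $\tau,\eta_\infty,\eta_2,\beta$ into the conclusion of Lemma~\ref{lem:PhaseEncRecovery}, bounding $\sqrt{\max(2s-|\mathcal{S}_\beta|,0)}\le\sqrt{2s}$ and $\|c\rvert_{\mathcal{I}}-c\rvert_{\mathcal{S}_\beta}\|_2\le\|c\rvert_{\mathcal{I}}-(c\rvert_{\mathcal{I}})_{2s}^{\mathrm{opt}}\|_2+\beta\sqrt{2s}$ via Lemma~\ref{lem:ErrorInTauSigFrequencies}, and finally eliminating the two surviving $\ell^2$ best-$2s$-term errors through the standard tail estimate $\|\x-\x_{2s}^{\mathrm{opt}}\|_2\le\frac{1}{2\sqrt s}\|\x-\x_{s}^{\mathrm{opt}}\|_1$, I collect the $\delta$- and $\beta$-contributions to land on the coefficients $48+4N$ and $188+16N$. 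I expect the main obstacle to be precisely this constant bookkeeping: observing that the sharp constant $\tfrac{1}{2\sqrt s}$ (not $\tfrac{1}{\sqrt s}$) in the $\ell^2$-to-$\ell^1$ tail bound, together with the sharp estimate $2/\sin(\pi/N)\le 2N/3$, is exactly what is required to reach $48+4N$ and $16N$ rather than looser constants.

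The complexity is then immediate from the modular count in Lemma~\ref{lem:PhaseEncRecovery}: the $d+1$ calls to $\mathcal{A}_{2s,M}^{\mathrm{disc}}$ each cost $\mathcal{O}(s^2 r^{3/2}\log^{11/2}M/\log s)$ by Theorem~\ref{thm:1dDiscreteSFT}, and the recovery loop contributes $\mathcal{O}(sd)$, giving the deterministic estimate. For the Monte Carlo version I would run each of the $d+1$ transforms at failure probability $\sigma/(d+1)$; a union bound yields overall success probability $1-\sigma$, while each factor $\log\!\big(M/(\sigma/(d+1))\big)=\mathcal{O}(\log(dM/\sigma))$, producing the stated randomized runtime and sampling bound.
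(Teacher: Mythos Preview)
Your proposal is correct and follows essentially the same route as the paper: invoke Lemma~\ref{lem:PhaseEncRecovery} with the constants $\tau,\eta_\infty,\eta_2$ from Theorem~\ref{thm:1dDiscreteSFT}, identify $\F_M\,\a$ with $c\rvert_{\mathcal I}$ via the reconstructing lattice, use Lemma~\ref{lem:ErrorInTauSigFrequencies} for $|\mathcal S_\beta|\le 2s$ and for $\|c\rvert_{\mathcal I}-c\rvert_{\mathcal S_\beta}\|_2$, reduce the two residual $\ell^2$ best-$2s$-term errors to $\tfrac{1}{2\sqrt s}\|c\rvert_{\mathcal I}-(c\rvert_{\mathcal I})_s^{\mathrm{opt}}\|_1$, and then do the constant bookkeeping. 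The only cosmetic difference is that the paper bounds $\sin(\pi/N)\ge \tfrac{9\sin(\pi/9)}{N}$ and rounds up at the end, whereas your cruder bound $\sin(\pi/N)\ge 3/N$ lands directly on $48+4N$ and $188+16N$; your explicit check that $2/\sin(\pi/N)\ge 3+2\sqrt2$ for $N\ge 9$ (so that $\beta=\eta_\infty(1+2/\sin(\pi/N))$) is a detail the paper leaves implicit.
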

\begin{proof}
	For the definitions of $ \tau $ and $ \beta $ in Lemma~\ref{lem:PhaseEncRecovery} with associated values given by Theorem~\ref{thm:1dDiscreteSFT}, Lemma~\ref{lem:ErrorInTauSigFrequencies} applied with $ \x = c\rvert_{ \mathcal{I} } $ implies that $ \mathcal{S}_\beta $ can contain at most $ 2s $ elements, and we have the bound
	\begin{equation}
		\label{eq:LargestCoefficientsErrorCorollary}
		\| c\rvert_{ \mathcal{I} } - c\rvert_{ \mathcal{S}_\beta } \|_{ \ell^2( \mathbbm{ Z }^d) } \leq \| c\rvert_{ \mathcal{I} } - (c\rvert_{ \mathcal{I} })_{ 2s }^\mathrm{opt} \|_{ \ell^2( \mathbbm{ Z }^d )} + \beta \sqrt{ 2s } \leq \frac{\|c\rvert_{ \mathcal{I} } - (c\rvert_{ \mathcal{I} })_s^\mathrm{opt}\|_{ \ell^1( \mathbbm{ Z }^d) }}{2 \sqrt{ s }} + \beta \sqrt{ 2s },
	\end{equation}
	where the last inequality follows from \cite[Theorem 2.5]{Foucart2013} applied to $ c\rvert_{ \mathcal{I} } - (c\rvert_{ \mathcal{I} })_s^\mathrm{opt} $.
	Lemma~\ref{lem:PhaseEncRecovery} then holds with $ s $ replaced by $ 2s $ for the $ 2s $-sparse approximations given by $ \mathcal{A}_{ 2s, M }^\mathrm{disc} $ or $ \mathcal{A}_{ 2s, M }^\mathrm{disc} $ in Algorithm~\ref{alg:PhaseEnc}.

	Assuming $ N \geq 9 $, the specific values of $ \tau $ and $ \eta_\infty $ from Theorem~\ref{thm:1dDiscreteSFT} give
	\begin{equation*}
		\beta = \max \left( \tau, \eta_\infty \left( 1 + \frac{2}{\sin \left( \frac{\pi}{N} \right)}  \right) \right) = \eta_\infty \left( 1 + \frac{2}{\sin \left( \frac{\pi}{N} \right)}  \right) \leq \eta_\infty \left( 1 + \frac{2}{9 \sin \left( \frac{\pi}{9} \right)} N \right).
	\end{equation*}
	Using our bound \eqref{eq:TruncationAsMeasurementNoise} from treating the truncation error as measurement noise additionally accounting for any noise in our input bounded by $ e_\infty $ we obtain
	\begin{align*}
		\beta 
			&\leq \eta_\infty \left(1 + \frac{2}{9 \sin \left( \frac{\pi}{9}  \right)} N \right) \\
			&\leq 3 \sqrt{ 2 } \left( \frac{\| c\rvert_{ \mathcal{I} }- \left( c \rvert_{ \mathcal{I} } \right)_s^\mathrm{opt} \|_1 }{2s} + 2 (\| f \|_\infty M^{ -r } + \|c - c\rvert_{ \mathcal{I} }\|_1 + e_\infty) \right) \left( 1 + \frac{2}{9 \sin \left( \frac{\pi}{9}  \right)} N \right).
	\end{align*}
	Inserting the estimate for $ \|c\rvert_{ \mathcal{I} } - c\rvert_{ \mathcal{S}_\beta } \|_2 $ from \eqref{eq:LargestCoefficientsErrorCorollary}, our bound for $ \beta $ above, and the value for $ \eta_2 $ from Theorem~\ref{thm:1dDiscreteSFT} (where again we use \cite[Theorem 2.5]{Foucart2013}) into the recovery bound in Lemma~\ref{lem:PhaseEncRecovery} gives the final error estimate.

	In detail, let
	\begin{gather*}
		A = \frac{\|c\rvert_{ \mathcal{I} } - (c\rvert_{ \mathcal{I} })_s^\mathrm{opt}\|_1}{\sqrt{ s }}, \quad \delta = \left( \frac{A}{2\sqrt{ s }} + 2( \|f\|_\infty M^{ -r } + \|c - c\rvert_{ \mathcal{I} } \|_1 + e_\infty) \right).
	\end{gather*}
	Then
	\begin{gather*}
		\beta \leq 3 \sqrt{ 2 } \left( 1 + \frac{2}{9 \sin \left( \frac{\pi}{9} \right)} N \right) \delta,\quad
		\eta_2 \leq \frac{A}{2} + 76 \sqrt{ s } \delta,\quad
		\|c\rvert_{ \mathcal{I} } - c\rvert_{ \mathcal{S}_\beta } \|_2 \leq \frac{A}{2} + \sqrt{ 2 } \beta \sqrt{ s }.
	\end{gather*}
	Our error bound is then
	\begin{align*}
		\| \mathbf{b} - c \|_2
			&\leq \eta_2 + (\beta + \eta_\infty) \sqrt{ 2s } + \| c \rvert_{ \mathcal{I} } - c \rvert_{ \mathcal{S}_\beta } \|_2 + \| c - c\rvert_{ \mathcal{I} }\|_2 \\
			&\leq A + 76 \sqrt{ s } \delta + 6\left( 3 + \frac{4}{9 \sin \left( \frac{\pi}{9} \right)} N \right)\sqrt{ s }\delta + \| c - c\rvert_{ \mathcal{I} }\|_2 \\
			&= A + \left( 94 + \frac{8}{3 \sin \left( \frac{\pi}{9} \right) } N \right)\sqrt{ s } \delta + \| c - c\rvert_{ \mathcal{I} }\|_2 \\
			&= \left( 48 + \frac{4}{3 \sin \left( \frac{\pi}{9} \right)} N \right)A + \left( 188 + \frac{16}{3 \sin \left( \frac{\pi}{9} \right)} N \right) \sqrt{ s } ( \| f \|_\infty M^{ -r } + \|c - c\rvert_{ \mathcal{I} }\|_1 + e_\infty ) \\
			&\qquad + \| c - c\rvert_{ \mathcal{I} }\|_2\\
			&\leq \left( 48 + 4N \right) \frac{\|c \rvert_{ \mathcal{I} } - \left( c \rvert_{ \mathcal{I} } \right) _s^\mathrm{opt} \|_1}{\sqrt{ s }} + \left( 188 + 16 N \right) \sqrt{ s } (\|f\|_\infty M^{ -r } + \|c - c\rvert_{ \mathcal{I} }\|_1 + e_\infty ) \\
			&\qquad+ \|c - c\rvert_{ \mathcal{I} }\|_2.
	\end{align*}

	The change to the complexity of the random algorithm arises from distributing the probability of failure $ \sigma$ over the $ d + 1 $ SFTs in a union bound.
\end{proof}

Though the nonequispaced SFTs discussed in Theorem~\ref{thm:1dSFT} do not approximate the discrete Fourier transform and therefore do not alias the one-dimensional frequencies $ \k \cdot \z $ into frequencies in $ \mathcal{B}_M $, slightly modifying Algorithm~\ref{alg:PhaseEnc} to use SFTs with a larger bandwidth allows for the following recovery result.

\begin{cor}[Algorithm~\ref{alg:PhaseEnc} with nonequispaced sublinear-time SFT]
	\label{cor:PhaseEncRecoverySublinearSFT}
	For $ \mathcal{I} \subset \mathcal{B}_N^d $ with $ N \geq 6 $, fix the new bandwidth parameter $ \tilde M := 2 \max_{ \k \in \mathcal{I} } |\k \cdot \z| + 1 $.
	For $ \Lambda(\z, M) $, a reconstructing rank-1 lattice for $ \mathcal{I} $ with $ M \leq \tilde M $, and the function $ f \in \mathcal{W}(\mathbbm{ T }^d) \cap C( \mathbbm{ T }^d ) $, we consider applying Algorithm~\ref{alg:PhaseEnc} where each function sample may be corrupted by noise at most $ e_\infty \geq 0 $ in absolute magnitude with the following modifications:
	\begin{enumerate}
		\item use the sublinear-time SFT algorithm $ \mathcal{A}_{ 2s, \tilde M }^\mathrm{sub} $ or $ \mathcal{A}_{ 2s, \tilde M }^\mathrm{sub, MC} $ 
		\item and only check equality against $ \omega $ in Line~\ref{alg:PhaseEnc:Check} (rather than equivalence modulo $ M $),
	\end{enumerate}
	to produce $ \mathbf{b} = (b_\k)_{ \k \in \mathcal{B}_N^d } $ a $ 2s $-sparse approximation of $ c $ satisfying the error estimate
	\begin{align*}
		\| \mathbf{b} - c \|_{ \ell^2( \mathbbm{ Z }^d) }
			&\leq \left( 24 + 3N \right) \left[ \frac{\|c \rvert_{ \mathcal{I} } - \left( c \rvert_{ \mathcal{I} } \right) _s^\mathrm{opt} \|_1}{\sqrt{ s }} + \sqrt{ s } \|c - c\rvert_{ \mathcal{I} }\|_1 + \sqrt{ s } e_\infty \right] + \|c - c\rvert_{ \mathcal{I} }\|_2.
	\end{align*}
	albeit with probability $ 1 - \sigma \in [0, 1) $ for the Monte Carlo version.
	For $ \mathcal{A}_{ 2s, \tilde M }^\mathrm{sub} $ and $ \mathcal{A}_{ 2s, \tilde M }^\mathrm{sub, MC} $ respectively, the total number of evaluations of $ f $ and computational complexity will be
	\begin{equation*}
		\mathcal{O} \left( \frac{d s^2 \log^4 \tilde M}{\log s}  \right) \text{ or } \mathcal{O} \left( d s \log^3(\tilde M) \log \left( \frac{d \tilde M}{\sigma}  \right) \right).
	\end{equation*}
\end{cor}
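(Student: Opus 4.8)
The plan is to reuse the template of the proof of Corollary~\ref{cor:PhaseEncRecoveryDiscreteSFT}, feeding the nonequispaced SFT guarantees of Theorem~\ref{thm:1dSFT} into the general recovery bound of Lemma~\ref{lem:PhaseEncRecovery}. The one genuinely new ingredient is that $\mathcal{A}_{2s,\tilde M}^{\mathrm{sub}}$ approximates $\hat\a$, the restriction of $\hat a$ to $\mathcal{B}_{\tilde M}$, rather than a discrete Fourier transform, so no aliasing occurs and Lemma~\ref{lem:PhaseEncRecovery} cannot be invoked verbatim with the given lattice $\Lambda(\z,M)$. First I would reconcile this: for $\k\in\mathcal{I}$ the choice $\tilde M = 2\max_{\k\in\mathcal{I}}|\k\cdot\z|+1$ guarantees $\k\cdot\z\in\mathcal{B}_{\tilde M}$, and since injectivity of $\k\mapsto\k\cdot\z\bmod M$ (the reconstructing property of $\Lambda(\z,M)$) forces the integers $\k\cdot\z$ themselves to be distinct on $\mathcal{I}$, the lattice $\Lambda(\z,\tilde M)$ is reconstructing for $\mathcal{I}$ as well. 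Consequently the lattice function for $f_{\mathcal{I}}$ is bandlimited to $\mathcal{B}_{\tilde M}$, Lemma~\ref{lem:DFTEqualsFT} gives $\hat\a = \F_{\tilde M}\,\a$, and the nonzero entries of $\hat\a$ are exactly $\{c_\k(f)\}_{\k\in\mathcal{I}}$ reindexed by the injection $\k\mapsto\k\cdot\z$. This both justifies replacing the modular check in Line~\ref{alg:PhaseEnc:Check} by the strict equality $\k_\omega\cdot\z=\omega$ and lets me apply Lemma~\ref{lem:PhaseEncRecovery} with $M$ replaced by $\tilde M$, identifying $\hat\a$ with $c\rvert_{\mathcal{I}}$.

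Next I would substitute the values $\tau=(4+2\sqrt2)\delta$, $\eta_\infty=\sqrt2\,\delta$, and the $\eta_2$ from Theorem~\ref{thm:1dSFT}, exploiting two simplifications peculiar to this setting: bandlimitedness makes the truncation term $\|\hat a-\hat\a\|_1$ vanish, while the part of $f$ supported off $\mathcal{I}$ is absorbed into the measurement noise through \eqref{eq:TruncationAsMeasurementNoise}, so that $\|\mu'\|_\infty\le\|c-c\rvert_{\mathcal{I}}\|_1+e_\infty=:B$ and $\delta=\frac{\|c\rvert_{\mathcal{I}}-(c\rvert_{\mathcal{I}})_s^{\mathrm{opt}}\|_1}{s}+B$. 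Writing $A=\frac{\|c\rvert_{\mathcal{I}}-(c\rvert_{\mathcal{I}})_s^{\mathrm{opt}}\|_1}{\sqrt s}$ so that $\sqrt s\,\delta=A+\sqrt s\,B$, I would then evaluate $\beta=\max(\tau,\eta_\infty(1+2/\sin(\pi/N)))$. For $N\ge6$ the $\eta_\infty$-branch dominates (at $N=6$ one has $\sqrt2(1+4)>4+2\sqrt2$), and the monotonicity of $N\mapsto N\sin(\pi/N)$ gives $1/\sin(\pi/N)\le N/3$ with base point $N=6$, so $\beta\le\sqrt2(1+\tfrac{2N}{3})\delta$.

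Then I would assemble the bound exactly as in Corollary~\ref{cor:PhaseEncRecoveryDiscreteSFT}: Lemma~\ref{lem:ErrorInTauSigFrequencies} applied to $c\rvert_{\mathcal{I}}$ yields $|\mathcal{S}_\beta|\le 2s$ and, via the Stechkin estimate \cite[Theorem~2.5]{Foucart2013}, both $\|c\rvert_{\mathcal{I}}-c\rvert_{\mathcal{S}_\beta}\|_2\le\tfrac{A}{2}+\beta\sqrt{2s}$ and $\eta_2\le\tfrac{A}{2}+(8\sqrt2+6)\sqrt s\,\delta$ after the identification above. Substituting these into the recovery estimate of Lemma~\ref{lem:PhaseEncRecovery} (with $s\to2s$) collapses the $\beta$ and $\eta_\infty$ contributions to $(6+\tfrac{8N}{3})\sqrt s\,\delta$ and leaves $A+(8\sqrt2+12+\tfrac{8N}{3})\sqrt s\,\delta+\|c-c\rvert_{\mathcal{I}}\|_2$. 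Re-expanding $\sqrt s\,\delta=A+\sqrt s\,B$ produces separate coefficients $13+8\sqrt2+\tfrac{8N}{3}$ on $A$ and $12+8\sqrt2+\tfrac{8N}{3}$ on $\sqrt s\,B$, both of which are at most $24+3N$ precisely when $N\ge6$ (indeed the $A$-coefficient needs only $N\ge 24\sqrt2-33$); this gives the claimed estimate. The complexity follows from Lemma~\ref{lem:PhaseEncRecovery} by multiplying the per-SFT cost of Theorem~\ref{thm:1dSFT} at bandwidth $\tilde M$ by the $d+1$ transforms, with a union bound over the failure probabilities of the $d+1$ Monte Carlo runs turning $\log(\tilde M/\sigma)$ into $\log(d\tilde M/\sigma)$.

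The main obstacle is the first paragraph's reconciliation of the non-aliasing SFT with the aliasing-based statement of Lemma~\ref{lem:PhaseEncRecovery}; once $\Lambda(\z,\tilde M)$ is seen to be reconstructing for $\mathcal{I}$ and $\hat\a$ is identified with $c\rvert_{\mathcal{I}}$, the remainder is careful but routine constant-tracking, whose only delicate point is confirming that the $A$-coefficient $13+8\sqrt2+\tfrac{8N}{3}$ dips below $24+3N$ for $N\ge6$.
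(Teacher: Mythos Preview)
Your proposal is correct and follows essentially the same route as the paper's proof: reconcile the nonequispaced SFT with Lemma~\ref{lem:PhaseEncRecovery} by observing that $\tilde M$ is large enough for the truncated transform to coincide with a length-$\tilde M$ DFT (the paper phrases this slightly differently but equivalently), then substitute the Theorem~\ref{thm:1dSFT} values of $\tau,\eta_\infty,\eta_2$ and carry out the same constant-tracking via $A$ and $\delta$ to reach $(8\sqrt2+13+\tfrac{8N}{3})A+(8\sqrt2+12+\tfrac{8N}{3})\sqrt s(\|c-c\rvert_{\mathcal I}\|_1+e_\infty)\le(24+3N)[\cdots]$. Your explicit verification that the $\eta_\infty$-branch of $\beta$ dominates for $N\ge6$ and your observation that $\Lambda(\z,\tilde M)$ is itself reconstructing are details the paper leaves implicit, but otherwise the arguments match step for step.
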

\begin{proof}
	The bandwidth specified ensures that $ \mathcal{B}_{ \tilde M } \supset \{\k \cdot \z \mid \k \in \mathcal{I}\} $. 
	In the case where $ f $ is a trigonometric polynomial with $ \supp (c(f)) \subset \mathcal{I} $, so long as there exists some $ M $ such that $ \Lambda(\z, M) $ is reconstructing for $ \mathcal{I} $, the one-dimensional Fourier transforms truncated to $ \mathcal{B}_{ \tilde M } $ coincide with length $ \tilde M $ DFTs.
	Thus, we can view an approximation from the algorithm in Theorem~\ref{thm:1dSFT} as one of a length $ \tilde M $ DFT.
	The reasoning in the proofs of Lemma~\ref{lem:PhaseEncRecovery} and Corollary~\ref{cor:PhaseEncRecoveryDiscreteSFT} then holds with the SFT algorithms, parameters, numbers of samples, and complexities of Theorem~\ref{thm:1dSFT}.

	In detail, we first note that for $ N \geq 6 $, we have
	\begin{equation*}
		\beta = \eta_\infty \left( 1 + \frac{2}{\sin \left( \frac{\pi}{N}  \right)}  \right) \leq \eta_\infty \left( 1 + \frac{2}{ 6 \left( \sin\frac{\pi}{6} \right) } N \right) = \eta_\infty \left( 1 + \frac{2}{3} N \right).
	\end{equation*}
	Now let
	\begin{gather*}
		A = \frac{\|c\rvert_{ \mathcal{I} } - (c\rvert_{ \mathcal{I} })_s^\mathrm{opt}\|_1}{\sqrt{ s }}, \quad \delta = \left( \frac{A}{\sqrt{ s }} + \|c - c\rvert_{ \mathcal{I} } \|_1 + e_\infty \right).
	\end{gather*}
	Then
	\begin{gather*}
		\beta \leq \sqrt{ 2 } \left( 1 + \frac{2}{3} N \right) \delta,\quad
		\eta_2 \leq \frac{A}{2} + (8 \sqrt{ 2 } + 6) \sqrt{ s } \delta,\quad
		\|c\rvert_{ \mathcal{I} } - c\rvert_{ \mathcal{S}_\beta } \|_2 \leq \frac{A}{2} + \sqrt{ 2 } \beta \sqrt{ s }.
	\end{gather*}
	Our error bound is then
	\begin{align*}
		\| \mathbf{b} - c \|_2
			&\leq \eta_2 + (\beta + \eta_\infty) \sqrt{ 2s } + \| c \rvert_{ \mathcal{I} } - c \rvert_{ \mathcal{S}_\beta } \|_2 + \| c - c\rvert_{ \mathcal{I} }\|_2 \\
			&\leq A + (8 \sqrt{ 2 } + 6) \sqrt{ s } \delta + \left( 6 + \frac{8}{3} N \right)\sqrt{ s }\delta + \| c - c\rvert_{ \mathcal{I} }\|_2 \\
			&= A + \left( 8 \sqrt{ 2 } + 12 + \frac{8}{3} N \right)\sqrt{ s } \delta + \| c - c\rvert_{ \mathcal{I} }\|_2 \\
			&= \left( 8 \sqrt{ 2 } + 13 + \frac{8}{3} N \right)A + \left( 8 \sqrt{ 2 } + 12 + \frac{8}{3} N \right) \sqrt{ s } ( \|c - c\rvert_{ \mathcal{I} }\|_1 + e_\infty ) + \| c - c\rvert_{ \mathcal{I} }\|_2\\
			&\leq \left( 24 + 3N \right) \left( \frac{\|c \rvert_{ \mathcal{I} } - \left( c \rvert_{ \mathcal{I} } \right) _s^\mathrm{opt} \|_1}{\sqrt{ s }} + \sqrt{ s } (\|c - c\rvert_{ \mathcal{I} }\|_1 + e_\infty )\right) + \|c - c\rvert_{ \mathcal{I} }\|_2.
	\end{align*}
\end{proof}

\begin{rem}
	\label{rem:Larger1dBandwidth}
	As in \cite{Gross2020}, we can estimate $ \tilde M $ above with two different techniques:
	\begin{gather*}
		\tilde M = 1 + 2\max_{ \k \in \mathcal{I} } \left| \sum_{ \ell \in [d] } k_\ell z_\ell \right| \leq 1 + 2\sum_{ \ell \in [d] } |z_\ell| \max_{ \k \in \mathcal{I} } |k_\ell|  = \mathcal{O}(dNM),\\
		\tilde M = 1 + 2\max_{ \k \in \mathcal{I} } \left| \sum_{ \ell \in [d] } k_\ell z_\ell \right| \leq 1 + 2 \| \z \|_\infty \max_{ \k \in \mathcal{I} } \| \k \|_1 = \mathcal{O}\left(M \max_{ \k \in \mathcal{I} } \| \k \|_1\right).
	\end{gather*}
	The latter case is especially useful when $ \mathcal{I} $ is a subset of a known $ \ell^1 $ ball as it will provide a dimension independent upper bound on $ \tilde M $.
	Either of these upper bounds may then be used in practice to avoid having to estimate $ \tilde M $.

	That being said however, if one is willing to perform the one-time search through the frequency set $ \mathcal{I} $ to more accurately calculate $ \tilde M $, one can go even further to use the minimal bandwidth $ \tilde M' =  \max_{ \k \in \mathcal{I} } (\k \cdot \z) - \min_{ \k \in \mathcal{I} } (\k \cdot \z) + 1 $ so long as the function samples are properly modulated to shift the one-dimensional frequencies into $ \mathcal{B}_{ \tilde M' } $.
	For example, running $ \mathcal{A}_{ 2s, \tilde M' }^\mathrm{sub} $ or $ \mathcal{A}_{ 2s, \tilde M' }^\mathrm{sub, MC} $ on $ a(t) = \mathbbm{ e }^{ 2 \pi \mathbbm{ i } \phi t }f(t\z) $ and $ a^\ell(t) = \mathbbm{ e }^{ 2\pi \mathbbm{ i } \phi t }S_{ \ell, 1/N }f(t\z) $ with $ \phi = \left \lfloor \frac{\tilde M'}{2} \right \rfloor - \max_{ \k \in \mathcal{I} }(\k \cdot \z) $ is acceptable so long as this shift is accounted for in the frequency check on Line~\ref{alg:PhaseEnc:Check}.
	Of course, these improvements will only have the effect of reducing the logarithmic factors in the computational complexity.
\end{rem}

\subsection{Two-dimensional DFT technique}
\label{sub:2dDFT}

Below, we will consider a method for recovering frequencies which, rather than shifting one dimension of the multivariate periodic function $ f $ at a time, leaves one dimension of $ f $ out at a time.
We will fix one dimension $ \ell \in [d] $ of $ f $ at equispaced nodes over $ \mathbbm{T} $ and apply a lattice SFT to the other $ d - 1 $ components. 
Applying a standard FFT to the results will produce a two-dimensional DFT.
The indices corresponding to the standard FFT will represent frequency components in dimension $ \ell $ while the indices corresponding to the lattice SFT will be used to synchronize with known one-dimensional frequencies $ \k \cdot \z \bmod M $.
The approach is summarized in the following lemma and in Algorithm~\ref{alg:2dDFT}.

\begin{lem}
	\label{lem:2d4supportFinding}
	Fix some finite multivariate frequency set $ \mathcal{I} \subset \mathcal{B}_N^d $, let $ \Lambda(\z, M) $ be a reconstructing rank-1 lattice for $ \{\k - k_\ell \mathbf{e}_\ell \mid \k \in \mathcal{I}\} $, and assume that $ f $ has Fourier support $ \supp (c) \subset \mathcal{I} $.
	Fixing one dimension $ \ell \in [d] $, and writing the generating vector as $ \z = (z_\ell, \z_\ell') \in \mathbbm{ Z }^d $, define the polynomials 
	\begin{equation*}
		a_j^\ell(t) := f \left( \frac{j}{N}, t \z_\ell' \right) \text{ for all $ j \in [N] $,} 
	\end{equation*}
	that is, fix coordinate $ \ell $ at $ j / N $ and restrict the remaining coordinates to dimensions $ [d] \setminus \{\ell\} $ of the rank-1 lattice.
	Then for all one-dimensional frequencies $ \omega \in [M] $,
	\begin{equation*}
		\left(\F_M \, \a_j^\ell\right)_{ \omega } = 
		\begin{cases}
			\sum\limits_{ \substack{h_\ell \in \mathcal{B}_N \text{ s.t.} \\(h_\ell, \k_\ell') \in \mathcal{I}} }\mathbbm{ e }^{ 2 \pi \mathbbm{ i } j h_\ell/N } \, c_{ (h_\ell, \k_\ell') }(f) &\text{if there exists $ \k \in \mathcal{I} $ with $ \omega \equiv \k_\ell' \cdot \z_\ell' \imod{M} $}, \\
			0 & \text{otherwise}.
		\end{cases}
	\end{equation*}
    Moreover, defining the matrix $ \mathbf{A}^\ell = \left( \left(\F_M \, \a_j^\ell \right)_{ \omega }\right)_{ j \in [N], \omega \in [M] } $, we have
	\begin{equation*}
		\left( \F_N \, \mathbf{A}^\ell\right)_{ k_\ell \bmod N, \k_\ell' \cdot \z_\ell' \bmod M } = c_\k(f) \text{ for all } \k \in \mathcal{I},
	\end{equation*}
	and the remaining entries of the matrix $ \F_N \, \mathbf{A}^\ell \in \C^{N\times M} $ are zero.
\end{lem}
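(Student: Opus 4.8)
The plan is to evaluate the two one-dimensional transforms in sequence — first the length-$M$ DFT in the lattice variable $t$, then the length-$N$ DFT in the fixed-coordinate index $j$ — and to deploy exactly one structural fact at each stage: aliasing (Lemma~\ref{lem:DFTAliasing}) for the first transform, and the reconstructing/injectivity hypothesis for the collapse of the aliased sum.

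First I would expand $f$ in its Fourier series and substitute $\x = (j/N,\, t\z_\ell')$ to read off the univariate Fourier series of $a_j^\ell$. Grouping terms by the value of their one-dimensional frequency $\h_\ell' \cdot \z_\ell'$, the coefficient of $\mathbbm{e}^{2\pi\mathbbm{i}\omega t}$ is $\sum_{\h \in \mathcal{I},\, \h_\ell'\cdot\z_\ell' = \omega} c_\h(f)\,\mathbbm{e}^{2\pi\mathbbm{i}h_\ell j/N}$, where only $\h \in \mathcal{I}$ contribute because $\supp(c)\subset\mathcal{I}$. Applying Lemma~\ref{lem:DFTAliasing} then yields $(\F_M\,\a_j^\ell)_\omega$ as the same expression but summed over all $\h \in \mathcal{I}$ with $\h_\ell'\cdot\z_\ell' \equiv \omega \imod{M}$.

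The crucial step — the one I expect to carry the weight of the argument — is invoking the hypothesis that $\Lambda(\z,M)$ is reconstructing for $\{\k - k_\ell\mathbf{e}_\ell \mid \k \in \mathcal{I}\}$. On this set the map $m_{\z,M}$ reduces to $\k_\ell' \mapsto \k_\ell'\cdot\z_\ell' \bmod M$, since the $\ell$-th coordinate is zeroed out, so injectivity says that all $\h \in \mathcal{I}$ whose projection satisfies $\h_\ell'\cdot\z_\ell' \equiv \omega$ share one common value $\k_\ell'$. This is precisely what rules out aliasing collisions between distinct projected frequencies and lets me rewrite the congruence-indexed sum as a sum over only the $\ell$-th coordinate $h_\ell$ with $(h_\ell,\k_\ell')\in\mathcal{I}$, producing the first displayed formula (and the value $0$ when no such $\k$ exists, the sum then being empty).

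Finally I would apply $\F_N$ along the $j$-index to each column of $\mathbf{A}^\ell$, so that $(\F_N\,\mathbf{A}^\ell)_{p,\omega} = \frac{1}{N}\sum_{j\in[N]}(\mathbf{A}^\ell)_{j,\omega}\,\mathbbm{e}^{-2\pi\mathbbm{i}pj/N}$. Substituting the formula above and swapping the finite sums, the inner sum $\frac{1}{N}\sum_{j\in[N]}\mathbbm{e}^{2\pi\mathbbm{i}(h_\ell - p)j/N}$ is the orthogonality relation $\delta_{0,(h_\ell - p \bmod N)}$; because $\mathcal{B}_N$ is a complete residue system modulo $N$ and $\mathcal{I}\subset\mathcal{B}_N^d$ forces $h_\ell, p \in \mathcal{B}_N$, there is no aliasing in this direction and only the term $h_\ell = p$ survives. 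Hence $(\F_N\,\mathbf{A}^\ell)_{p,\omega} = c_{(p,\k_\ell')}(f)$, which equals $c_\k(f)$ upon taking $p = k_\ell \bmod N$ and $\omega = \k_\ell'\cdot\z_\ell' \bmod M$. For the vanishing of the remaining entries I would observe that a nonzero value forces $(p,\k_\ell')\in\mathcal{I}$, so the index is already of the stated form; combined with the injectivity of $\k\mapsto(k_\ell\bmod N,\, \k_\ell'\cdot\z_\ell'\bmod M)$ on $\mathcal{I}$ (from the same reconstructing hypothesis together with $k_\ell\in\mathcal{B}_N$), this shows every entry not indexed by some $\k\in\mathcal{I}$ is zero.
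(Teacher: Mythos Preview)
Your proof is correct and follows essentially the same line as the paper's. The only cosmetic difference is that you package the first DFT computation as an appeal to Lemma~\ref{lem:DFTAliasing}, whereas the paper simply writes out the DFT sum and collapses it via the orthogonality relation $\frac{1}{M}\sum_{i\in[M]}\mathbbm{e}^{2\pi\mathbbm{i}(\h_\ell'\cdot\z_\ell'-\omega)i/M}=\delta_{0,(\h_\ell'\cdot\z_\ell'-\omega\bmod M)}$; both routes arrive at the same congruence-indexed sum and then use the reconstructing hypothesis and the second DFT identically.
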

\begin{proof}
	Using the Fourier series representation of $ f $, we have
	\begin{equation*}
		a_j^\ell(t) := \sum_{ \k \in \mathcal{I} } c_\k(f) \, \mathbbm{ e }^{ 2\pi \mathbbm{ i } \left( \frac{j k_\ell}{N} +   \k_\ell' \cdot \z_\ell't\right) }.
	\end{equation*}
	We calculate for $ \omega \in [M] $ 
	\begin{align*}
		\left( \F_M \, \a_j^\ell \right)_{ \omega }
			&= \frac{1}{M} \sum_{ i \in [M] } \sum_{ \h \in \mathcal{I} } \mathbbm{ e }^{ \frac{ 2 \pi \mathbbm{ i } j h_\ell }{N} } \, c_\h(f) \, \mathbbm{ e }^{ \frac{ 2 \pi \mathbbm{ i } (\h_\ell' \cdot \z_\ell' - \omega) i }{M} }\\
			&= \sum_{ \h \in \mathcal{I} } \mathbbm{ e }^{ \frac{2 \pi \mathbbm{ i } j h_\ell}{N} } \, c_\h(f) \, \delta_{ 0, (\h_\ell' \cdot \z_\ell' - \omega \bmod M)} \\
			&= \sum_{\substack{h_\ell \in \mathcal{B}_N \text{ s.t.}\\(h_\ell, \k_\ell') \in \mathcal{I}}}\mathbbm{ e }^{ \frac{2 \pi \mathbbm{ i } j h_\ell}{N} } \, c_{ (h_\ell, \k_\ell') }(f),
	\end{align*}
	when $ \k \in \mathcal{I} $ is such that $ \k_\ell' \cdot \z_\ell' \equiv \omega \imod{M} $, and clearly is zero when no such $ \k \in \mathcal{I} $ exists.
	Note that the final equality uses that $ \Lambda(\z, M) $ is a reconstructing rank-1 lattice for $ \{\k - k_\ell \mathbf{ e }_\ell \mid \k \in \mathcal{I} \} $.
	Applying $ \F_N $ to $ \mathbf{A}^\ell $ then gives
	\begin{align*}
		\left( \F_N \, \mathbf{A}^\ell \right)_{ k_\ell \bmod N,\, \k_\ell' \cdot \z_\ell' \bmod M }
			&= \frac{1}{N} \sum_{ j \in [N] } \sum_{ h_\ell \in \mathcal{B}_N }c_{ (h_\ell, \k_\ell') }(f) \, \mathbbm{ e }^{ \frac{2 \pi \mathbbm{ i } (h_\ell - k_\ell \bmod N)j}{N} } = c_\k(f).
	\end{align*}

\end{proof}

\begin{algorithm}
	\caption{Frequency Index Recovery by Two Dimensional DFT}
	\label{alg:2dDFT}
	\begin{algorithmic}[1]
		\REQUIRE A multivariate periodic function $ f \in \mathcal{W}(\mathbbm{ T }^d) \cap C(\mathbbm{ T }^d) $ (from which we are able to obtain potentially noisy samples), a multivariate frequency set $ \mathcal{I} \subset \mathcal{B}_N^d $, a rank-1 lattice $ \Lambda(\z, M) $ which is reconstructing for $ \mathcal{I} $ and  $\left\{ \k - k_\ell \mathbf{ e }_\ell \mid \k \in \mathcal{I} \right\} $ for all $ \ell \in [d] $, and an SFT algorithm $ \mathcal{A}_{ s, M } $.
		\ENSURE Sparse coefficient vector $ \mathbf{b} = (b_\k)_{ \k \in \mathcal{B}_N^d } $ (optionally supported on $ \mathcal{I} $, see Line~\ref{alg:2dDFT:Check}), an approximation to $ \left( c\rvert_{ \mathcal{I} } \right)_s^\mathrm{opt} $.
		\STATE Apply $ \mathcal{A}_{ s, M} $ to the univariate restriction of $ f $ to the lattice, $ a(t) := f(t \z) $, to produce $ \v := \mathcal{A}_{ s, M } a $, a sparse approximation of $ \F_M \, \a \in \C^M $. \label{alg:2dDFT:LatticeSFT}
		\FORALL{$\ell \in [d]$} \label{alg:2dDFT:BeginSFTFFTCombo}
			\FORALL{$j \in [N]$}
			\STATE Apply $ \mathcal{A}_{ s, M } $ to $ a_j^\ell(t) := f( \frac{j}{N}, t \z_\ell') $ to produce $ \v_j^\ell := \mathcal{A}_{ s, M } a_j^\ell$, a sparse approximation of $ \F_M \, \a_j^\ell $. \label{alg:2dDFT:ShiftedLatticeSFT}
				\STATE Row $ j $ of $ \mathbf{V}^\ell \gets \v_j^\ell $.
			\ENDFOR
			\FORALL{nonzero columns $ \omega $ of $ \mathbf{V}^\ell $}
				\STATE Apply $ \F_N \, $ to column $ \omega $ of $ \mathbf{V}^\ell $ to produce $ \F_N \, \mathbf{V}^\ell $. \label{alg:2dDFT:ColumnDFT}
			\ENDFOR
		\ENDFOR \label{alg:2dDFT:EndSFTFFTCombo}
		\STATE $ \mathbf{b} \gets \mathbf{0} $  
		\FORALL{$\omega \in \supp (\v)$}
			\FORALL{$\ell \in [d]$}
			\STATE$ ((k_\omega)_\ell, \sim) \gets \arg \min \{ |v_\omega - (\F_N \, \mathbf{V}^\ell)_{ h, \omega' }| \mid (h, \omega') \in \mathcal{B}_N \times [M] \text{ with } h z_\ell + \omega' \equiv \omega \imod{M} \} $  \label{alg:2dDFT:Match}
			\ENDFOR
			\IF{$\k_\omega \cdot \z \equiv \omega \imod{M}$ (and optionally $ \k_\omega \in \mathcal{I}$)} \label{alg:2dDFT:Check}
				\STATE $ b_{ \k_\omega } \gets b_{ \k_\omega } + v_\omega $ 
			\ENDIF
		\ENDFOR
	\end{algorithmic}
\end{algorithm}

\begin{rem}
	We bring special attention to the fact that Algorithm~\ref{alg:2dDFT} requires as input a rank-1 lattice $ \Lambda(\z, M) $ which is reconstructing for not only $ \mathcal{I} $, but also the projections of $ \mathcal{I} $ of the form $ \{\k - k_\ell \mid \k \in \mathcal{I} \} $ for any $ \ell \in [d] $.
	For frequency sets $ \mathcal{I} $ which are downward closed, that is, if $ \mathcal{I} $ is such that for any $ \k \in \mathcal{I} $ and $ \h \in \mathbbm{ Z }^d $, $ |\h| \leq |\k| $ component-wise implies that $ \h \in \mathcal{I} $, any reconstructing rank-1 lattice for $ \mathcal{I} $ is necessarily one for the considered projections as well.
	Thus, for many frequency spaces of interest, e.g., hyperbolic crosses (cf.\ Remarks~\ref{rem:AssignOnlyToI} and \ref{rem:Larger1dBandwidth} as well as Section~\ref{sec:numerics} below), any reconstructing rank-1 lattice for $ \mathcal{I} $ will suffice as input to Algorithm~\ref{alg:2dDFT}.
\end{rem}

\subsubsection{Analysis of Algorithm~\ref{alg:2dDFT}}

\begin{lem}[General recovery result for Algorithm~\ref{alg:2dDFT}.]
	\label{lem:2dDFTRecovery}
	Let $ f $, $ \mathcal{I} $, and $ \Lambda(\z, M) $ be as specified in the input to Algorithm~\ref{alg:2dDFT}.
	Additionally, let $ \mathcal{A}_{ s, M } $ be a noise-robust SFT algorithm satisfying the same constraints as in Lemma~\ref{lem:PhaseEncRecovery} with parameters $ \tau $ and $ \eta_\infty $ holding uniformly for each SFT performed in Algorithm~\ref{alg:2dDFT}.

	Collect the $ \tau $-significant frequencies of $ f $ into the set $ \mathcal{S}_\tau := \{ \k \in \mathcal{I} \mid |c_\k (f) | > \tau \} $ and assume that $ |\supp(c) \cap \mathcal{S}_\tau| \leq s $.
	Then Algorithm~\ref{alg:2dDFT} (ignoring the optional check on Line~\ref{alg:2dDFT:Check}) will produce an $ s $-sparse approximation of the Fourier coefficients of $ f $ satisfying the error estimate
	\begin{equation*}
		\left\| \mathbf{b} - c \right\|_{ \ell^2( \mathbbm{ Z }^d )} \leq \eta_2 + (4\tau + \eta_\infty) \sqrt{ \max(s - | \mathcal{S}_{ 4\tau } |, 0) } + \|c\rvert_{ \mathcal{I} } - c\rvert_{ \mathcal{S}_{ 4\tau } }\|_{ \ell^2( \mathbbm{ Z }^d ) } + \| c - c\rvert_{ \mathcal{I} }\|_{ \ell^2( \mathbbm{ Z }^d) },
	\end{equation*}
	requiring $ \mathcal{O} \left( d N \cdot P(s, M) \right) $ total evaluations of $ f $, in $ \mathcal{O}\left(d N(R(s, M) + s N \log N) \right) $ total operations.
\end{lem}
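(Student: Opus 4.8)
The plan is to mirror the structure of the proof of Lemma~\ref{lem:PhaseEncRecovery}, isolating the two genuinely new ingredients — the column FFTs and the nearest-value matching on Line~\ref{alg:2dDFT:Match} — as the only places where a different argument is needed. First I would reduce to the case where $f$ is a trigonometric polynomial with $\supp(c)\subset\mathcal{I}$ by writing $f = f_\mathcal{I} + f_{\mathbbm{Z}^d\setminus\mathcal{I}}$ and absorbing $f_{\mathbbm{Z}^d\setminus\mathcal{I}}$ together with the measurement noise into an effective noise $\mu'$, exactly as in \eqref{eq:TruncationAsMeasurementNoise}; this lets the final bound pick up the $\|c - c\rvert_{\mathcal{I}}\|_2$ term and lets $\tau,\eta_\infty,\eta_2$ be read relative to $\mu'$. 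In the polynomial case, Lemma~\ref{lem:2d4supportFinding} supplies the exact noiseless skeleton: $(\F_M\,\a)_{\k\cdot\z\bmod M} = c_\k(f)$ and $(\F_N\,\mathbf{A}^\ell)_{k_\ell\bmod N,\ \k_\ell'\cdot\z_\ell'\bmod M} = c_\k(f)$, with all other entries of $\F_N\,\mathbf{A}^\ell$ equal to zero.

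A key observation I would use is that Algorithm~\ref{alg:2dDFT} never alters the recovered \emph{values}: every assignment has the form $b_{\k_\omega} \gets b_{\k_\omega} + v_\omega$, where $v_\omega$ comes from the single accurate lattice SFT on Line~\ref{alg:2dDFT:LatticeSFT}. Thus the two-dimensional DFTs serve only to determine the frequency index $\k_\omega$ to which each $v_\omega$ is assigned, and the whole burden of the new argument is to show these indices are chosen correctly for the significant frequencies. Concretely, I would fix $\k\in\mathcal{S}_{4\tau}$, note that $\omega := \k\cdot\z\bmod M\in\supp(\v)$ with $|v_\omega - c_\k(f)|\le\eta_\infty$, and argue that Line~\ref{alg:2dDFT:Match} returns $(k_\omega)_\ell = k_\ell$ for every $\ell$. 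The clean half is the exclusion of spurious matches: any competing pair $(h',\omega'')$ with $h'z_\ell + \omega''\equiv\omega\imod{M}$ and nonzero true value $(\F_N\,\mathbf{A}^\ell)_{h',\omega''}$ would, by Lemma~\ref{lem:2d4supportFinding}, correspond to a frequency $\k^*\in\mathcal{I}$ with $\k^*\cdot\z\equiv\omega\imod{M}$; since $\Lambda(\z,M)$ is reconstructing for $\mathcal{I}$, this forces $\k^*=\k$ and hence $h'=k_\ell$. So every \emph{incorrect} candidate has true two-dimensional value $0$.

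The quantitative half, and what I expect to be the main obstacle, is controlling the size $E$ of the column-FFT perturbation $(\F_N\,\mathbf{V}^\ell)_{h,\omega'} - (\F_N\,\mathbf{A}^\ell)_{h,\omega'}$. Writing this as $\frac1N\sum_j(\mathbf{V}^\ell_{j,\omega'} - \mathbf{A}^\ell_{j,\omega'})\mathbbm{e}^{-2\pi\mathbbm{i}hj/N}$ and splitting the rows into those whose entry was recovered (error $\le\eta_\infty$) and those set to $0$, I would want to conclude $E\le\tau$; combined with $|v_\omega|\ge|c_\k(f)|-\eta_\infty > 4\tau-\eta_\infty$, the correct candidate then has distance $\le\eta_\infty + E$ while every spurious one has distance $\ge|v_\omega| - E > 4\tau-\eta_\infty-E$, and since $\eta_\infty<\tau$ the correct candidate strictly wins. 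The difficulty is that an unrecovered row is only guaranteed below $\tau$ when the SFT on $a_j^\ell$ stays within its sparsity budget: a priori some $a_j^\ell$ could have more than $s$ (resp. $2s$) entries above $\tau$, so that a large entry is dropped and then contaminates the column FFT once $\F_N$ mixes the rows. Resolving this — either by bounding how many columns can be simultaneously energetic in a given row (e.g.\ via Parseval, $\sum_{j\in[N]}|(\F_M\,\a_j^\ell)_{\omega'}|^2 = N\sum_{h_\ell}|c_{(h_\ell,\k_\ell')}(f)|^2$, together with the injectivity of the projected lattice map), or by folding the dropped mass into the best-$s$-term tail — is where the real work lies and is the one step that goes beyond the template of Lemma~\ref{lem:PhaseEncRecovery}.

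Once correct identification on $\mathcal{S}_{4\tau}$ is in hand, the closing $\ell^2$ estimate is essentially that of \eqref{eq:PolyPhaseEncEstimate}: the values $v_\omega$ whose $\omega$ is not associated to $\mathcal{S}_{4\tau}$ each satisfy $|v_\omega|\le 4\tau+\eta_\infty$ (else the corresponding $c_\k$ would exceed $4\tau$), contributing $(4\tau+\eta_\infty)\sqrt{\max(s-|\mathcal{S}_{4\tau}|,0)}$; the first SFT's $\ell^2$ guarantee contributes $\eta_2$; and the remaining terms come from $\|c\rvert_{\mathcal{I}} - c\rvert_{\mathcal{S}_{4\tau}}\|_2$ and the truncation $\|c - c\rvert_{\mathcal{I}}\|_2$. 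The complexity is then a routine tally: the one SFT on Line~\ref{alg:2dDFT:LatticeSFT} plus the $dN$ SFTs in the double loop give $\mathcal{O}(dN\,P(s,M))$ evaluations and $\mathcal{O}(dN\,R(s,M))$ operations, the at most $\mathcal{O}(sN)$ nonzero columns per dimension each cost an $\mathcal{O}(N\log N)$ FFT for a total of $\mathcal{O}(dsN^2\log N)$, and the matching loop inspects $N$ candidate pairs per $(\omega,\ell)$ for a negligible $\mathcal{O}(dsN)$, yielding the claimed $\mathcal{O}(dN(R(s,M)+sN\log N))$.
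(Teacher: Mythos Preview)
Your plan is essentially the paper's proof, and all of your steps --- the reduction to $\supp(c)\subset\mathcal{I}$ via \eqref{eq:TruncationAsMeasurementNoise}, the observation that only the index assignment (not the value) is at stake, the reconstructing-lattice argument ruling out spurious $\k^*\in\mathcal{I}$, the final $\ell^2$ split, and the complexity tally --- match the paper's argument.

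The one place you diverge is the column-FFT error bound $E\le\tau$, which you flag as ``the main obstacle'' and propose to attack via Parseval or tail-folding. The paper obtains this bound in one line, directly from the contrapositive of the SFT's recovery guarantee (property~(1) in Lemma~\ref{lem:PhaseEncRecovery}): since every entry of $\F_M\,\a_j^\ell$ exceeding $\tau$ is recovered, any entry \emph{not} recovered has magnitude at most $\tau$, so the rowwise error is $\le\eta_\infty$ where recovered and $\le\tau$ where zero, hence $\le\tau$ throughout, and the column average inherits this bound. Your sparsity-budget worry --- that some $a_j^\ell$ might have more than $s$ entries above $\tau$, causing a large entry to be dropped --- is exactly what the lemma's hypothesis ``$\tau$ and $\eta_\infty$ hold uniformly for each SFT performed'' is there to absorb: by assumption property~(1) applies to every $a_j^\ell$, so the contrapositive is valid for each. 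No extra work is needed at the level of the general lemma; the budget check, to the extent one is needed, is deferred to the corollaries, where the specific $\tau$ from Theorems~\ref{thm:1dSFT} and~\ref{thm:1dDiscreteSFT} involves the best-$s$-term tail of the input and Lemma~\ref{lem:ErrorInTauSigFrequencies} then guarantees at most $2s$ entries exceed it.

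One small step the paper includes that you do not mention explicitly: before the matching argument, one must check that the correct column $\k_\ell'\cdot\z_\ell'\bmod M$ is actually among the nonzero columns of $\mathbf{V}^\ell$ (else it never gets FFT'd). The paper does this by the averaging inequality $|c_\k(f)|=|(\F_N\mathbf{A}^\ell)_{k_\ell,\,\k_\ell'\cdot\z_\ell'}|\le\max_j|(\F_M\,\a_j^\ell)_{\k_\ell'\cdot\z_\ell'}|$, so $\k\in\mathcal{S}_\tau$ forces at least one row to exceed $\tau$ and hence be recovered.
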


\begin{proof}
	We begin by assuming that $ f $ is a trigonometric polynomial with $ \supp (c(f)) \subset \mathcal{I} $.
	Since $ \Lambda(\z, M) $ is a reconstructing rank-1 lattice for $ \mathcal{I} $, the DFT-aliasing ensures that Line~\ref{alg:2dDFT:LatticeSFT} of Algorithm~\ref{alg:2dDFT} will return approximate coefficients uniquely corresponding to all $ \tau $-significant frequencies $ \k \in \mathcal{S}_\tau $ which we can label $ v_{ \k \cdot \z \bmod M } $.
	Additionally, Line~\ref{alg:2dDFT:ShiftedLatticeSFT} recovers approximations to all $ \tau $-significant frequencies of $ \F_M \, \a_j^\ell $ which have the form given in Lemma~\ref{lem:2d4supportFinding}.
	In particular, if $ \k \in \mathcal{S}_\tau $, we have
	\begin{align*}
		\tau < |c_\k (f)|
			&= \left| \left( \F_N \, \mathbf{A}^\ell \right)_{ k_\ell \bmod N, \k_\ell' \cdot \z_\ell' \bmod M } \right|\\
			&= \left| \frac{1}{N} \sum_{ j \in [N] } \left( \F_M \, \a_j^\ell \right)_{ \k_\ell' \cdot \z_\ell' \bmod M } \mathbbm{ e }^{ \frac{ -2 \pi \mathbbm{ i } j k_\ell \bmod N }{N} } \right|\\
			&\leq \frac{1}{N} \sum_{ j \in [N] } \left| \left( \F_M \, \a_j^\ell \right)_{ \k_\ell' \cdot \z_\ell' \bmod M } \right| \\
			&\leq \max_{ j \in [N] } \left| (\F_M \, \a_j^\ell)_{ \k_\ell' \cdot \z_\ell' \bmod M } \right|.
	\end{align*}
	Thus, there exists at least one $ \F_M \, \a_j^\ell $ with $ \k_\ell' \cdot \z_\ell' \bmod M $ recovered as a $ \tau $-significant frequency in the SFT of Line~\ref{alg:2dDFT:ShiftedLatticeSFT}, and $ \k_\ell' \cdot \z_\ell' \bmod M $ will be a nonzero column in $ \mathbf{V}^\ell $ for all $ \k \in \mathcal{S}_\tau $.

	Analyzing these SFTs in more detail for any $ \k \in \mathcal{I} $ such that $ \k_\ell' \cdot \z_\ell' \bmod M $ is a nonzero column of $ \mathbf{V}^\ell $, we write
	\begin{equation*}
		\left( \v_{ j }^\ell \right)_{ \k_\ell' \cdot \z_\ell' \bmod M } = \left( \F_M \, \a_j^\ell \right)_{ \k_\ell' \cdot \z_\ell' \bmod M } + \left( \eta_j^\ell \right)_{ \k_\ell' \cdot \z_\ell' \bmod M }
	\end{equation*}
	where, by the $ \ell^\infty $ and recovery guarantees for $ \mathcal{A}_{ s, M } $, the error satisfies
	\begin{equation*}
		\left| \left( \eta_j^\ell \right)_{ \k_\ell' \cdot \z_\ell' \bmod M } \right| \leq
			\begin{cases}
				\eta_\infty &\text{if } \left( \v_j^\ell \right)_{ \k_\ell' \cdot \z_\ell' \bmod M } \neq 0\\
				\tau &\text{if } \left( \v_j^\ell \right)_{ \k_\ell' \cdot \z_\ell' \bmod M } = 0
			\end{cases}
			\leq \tau.
	\end{equation*}
	Thus, in the application of $ \F_N $ to column $ \k_\ell' \cdot \z_\ell' \bmod M $ of $ \mathbf{V}^\ell $, we have
	\begin{align*}
		\left( \F_N \, \mathbf{V}^\ell \right)&_{ k_\ell \bmod N, \k_\ell' \cdot \z_\ell' \bmod M } \\
			&= \left( \F_N \, \mathbf{A}^\ell \right)_{ k_\ell \bmod N, \k_\ell' \cdot \z_\ell' \bmod M } + \left( \F_N \, \left( \left( \eta_j^\ell \right)_{ \k_\ell' \cdot \z_\ell' \bmod M }  \right)_{ j \in [N] }  \right)_{ k_\ell \bmod N } \\
			&=: c_\k(f) + \eta_\k^\ell
	\end{align*}
	with
	\begin{equation*}
		|\eta_\k^\ell|
			= \left| \frac{1}{N} \sum_{ j \in [N] } \left( \eta_j^\ell \right)_{ \k_\ell' \cdot \z_\ell' \bmod M } \mathbbm{ e }^{ \frac{- 2 \pi \mathbbm{ i } j k_\ell \bmod N}{N}  } \right| 
			\leq \max_{ j \in [N] } \left| \left( \eta_j^\ell \right)_{ \k_\ell' \cdot \z_\ell' \bmod M } \right| \leq \tau.
	\end{equation*}
	These same calculations apply to the computed columns of $ \F_N \, \mathbf{V}^\ell $ which do not correspond to values of $ \k_\ell' \cdot \z_\ell' \bmod M $ for some $ \k \in \mathcal{I} $ since we assume $ \supp(c(f)) \subset \mathcal{I} $, and so at worst, these columns are filled with noise bounded in magnitude by $ \tau $.

	Restricting our attention to $ \k \in \mathcal{S}_{ 4\tau } \subset \mathcal{S}_{ \tau }$, we know that Line~\ref{alg:2dDFT:Match} will be run with $ \omega = \k \cdot \z \bmod M $ and $ (k_\ell \bmod N, \k_\ell' \cdot \z_\ell' \bmod M) $ as an admissible index in the minimization.
	By the reconstructing property of $ \Lambda(\z, M) $, no other $ \h \in \mathcal{I} $ will correspond to an admissible index $ (h_\ell \bmod N, \h_\ell' \cdot \z_\ell' \bmod M) $, and so the only remaining values of $ (\F_N \, \mathbf{V}^\ell)_{ h, \omega' } $ in the minimization correspond to pure noise $ \eta $ bounded in magnitude by $ \tau $.
	Analyzing the objective at $ (k_\ell \bmod N, \k_\ell' \cdot \z_\ell' \bmod M)  $, we find
	\begin{equation*}
		| v_{ \k \cdot \z \bmod M } - ( \F_N \, \mathbf{V}^\ell)_{ k_\ell \bmod N, \k_\ell' \cdot \z_\ell' \bmod M } | \leq 2\tau < |c_\k(f)| - 2\tau \leq | v_{ \k \cdot \z \bmod M } - \eta|,
	\end{equation*}
	and so the value for $ (k_\omega)_\ell $ will in fact be assigned $ k_\ell $.
	Thus, after all $ d $ components of $ \k_\omega = \k $ have been recovered, $ b_\k $ will be assigned $ v_{ \k \cdot \z \bmod M } $.

	The remaining $ \max(s - | \mathcal{S}_{ 4\tau } |, 0) $ nonzero entries of $ \v $ can be distributed to entries of $ \mathbf{b} $ possibly correctly but with no guarantee; at the very least however, these values must be at most $ 4\tau + \eta_\infty $ in magnitude.
	We split $ \mathbf{b} $ as $ \mathbf{b} =  \mathbf{b}^\mathrm{correct} + \mathbf{b}^\mathrm{incorrect} $ to account for the values of $ \v $ respectively assigned correctly and incorrectly and note that $ \supp (\mathbf{b}^\mathrm{correct})  \supset \mathcal{S}_{ 4\tau } $.
	We then estimate the error as
	\begin{align*}
		\| \mathbf{b} - c \|_{ \ell^2( \mathbbm{ Z }^d) }
			&\leq \| \mathbf{b}^\mathrm{correct} - c \rvert_{ \supp (\mathbf{b}^\mathrm{correct}) } \|_{ \ell^2( \mathbbm{ Z }^d ) } + \| \mathbf{b}^\mathrm{incorrect} \|_{ \ell^2( \mathbbm{ Z^d } ) } + \| c - c\rvert_{ \supp (\mathbf{b}^\mathrm{correct}) } \|_{ \ell^2 (\mathbbm{ Z }^d) }  \\
			&\leq \eta_2 + (4\tau + \eta_\infty) \sqrt{ \max(s - | \mathcal{S}_{ 4\tau } |, 0) }+ \|c - c \rvert_{ \mathcal{S}_{ 4\tau } } \|_{ \ell^2( \mathbbm{Z}^d ) }.
	\end{align*}
	As in the proof of Lemma~\ref{lem:PhaseEncRecovery}, we note that the mandatory check in Line~\ref{alg:2dDFT:Check} helps ensure that all misassigned values $ v_\omega $ which contribute to $ \mathbf{b}^\mathrm{incorrect} $ correspond to reconstructed $ \k_\omega $ \emph{outside} of $ \mathcal{I} $, with the optional check in this line (see Remark~\ref{rem:AssignOnlyToI}) eliminating $ \mathbf{b}^\mathrm{incorrect} $ and the corresponding term in the error estimate entirely.

	Now, supposing that the Fourier support of $ f $ is not limited to only $ \mathcal{I} $, just as in the analysis for Algorithm~\ref{alg:PhaseEnc}, we treat $ f $ as a perturbation of $ f_{ \mathcal{I} } $, and use the robust SFT algorithm and the previous argument to approximate $ c\rvert_{ \mathcal{I} } $.
	Note again that in each SFT, the noise added when using measurements of $ f $ as proxies for those of $ f_{ \mathcal{I} } $ is compounded by $ \| f_{ \mathbbm{ Z }^d \setminus \mathcal{I} } \|_\infty $ and is bounded by $ \| c - c\rvert_{ \mathcal{I} } \|_{ \ell^1( \mathbbm{ Z }^d) } $.
	Applying the guarantees above gives
	\begin{align*}
		\| \mathbf{b} - c \|_{ \ell^2( \mathbbm{ Z }^d )} 
			&\leq \| \mathbf{b} - c\rvert_{ \mathcal{I} } \|_{ \ell^2( \mathbbm{ Z }^d) } + \| c - c\rvert_{ \mathcal{I} }\|_{ \ell^2(\mathbbm{Z}^d )}\\
			&\leq \eta_2 + (4\tau + \eta_\infty) \sqrt{ \max(s - | \mathcal{S}_{ 4\tau }|, 0) } + \| c\rvert_{ \mathcal{I} } - c\rvert_{ \mathcal{S}_{ 4\tau } }\|_{ \ell^2( \mathbbm{ Z }^d) } + \| c - c\rvert_{ \mathcal{I} }\|_{ \ell^2( \mathbbm{ Z }^d) }.
	\end{align*}

	Employing fast Fourier transforms for the at most $ dsN $ DFTs, the computational complexity of Lines~\ref{alg:2dDFT:BeginSFTFFTCombo}--\ref{alg:2dDFT:EndSFTFFTCombo} is $ \mathcal{O}\left( d(N \cdot R(s, M) + s N^2 \log N) \right) $  (which dominates the complexity of the remainder of the algorithm).
	Since $ 1 + dN $ SFTs are required, the number of $ f $ evaluations is $ \mathcal{O}(dN \cdot P(s, M)) $.
\end{proof}

\begin{rem}
	Though the number of nonzero columns of $ \mathbf{V}^\ell $ can be theoretically at most $ sN $, in practice with a high quality algorithm, each of the $ N $ SFTs should recover nearly the same frequencies, meaning that there are actually $ \mathcal{O}(s) $ columns.
	This would remove a power of $ N $ in the second term of the runtime estimate.

	Note however, that even with near exact SFT algorithms, recovering exactly $ s $ total frequencies is not a certainty.
	There can be cancellations for certain terms in $ \F_M \, \a_j^\ell $ depending interactions between the coefficients sharing the same values on their $ [d] \setminus \{\ell\} $ entries, which makes it possible that an SFT on $ \F_M \, \a_j^\ell $ will miss coefficients.
	If required to output $ s $-entries, an SFT algorithm could favor some noisy value corresponding to a frequency outside the support.
\end{rem}

\begin{rem}
	Though we perform an exact FFT of the nonzero columns of $ \mathbf{V}^\ell $ in Line~\ref{alg:2dDFT:ColumnDFT} of Algorithm~\ref{alg:2dDFT}, Lemma~\ref{lem:2d4supportFinding} implies that the resulting matrix will be as sparse as the original function's Fourier transform.
	Thus, for a truly compressible function, an SFT down the columns of $ \mathbf{V}^\ell $ would be feasible as well.
	However, in especially higher dimensions, even small $ N $ can support large frequency spaces $ \mathcal{I} $.
	In these large frequency spaces, what is perceived as relatively sparse can therefore quickly surpass $ N $, rendering an $ s $-sparse, length $ N $ SFT useless.
\end{rem}

Applying the discrete sublinear-time SFT from Theorem~\ref{thm:1dDiscreteSFT} to Lemma~\ref{lem:2dDFTRecovery} analogously to the derivation of Corollary~\ref{cor:PhaseEncRecoveryDiscreteSFT} from Lemma~\ref{lem:PhaseEncRecovery} allows for the following recovery bound for Algorithm~\ref{alg:2dDFT}.
In particular, we observe asymptotically improved error guarantees over Corollary~\ref{cor:PhaseEncRecoveryDiscreteSFT} at the cost of a slight increase in runtime.

\begin{cor}[Algorithm \ref{alg:2dDFT} with discrete sublinear-time SFT]
	\label{cor:2dDFTRecoveryDiscreteSFT}
	For $ \mathcal{I} \subset \mathbbm{ Z }^d $ with reconstructing rank-1 lattice $ \Lambda(\z, M) $ and the function $ f \in \mathcal{W}(\mathbbm{ T }^d) \cap C( \mathbbm{ T }^d ) $, we consider applying Algorithm~\ref{alg:2dDFT} where each function sample may be corrupted by noise at most $ e_\infty \geq 0 $ in absolute magnitude. Using the discrete sublinear-time SFT algorithm $ \mathcal{A}_{ 2s, M }^\mathrm{disc} $ or $ \mathcal{A}_{ 2s, M }^\mathrm{disc, MC}  $ with parameter $ 1 \leq r \leq \frac{M}{36} $ will produce $ \mathbf{b} = (b_{ \k })_{ \k \in \mathcal{B}_N^d } $ a $ 2s $-sparse approximation of $ c $ satisfying the error estimate
	\begin{align*}
		\left\| \mathbf{b} - c \right\|_{ \ell^2( \mathbbm{ Z }^d )} 
		&\leq 206 \frac{\|c\rvert_{ \mathcal{I} } - (c\rvert_{ \mathcal{I} })_s^\mathrm{opt}\|_1}{\sqrt{ s }} + 820 \sqrt{ s } (\|f\|_\infty M^{ -r } + \|c - c\rvert_{ \mathcal{I} }\|_1 + e_\infty ) \\
		&\qquad + \|c - c\rvert_{ \mathcal{I} }\|_2
	\end{align*}
	albeit with probability $ 1 - \sigma \in [0, 1) $ for the Monte Carlo version.

	The total number of evaluations of $ f $ and the computational complexity will be 
	\begin{equation*}
		\mathcal{O} \left( dsN \left( \frac{s r^{ 3/2 } \log^{ 11/2 }M}{\log s} + N \log N \right) \right) \text{ or } \mathcal{O} \left( dsN \left( r^{ 3/2 } \log^{ 9/2 }(M) \log \left( \frac{dNM}{\sigma} \right) + N \log N \right) \right)
	\end{equation*}
	for $ \mathcal{A}_{ 2s, M }^\mathrm{disc} $ or $ \mathcal{A}_{ 2s, M }^\mathrm{disc, MC} $ respectively.
\end{cor}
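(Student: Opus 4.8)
The plan is to instantiate the general recovery bound of Lemma~\ref{lem:2dDFTRecovery} with the concrete SFT parameters supplied by Theorem~\ref{thm:1dDiscreteSFT}, exactly mirroring the derivation of Corollary~\ref{cor:PhaseEncRecoveryDiscreteSFT} from Lemma~\ref{lem:PhaseEncRecovery}. Since we feed Algorithm~\ref{alg:2dDFT} the $ 2s $-sparse algorithm $ \mathcal{A}_{ 2s, M }^\mathrm{disc} $ (or its Monte Carlo variant), every ``$ s $'' in Lemma~\ref{lem:2dDFTRecovery} is replaced by ``$ 2s $'', so in particular the combinatorial term becomes $ (4\tau + \eta_\infty)\sqrt{\max(2s - |\mathcal{S}_{4\tau}|,0)} \leq (4\tau+\eta_\infty)\sqrt{2s} $. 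As in the proof of Lemma~\ref{lem:2dDFTRecovery}, I first reduce the general case to the polynomial case by treating $ f_{\mathbbm{Z}^d\setminus\mathcal{I}} $ as measurement noise, so that $ \|\a\|_\infty \leq \|f\|_\infty $ and the effective noise on each SFT is bounded by $ \|c - c\rvert_{\mathcal{I}}\|_1 + e_\infty $, while the reconstructing property of $ \Lambda(\z,M) $ (via Lemmas~\ref{lem:DFTEqualsFT} and \ref{lem:DFTAliasing}) identifies the $ \ell^1 $ tail $ \|\F_M\,\a - (\F_M\,\a)_s^\mathrm{opt}\|_1 $ with $ \|c\rvert_{\mathcal{I}} - (c\rvert_{\mathcal{I}})_s^\mathrm{opt}\|_1 $. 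Introducing the same abbreviations
\begin{equation*}
	A = \frac{\|c\rvert_{\mathcal{I}} - (c\rvert_{\mathcal{I}})_s^\mathrm{opt}\|_1}{\sqrt{s}}, \qquad \delta = \frac{A}{2\sqrt{s}} + 2\left(\|f\|_\infty M^{-r} + \|c - c\rvert_{\mathcal{I}}\|_1 + e_\infty\right)
\end{equation*}
as in Corollary~\ref{cor:PhaseEncRecoveryDiscreteSFT}, Theorem~\ref{thm:1dDiscreteSFT} then yields $ \tau \leq 12(1+\sqrt{2})\delta $, $ \eta_\infty \leq 3\sqrt{2}\,\delta $, and, using \cite[Theorem~2.5]{Foucart2013} on $ \F_M\,\a - (\F_M\,\a)_s^\mathrm{opt} $, the bound $ \eta_2 \leq \tfrac{A}{2} + 76\sqrt{s}\,\delta $.

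Next I would control $ \|c\rvert_{\mathcal{I}} - c\rvert_{\mathcal{S}_{4\tau}}\|_2 $ by applying Lemma~\ref{lem:ErrorInTauSigFrequencies} to $ \x = c\rvert_{\mathcal{I}} $ with threshold $ 4\tau $. The hypothesis of that lemma needs to be checked: since $ \tau \geq 12(1+\sqrt{2})\cdot\tfrac{\|c\rvert_{\mathcal{I}} - (c\rvert_{\mathcal{I}})_s^\mathrm{opt}\|_1}{2s} = 6(1+\sqrt{2})\cdot\tfrac{\|c\rvert_{\mathcal{I}} - (c\rvert_{\mathcal{I}})_s^\mathrm{opt}\|_1}{s} $, a fortiori $ 4\tau \geq \tfrac{\|c\rvert_{\mathcal{I}} - (c\rvert_{\mathcal{I}})_s^\mathrm{opt}\|_1}{s} $, so the lemma gives $ |\mathcal{S}_{4\tau}| \leq 2s $ (validating the sparsity assumption of Lemma~\ref{lem:2dDFTRecovery} with $ s\to 2s $) together with $ \|c\rvert_{\mathcal{I}} - c\rvert_{\mathcal{S}_{4\tau}}\|_2 \leq \tfrac{A}{2} + 4\tau\sqrt{2s} $, again invoking \cite[Theorem~2.5]{Foucart2013} for the best $ 2s $-term estimate. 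The crucial structural feature here, and the source of the advertised asymptotic improvement over Corollary~\ref{cor:PhaseEncRecoveryDiscreteSFT}, is that the recovery bound of Lemma~\ref{lem:2dDFTRecovery} is phrased in terms of $ 4\tau $ rather than the $ N $-inflated threshold $ \beta = \eta_\infty(1 + 2/\sin(\pi/N)) $; consequently no factor of $ N $ enters the error at all. Substituting the three estimates into
\begin{equation*}
	\|\mathbf{b} - c\|_2 \leq \eta_2 + (8\tau + \eta_\infty)\sqrt{2s} + \tfrac{A}{2} + \|c - c\rvert_{\mathcal{I}}\|_2
\end{equation*}
collects a coefficient $ (274 + 96\sqrt{2}) $ in front of $ \sqrt{s}\,\delta $; expanding $ \sqrt{s}\,\delta = \tfrac{A}{2} + 2\sqrt{s}(\|f\|_\infty M^{-r} + \|c - c\rvert_{\mathcal{I}}\|_1 + e_\infty) $ and verifying the numerical bounds $ 138 + 48\sqrt{2} \leq 206 $ and $ 548 + 192\sqrt{2} \leq 820 $ produces the stated error estimate.

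Finally, the complexity follows by inserting $ P(2s,M) = R(2s,M) = \mathcal{O}\!\big(s^2 r^{3/2}\log^{11/2}M/\log s\big) $ (for $ \mathcal{A}_{2s,M}^\mathrm{disc} $) into the $ \mathcal{O}(dN\cdot P(s,M)) $ sampling and $ \mathcal{O}\!\big(dN(R(s,M) + sN\log N)\big) $ operation counts of Lemma~\ref{lem:2dDFTRecovery} and factoring out $ dsN $; for the Monte Carlo variant a union bound distributes the failure probability $ \sigma $ over the $ 1 + dN $ SFTs, replacing $ \log(M/\sigma) $ by $ \mathcal{O}(\log(dNM/\sigma)) $ and giving the second complexity expression. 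I do not anticipate a genuine obstacle beyond careful constant bookkeeping: the only point requiring attention is the verification that the lower bound on $ \tau $ licenses the application of Lemma~\ref{lem:ErrorInTauSigFrequencies} at threshold $ 4\tau $, after which the remaining work is the arithmetic consolidation of constants described above.
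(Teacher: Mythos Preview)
Your proposal is correct and follows essentially the same route as the paper: you instantiate Lemma~\ref{lem:2dDFTRecovery} (with $s\to 2s$) using the parameters $\tau \leq 12(1+\sqrt{2})\delta$, $\eta_\infty \leq 3\sqrt{2}\,\delta$, $\eta_2 \leq \tfrac{A}{2}+76\sqrt{s}\,\delta$ from Theorem~\ref{thm:1dDiscreteSFT}, bound $\|c\rvert_{\mathcal{I}} - c\rvert_{\mathcal{S}_{4\tau}}\|_2$ via Lemma~\ref{lem:ErrorInTauSigFrequencies}, and collect the coefficient $B = 274+96\sqrt{2}$ on $\sqrt{s}\,\delta$ before expanding to obtain $138+48\sqrt{2}\leq 206$ and $548+192\sqrt{2}\leq 820$. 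The only cosmetic difference is that you merge $(4\tau+\eta_\infty)\sqrt{2s}$ with the $4\tau\sqrt{2s}$ contribution from the $\mathcal{S}_{4\tau}$ term into $(8\tau+\eta_\infty)\sqrt{2s}$, whereas the paper keeps them separate; your explicit verification of the Lemma~\ref{lem:ErrorInTauSigFrequencies} hypothesis at threshold $4\tau$ is a nice addition.
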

\begin{proof}
	In detail, let
	\begin{gather*}
		A = \frac{\|c\rvert_{ \mathcal{I} } - (c\rvert_{ \mathcal{I} })_s^\mathrm{opt}\|_1}{\sqrt{ s }}, \quad B = \left( 274 + 96 \sqrt{ 2 } \right) \\
		\delta = \left( \frac{A}{2\sqrt{ s }} + 2 (\|f\|_\infty M^{ -r } + \|c - c\rvert_{ \mathcal{I} } \|_1 + e_\infty) \right).
	\end{gather*}
	Then
	\begin{gather*}
		\tau \leq 12(1 + \sqrt{ 2 }) \delta, \quad \eta_\infty \leq 3 \sqrt{ 2 } \delta\\
		\eta_2 \leq \frac{A}{2} + 76 \sqrt{ s } \delta, \\
		\|c\rvert_{ \mathcal{I} } - c\rvert_{ \mathcal{S}_{ 4\tau } }\|_2 \leq \frac{A}{2} + 4\sqrt{ 2 } \tau \sqrt{ s } = \frac{A}{2} + 48 (2 + \sqrt{ 2 }) \delta \sqrt{ s }.
	\end{gather*}
	Our error bound is then
	\begin{align*}
		\left\| \mathbf{b} - c \right\|_{ \ell^2( \mathbbm{ Z }^d )}
		&\leq \eta_2 + (4\tau + \eta_\infty) \sqrt{ 2s } + \|c\rvert_{ \mathcal{I} } - c\rvert_{ \mathcal{S}_{ 4\tau } }\|_{ \ell^2( \mathbbm{ Z }^d ) } + \| c - c\rvert_{ \mathcal{I} }\|_{ \ell^2( \mathbbm{ Z }^d) }\\
		&\leq A + B\sqrt{ s }\delta + \| c - c\rvert_{ \mathcal{I} }\|_2\\
		&= \left(1 + \frac{B}{2}\right) A + 2B \sqrt{ s } (\|f\|_\infty M^{ -r } + \|c - c\rvert_{ \mathcal{I} }\|_1 + e_\infty) + \| c - c\rvert_{ \mathcal{I} }\|_2\\
		&\leq 206\frac{\|c\rvert_{ \mathcal{I} } - (c\rvert_{ \mathcal{I} })_s^\mathrm{opt}\|_1}{\sqrt{ s }}  + 820 \sqrt{ s } ( \|f\|_\infty M^{ -r } + \|c - c\rvert_{ \mathcal{I} }\|_1 + e_\infty) \\
		&\qquad+ \|c - c\rvert_{ \mathcal{I} }\|_2.
	\end{align*}
\end{proof}

Again, the same strategy from Corollary~\ref{cor:PhaseEncRecoverySublinearSFT} of widening the frequency band and shifting the one-dimensional transforms accordingly allows us to use the nonequispaced SFT algorithm from Theorem~\ref{thm:1dSFT} in Algorithm~\ref{alg:2dDFT}.
Note here that the widening and shifting occurs on a dimension by dimension basis so as to account for the differing one-dimensional frequencies of the form $ \k_\ell' \cdot \z_\ell' $ for $ \k \in \mathcal{I} $.

\begin{cor}[Algorithm \ref{alg:2dDFT} with nonequispaced sublinear-time SFT]
	\label{cor:2dDFTRecoverySublinearSFT}
	For $ \mathcal{I} \subset \mathcal{B}_N^d $, let $ \tilde M $ be the larger one-dimensional bandwidth parameter from Corollary~\ref{cor:PhaseEncRecoverySublinearSFT}, and additionally define $ \tilde M^\ell := 2 \max_{ \k \in \mathcal{I} } |\k_\ell' \cdot \z_\ell'| + 1 $.
For $ \Lambda(\z, M) $, a reconstructing rank-1 lattice for $ \mathcal{I} $ with $ M \leq \min\{\tilde M, \min_{ \ell \in [d] } \tilde M^\ell \} $, and the function $ f \in \mathcal{W}(\mathbbm{ T }^d) \cap C( \mathbbm{ T }^d ) $, we consider applying Algorithm~\ref{alg:2dDFT} where each function sample may be corrupted by noise at most $ e_\infty \geq 0 $ in absolute magnitude with the following modifications:
	\begin{enumerate}
		\item use the sublinear-time SFT algorithm $ \mathcal{A}_{ 2s, \tilde M }^\mathrm{sub} $ or $ \mathcal{A}_{ 2s, \tilde M }^\mathrm{sub, MC} $ in Line~\ref{alg:2dDFT:LatticeSFT} and $ \mathcal{A}_{ 2s, \tilde M^\ell }^\mathrm{sub} $ or $ \mathcal{A}_{ 2s, \tilde M^\ell }^\mathrm{sub, MC} $ in Line~\ref{alg:2dDFT:ShiftedLatticeSFT}
		\item and only check equality against $ \omega $ in Line~\ref{alg:2dDFT:Match} (rather than equivalence modulo $ M $),
	\end{enumerate}
	to produce $ \mathbf{b} = (b_\k)_{ \k \in \mathcal{B}_N^d } $ a $ 2s $-sparse approximation of $ c $ satisfying the error estimate
	\begin{align*}
		\left\| \mathbf{b} - c \right\|_{ \ell^2( \mathbbm{ Z }^d )} 
		&= 98 \left(\frac{\|c\rvert_{ \mathcal{I} } - (c\rvert_{ \mathcal{I} })_s^\mathrm{opt}\|_1}{\sqrt{ s }} + \sqrt{ s } \|c - c\rvert_{ \mathcal{I} }\|_1 + \sqrt{ s } e_\infty\right) + \| c - c\rvert_{ \mathcal{I} }\|_2 \\
	\end{align*}
	albeit with probability $ 1 - \sigma \in [0, 1) $ for the Monte Carlo version.

	Letting $ \bar M = \max( \tilde M, \max_{ \ell \in [d] } \tilde M^\ell) $, the total number of evaluations of $ f $ will be 
	\begin{equation*}
		\mathcal{O} \left( \frac{dNs^2 \log^4 \bar M}{\log s}  \right) \text{ or } \mathcal{O} \left( dNs \log^3 \bar M \log \left( \frac{dN\bar M}{\sigma} \right) \right)
	\end{equation*}
	with associated computational complexities
	\begin{equation*}
		\mathcal{O} \left( dNs \left( \frac{s \log^4 \bar M}{\log s} + N \log N \right)  \right) \text{ or } \mathcal{O} \left( dNs \left( \log^3 \bar M \log \left( \frac{dN\bar M}{\sigma} \right) + N \log N \right) \right)
	\end{equation*}
	for $ \mathcal{A}_{ 2s, \cdot }^\mathrm{sub} $ and $ \mathcal{A}_{ 2s, \cdot }^\mathrm{sub, MC} $ respectively.
\end{cor}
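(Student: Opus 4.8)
The plan is to reduce the statement to the general recovery bound of Lemma~\ref{lem:2dDFTRecovery} instantiated with the nonequispaced sublinear-time SFT of Theorem~\ref{thm:1dSFT}, exactly paralleling the way Corollary~\ref{cor:2dDFTRecoveryDiscreteSFT} was obtained from the \emph{discrete} SFT. The one genuine wrinkle — and the step I expect to be the main obstacle — is that $\mathcal{A}_{2s,\cdot}^{\mathrm{sub}}$ and $\mathcal{A}_{2s,\cdot}^{\mathrm{sub,MC}}$ approximate the truncated continuous transform $\hat{\mathbf{a}}$ rather than a DFT, so they do \emph{not} alias the one-dimensional frequencies $\k\cdot\z$ and $\k_\ell'\cdot\z_\ell'$ into a band of size $M$. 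Consequently Lemma~\ref{lem:2d4supportFinding}, whose proof relies on DFT-aliasing under a reconstructing lattice, cannot be applied directly at bandwidth $M$. The first task is therefore to restore the hypotheses of that lemma by widening the bands, as already done for the phase-encoding method in Corollary~\ref{cor:PhaseEncRecoverySublinearSFT}.

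Concretely, I would run the lattice SFT on Line~\ref{alg:2dDFT:LatticeSFT} at bandwidth $\tilde M = 2\max_{\k\in\mathcal{I}}|\k\cdot\z|+1$ and each restricted SFT on Line~\ref{alg:2dDFT:ShiftedLatticeSFT} at the (generally smaller) dimension-dependent bandwidth $\tilde M^\ell = 2\max_{\k\in\mathcal{I}}|\k_\ell'\cdot\z_\ell'|+1$. By construction these bands contain every relevant one-dimensional frequency, namely $\{\k\cdot\z\mid\k\in\mathcal{I}\}\subset\mathcal{B}_{\tilde M}$ and $\{\k_\ell'\cdot\z_\ell'\mid\k\in\mathcal{I}\}\subset\mathcal{B}_{\tilde M^\ell}$, so when $f_{\mathcal{I}}$ is a trigonometric polynomial Lemma~\ref{lem:DFTEqualsFT} gives $\hat{\mathbf{a}}=\mathbf{F}_{\tilde M}\,\mathbf{a}$ and likewise for the $a_j^\ell$ at length $\tilde M^\ell$. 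The reconstructing property of $\Lambda(\z,M)$ for $\mathcal{I}$ and for $\{\k-k_\ell\mathbf{e}_\ell\}$ (with $M\le\min\{\tilde M,\min_\ell\tilde M^\ell\}$) forces the integer frequencies $\k\cdot\z$, respectively $\k_\ell'\cdot\z_\ell'$, to be distinct, so Lemma~\ref{lem:2d4supportFinding} applies verbatim with $M$ replaced by the appropriate widened bandwidth. Because there is no longer any aliasing, the matching in Line~\ref{alg:2dDFT:Match} should test the exact identity $hz_\ell+\omega'=\omega$ (modification~(2)) instead of congruence modulo $M$; with that change the entire error analysis of Lemma~\ref{lem:2dDFTRecovery} — the nearest-value selection argument establishing that every $\k\in\mathcal{S}_{4\tau}$ is recovered correctly, and the splitting $\mathbf{b}=\mathbf{b}^{\mathrm{correct}}+\mathbf{b}^{\mathrm{incorrect}}$ — goes through unchanged, and the reduction to the non-polynomial case again proceeds by treating $f_{\mathbbm{Z}^d\setminus\mathcal{I}}$ as measurement noise bounded by $\|c-c\rvert_{\mathcal{I}}\|_1$.

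With the structural step in place, the remainder is a constant-chasing computation identical in form to the proof of Corollary~\ref{cor:2dDFTRecoveryDiscreteSFT}, using the $2s$-sparse version of Lemma~\ref{lem:2dDFTRecovery} (so $s$ is replaced by $2s$). I would insert the Theorem~\ref{thm:1dSFT} values $\tau=(4+2\sqrt2)\delta$, $\eta_\infty=\sqrt2\,\delta$, and $\eta_2\le \tfrac12 A+(8\sqrt2+6)\sqrt s\,\delta$ with $A=\|c\rvert_{\mathcal{I}}-(c\rvert_{\mathcal{I}})_s^{\mathrm{opt}}\|_1/\sqrt s$ and $\delta=A/\sqrt s+\|c-c\rvert_{\mathcal{I}}\|_1+e_\infty$, bound $\|c\rvert_{\mathcal{I}}-c\rvert_{\mathcal{S}_{4\tau}}\|_2\le \tfrac12 A+4\tau\sqrt{2s}$ via Lemma~\ref{lem:ErrorInTauSigFrequencies} (checking $4\tau\ge A/\sqrt s$), and estimate the best-$2s$ terms through $\|\cdot-(\cdot)_{2s}^{\mathrm{opt}}\|_2\le \|\cdot-(\cdot)_s^{\mathrm{opt}}\|_1/(2\sqrt s)$ from \cite[Theorem 2.5]{Foucart2013}. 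The crucial simplification relative to the discrete case is that the nonequispaced SFT carries no $\|f\|_\infty M^{-r}$ discretization term, so collecting the $A$-coefficients to $1+40+40\sqrt2\le 98$ and the noise coefficients to $40\sqrt2+40\le 98$ yields the stated clean constant $98$ with no factor of $N$ in the error. Finally, the complexity follows by counting the $1+dN$ SFTs at bandwidths at most $\bar M=\max(\tilde M,\max_\ell\tilde M^\ell)$ using the sample/operation counts of Theorem~\ref{thm:1dSFT}, adding the $\mathcal{O}(dsN\cdot N\log N)$ cost of the exact length-$N$ FFTs down the columns, and, for the Monte Carlo variant, distributing the failure probability over the $1+dN$ transforms by a union bound to produce the $\log(dN\bar M/\sigma)$ factor.
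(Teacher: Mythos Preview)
Your proposal is correct and follows essentially the same approach as the paper. In fact, you are more explicit than the paper about the structural step (widening the bandwidths so that Lemma~\ref{lem:DFTEqualsFT} makes the truncated transform coincide with a DFT, and replacing the mod-$M$ check by exact equality); the paper's written proof simply carries out the constant computation with $B=40(1+\sqrt{2})$, tacitly relying on the analogous justification already given for Corollary~\ref{cor:PhaseEncRecoverySublinearSFT}.
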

\begin{proof}
	In detail, let
	\begin{gather*}
		A = \frac{\|c\rvert_{ \mathcal{I} } - (c\rvert_{ \mathcal{I} })_s^\mathrm{opt}\|_1}{\sqrt{ s }}, \quad B = 40 (1 + \sqrt{ 2 })\\
		\delta = \left( \frac{A}{\sqrt{ s }} + \|c - c\rvert_{ \mathcal{I} } \|_1 + e_\infty \right).
	\end{gather*}
	Then
	\begin{gather*}
		\tau \leq (4 + 2\sqrt{ 2 }) \delta, \quad \eta_\infty \leq \sqrt{ 2 } \delta\\
		\eta_2 \leq \frac{A}{2} + (8 \sqrt{ 2 } + 6) \sqrt{ s } \delta, \\
		\|c\rvert_{ \mathcal{I} } - c\rvert_{ \mathcal{S}_{ 4\tau } }\|_2 \leq \frac{A}{2} + 4\sqrt{ 2 }\tau \sqrt{ s } = \frac{A}{2} + 16(1 + \sqrt{ 2 })\delta \sqrt{ s }.
	\end{gather*}
	Our error bound is then
	\begin{align*}
		\|\mathbf{b} - c\|_2 
		&\leq \eta_2 + (4\tau + \eta_\infty) \sqrt{ 2s } + \|c\rvert_{ \mathcal{I} } - c\rvert_{ \mathcal{S}_{ 4\tau } }\|_{ \ell^2( \mathbbm{ Z }^d ) } + \| c - c\rvert_{ \mathcal{I} }\|_{ \ell^2( \mathbbm{ Z }^d) }\\
		&= A + B\sqrt{ s }\delta + \| c - c\rvert_{ \mathcal{I} }\|_2 \\
		&= (1 + B) \frac{\|c\rvert_{ \mathcal{I} } - (c\rvert_{ \mathcal{I} })_s^\mathrm{opt}\|_1}{\sqrt{ s }} + B \sqrt{ s } \|c - c\rvert_{ \mathcal{I} }\|_1 + B \sqrt{ s } e_\infty + \| c - c\rvert_{ \mathcal{I} }\|_2 \\
		&= 98 \left(\frac{\|c\rvert_{ \mathcal{I} } - (c\rvert_{ \mathcal{I} })_s^\mathrm{opt}\|_1}{\sqrt{ s }} + \sqrt{ s } \|c - c\rvert_{ \mathcal{I} }\|_1 + \sqrt{ s } e_\infty\right) + \| c - c\rvert_{ \mathcal{I} }\|_2 \\
	\end{align*}
\end{proof}

\begin{rem}
	The bounds in Remark~\ref{rem:Larger1dBandwidth} will still hold for $ \tilde M^\ell $ as well; thus one of these upper bounds can be used as the effective bandwidth parameter for every SFT without having to calculate the $ d + 1$ bandwidths by scanning $ \mathcal{I} $.
	Again however, if this scan is tolerable, one can reduce the overall complexity by using analogous minimal bandwidths discussed in Remark~\ref{rem:Larger1dBandwidth} along with corresponding frequency shifts.
\end{rem}

\section{Numerics}%
\label{sec:numerics}
We now demonstrate the effectiveness of our phase encoding and two-dimensional DFT algorithms for computing Fourier coefficients of multivariate functions in a series of empirical tests.
The two techniques are implemented in MATLAB, with the code for the algorithms and tests in this section publicly available\footnote{available at \url{https://gitlab.com/grosscra/Rank1LatticeSparseFourier}}.
The results below use a MATLAB implementation\footnote{available at \url{https://gitlab.com/grosscra/SublinearSparseFourierMATLAB}} of the randomized univariate sublinear-time nonequispaced algorithm $ \mathcal{A}_{ 2s, M }^\mathrm{sub,MC} $ (cf.\ Theorem~\ref{thm:1dSFT}) as the underlying SFT for both multivariate approaches as this allows for the fastest runtime and most sample efficient implementations.

In the univariate code, all parameters but one are qualitatively tuned below theoretical upper bounds to increase efficiency while maintaining accuracy and are kept constant between tests below.
In particular, we fix the values $ \texttt{C} := 1 $, $ \texttt{sigma} := 2/3 $, and $ \texttt{primeShift} := 0 $ (see the documentation and the original paper \cite{Iwen13} for more detail).
The only parameter we vary is \texttt{randomScale} which affects the rate at which the deterministic algorithm $ \mathcal{A}_{ 2s, M }^\mathrm{sub} $ is randomly sampled to produce the Monte Carlo version $ \mathcal{A}_{ 2s, M }^\mathrm{ sub, MC} $.
This parameter represents a multiplicative scaling on logarithmic factors of the bandwidth which determines how many prime numbers are randomly selected from those used in the deterministic SFT implementation.
Therefore, lower values of \texttt{randomScale} will result in using fewer prime numbers, decreasing the number of function samples and overall runtime at the risk of a higher probability of failure.
We consider values well below the code default and theoretical upper bound of $ 21 $ given in \cite{Iwen13}.

\subsection{Exactly sparse case}

In the beginning, we consider the case of multivariate trigonometric polynomials with frequencies supported within hyperbolic cross index sets. We define the $d$-dimensional hyperbolic cross frequency set
$$\mathcal{H}_N^d := \left\{\k \in \mathbbm{Z}^d \colon \prod_{\ell=1}^d \max(1,|k_\ell|) \leq \frac{N}{2} \quad \text{and} \quad \max_{\ell=1,\ldots,d} k_\ell<\frac{N}{2}
\right\}\subset \mathcal{B}_N^d$$ of expansion $N\in\mathbbm{N}$.
For a given sparsity $s$, we choose $s$ many frequencies uniformly at random from $\mathcal{H}_N^d$, and we randomly draw corresponding Fourier coefficients $c_\k$ from $[-1,1] + \mathbbm{ i }\,[-1,1]$, $|c_\k|\geq 10^{-3}$. For each parameter setting, we perform the tests 100 times and determine the success rate, i.e., the relative number of cases (out of the 100) where all frequencies were correctly detected, as well as the average number of samples.

\FloatBarrier

\subsubsection{Random frequency sets within 10-dimensional hyperbolic cross and high-dimensional full cuboids}

We set the spatial dimension $d:=10$, the expansion $N:=33$, and use $\mathcal{I}:=\mathcal{H}_{33}^{10}$ as set of possible frequencies with cardinality $|\mathcal{I}|=45\,548\,649$.
Then, the rank-1 lattice with generating vector
\begin{equation}\label{equ:numerics_lattice_z_for_H_33_10}
\z:=(1,\, 33,\, 579,\, 3\,628,\, 21\,944,\, 169\,230,\, 1\,105\,193,\, 7\,798\,320,\, 49\,768\,670,\, 320\,144\,128)^\top
\end{equation}
and lattice size $M:=2\,040\,484\,044$ is a reconstructing one. We apply Algorithm~\ref{alg:PhaseEnc} and Algorithm~\ref{alg:2dDFT} with the SFT algorithm $\mathcal{A}_{ 2s, \tilde M }^\mathrm{sub, MC}$.

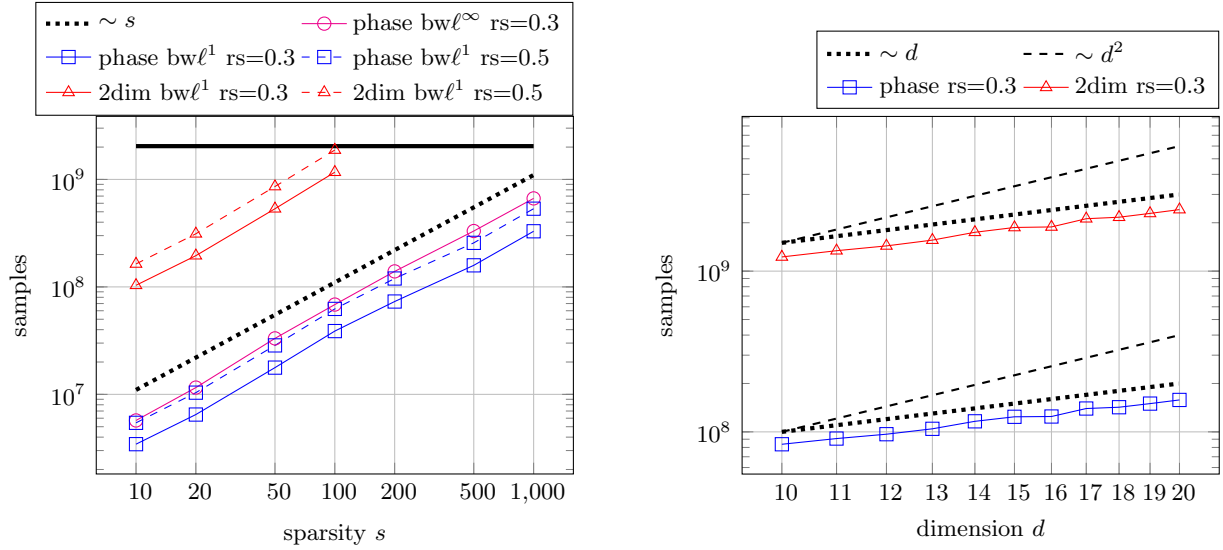
\begin{figure}[!h]
\begin{subfigure}[t]{0.48\textwidth}
		\begin{tikzpicture}[baseline]
		\begin{axis}[
		font=\footnotesize,
		enlarge x limits=true,
		enlarge y limits=true,
		height=0.8\textwidth,
		grid=major,
		width=\textwidth,
		xtick={10,20,50,100,200,500,1000},
		xmode=log,
		ymode=log,
		xticklabel={
			\pgfkeys{/pgf/fpu=true}
			\pgfmathparse{exp(\tick)}%
			\pgfmathprintnumber[fixed relative, precision=3]{\pgfmathresult}
			\pgfkeys{/pgf/fpu=false}
		},
		xlabel={sparsity $s$},
		ylabel={samples},
		legend style={legend cell align=left, at={(1,1.32)}},
		legend columns = 2,
		]
		\addplot[forget plot,black,no marks,ultra thick] coordinates {
          (10,2040484044) (1000,2040484044)
        };
        \addplot[dotted,ultra thick,samples=100,black,domain=10:1000] {1.1e6*x};
        \addlegendentry{$\sim s$}
		\addplot[magenta,mark=o,mark size=2.5pt,mark options={solid}] coordinates {
 (10,5719643.930) (20,11564591.060) (50,33192690.080) (100,68499805.770) (200,139422192.800) (500,331049457.750) (1000,666182351.230) %
		};
	    \addlegendentry{phase bw$\ell^\infty$ rs=0.3}
		\addplot[blue,mark=square,mark size=2.5pt,mark options={solid}] coordinates {
 (10,3440435.350) (20,6490351.120) (50,17737835.500) (100,38807702.340) (200,73104032.540) (500,158743876.390) (1000,329344657.840) %
		};
	    \addlegendentry{phase bw$\ell^1$ rs=0.3}
		\addplot[blue,mark=square,mark size=2.5pt,dashed,mark options={solid}] coordinates {
 (10,5435147.520) (20,10372995.710) (50,28556370.810) (100,62408538.170) (200,119675996.000) (500,256934928.470) (1000,534237652.300) %
		};
	    \addlegendentry{phase bw$\ell^1$ rs=0.5}
		\addplot[red,mark=triangle,mark size=2.5pt,mark options={solid}] coordinates {
 (10,103525827.350) (20,195300565.520) (50,533747595.500) (100,1167759043.140) %
        };
        \addlegendentry{2dim bw$\ell^1$ rs=0.3}
		\addplot[red,mark=triangle,mark size=2.5pt,dashed,mark options={solid}] coordinates {
 (10,163548529.920) (20,312132870.910) (50,859287158.010) (100,1877929648.570) %
        };
        \addlegendentry{2dim bw$\ell^1$ rs=0.5}
		\end{axis}
		\end{tikzpicture}
\subcaption{Samples vs.\ sparsity $s$. Random frequencies are chosen from hyperbolic cross $\mathcal{I}:=\mathcal{H}_{33}^{10}$. ``bw$\ell^\infty$'' and ``bw$\ell^1$'' correspond to bandwidth parameters $\tilde{M}=dNM\approx 6.7\cdot 10^{11}$ and $\tilde{M}=1 + 2 \| \z \|_\infty \max_{ \k \in \mathcal{I} } \| \k \|_1\approx 1.6\cdot 10^{10}$, respectively.}
\label{fig:numerics:trigpoly_rand_hc}
\end{subfigure}
	\hfill
\begin{subfigure}[t]{0.48\textwidth}
		\begin{tikzpicture}[baseline]
		\begin{axis}[
		font=\footnotesize,
		enlarge x limits=true,
		enlarge y limits=true,
		height=0.8\textwidth,
		grid=major,
		width=\textwidth,
		xtick={10,11,12,13,14,15,16,17,18,19,20},
		xmode=log,
		ymode=log,
		xticklabel={
			\pgfkeys{/pgf/fpu=true}
			\pgfmathparse{exp(\tick)}%
			\pgfmathprintnumber[fixed relative, precision=3]{\pgfmathresult}
			\pgfkeys{/pgf/fpu=false}
		},
		xlabel={dimension $d$},
		ylabel={samples},
		legend style={legend cell align=left, at={(1,1.24)}},
		legend columns = 2,
		]
        \addplot[dotted,ultra thick,samples=100,black,domain=10:20] {1e7*x};
        \addlegendentry{$\sim d$}
        \addplot[dashed,thick,samples=100,black,domain=10:20] {1e6*x*x};
        \addlegendentry{$\sim d^2$}
        \addplot[forget plot,dotted,ultra thick,samples=100,black,domain=10:20] {1.5e8*x};
        \addplot[forget plot,dashed,thick,samples=100,black,domain=10:20] {1.5e7*x*x};
		\addplot[blue,mark=square,mark size=2.5pt,mark options={solid}] coordinates {
          (10,83724703.37) (11,90845430.12) (12,96700248.06) (13,104469819.02) (14,116306090.55) (15,124094091.52) (16,124609978.75) (17,139702664.70) (18,142324967.82) (19,149913136.20) (20,157986967.74) %
		};
	    \addlegendentry{phase rs=0.3}
		\addplot[red,mark=triangle,mark size=2.5pt,mark options={solid}] coordinates {
          (10,1225425203.87) (11,1339970094.27) (12,1435626759.66) (13,1559585155.37) (14,1744591358.25) (15,1869167253.52) (16,1883809678.75) (17,2118823747.95) (18,2164837668.42) (19,2286175327.05) (20,2414943649.74) %
        };
        \addlegendentry{2dim rs=0.3}
		\end{axis}
		\end{tikzpicture}
\subcaption{Samples vs.\ spatial dimension $d$. Random frequencies are chosen from \textbf{full cuboid} $\mathcal{I}$ of cardinality $|\mathcal{I}|\approx 10^{12}$ with corresponding lattice of size $M=|\mathcal{I}|$ and bandwidth parameter $\tilde{M}=M$.}
\label{fig:numerics:trigpoly_rand_cuboid}
\end{subfigure}
\caption{Average number of samples over 100 test runs for Algorithm~\ref{alg:PhaseEnc} with $\mathcal{A}_{ 2s, \tilde M }^\mathrm{sub, MC}$, denoted by ``phase'', and  Algorithm~\ref{alg:2dDFT} with $\mathcal{A}_{ 2s, \tilde M }^\mathrm{sub, MC}$, denoted by ``2dim'', on random multivariate trigonometric polynomials, setting $\texttt{randomScale}:=\text{rs}$.}
\end{figure}

In Figure~\ref{fig:numerics:trigpoly_rand_hc}, the average numbers of samples (over 100 test runs) are plotted against the used sparsities $s\in\{10,20,50,100,200,500,1000\}$ for Algorithm~\ref{alg:PhaseEnc} and $s\in\{10,20,50,100\}$ for Algorithm~\ref{alg:2dDFT}.
The magenta line with circles corresponds to Algorithm~\ref{alg:PhaseEnc} with bandwidth parameter $\tilde{M}=dNM\approx 6.7\cdot 10^{11}$ and $\texttt{randomScale}:=0.3$. We observe that the number of samples grow nearly linearly with respect to the sparsity~$s$. Moreover, the success rate is at least 0.99 (99 out of 100 test runs), where we define success such that the support of output (sparse coefficient vector) contains the true frequencies. Next, we reduce the bandwidth $\tilde M$ to $1 + 2 \| \z \|_\infty \max_{ \k \in \mathcal{I} } \| \k \|_1\approx 1.6\cdot 10^{10}$, see also Remark~\ref{rem:Larger1dBandwidth}, and visualize this as solid blue line with squares. This smaller bandwidth causes a decrease in the number of samples of up to 50 percent while only mildly decreasing the success rates to values not below 0.90. Increasing the $\texttt{randomScale}$ parameter to 0.5, denoted by dashed blue line with squares, raises the success rate to 1.00 while achieving still fewer samples than bandwidth parameter $\tilde{M}=dNM\approx 6.7\cdot 10^{11}$ and $\texttt{randomScale}=0.3$ (solid magenta line with circles). The numbers of samples for Algorithm~\ref{alg:2dDFT} are plotted as solid and dashed red lines with triangles for $\texttt{randomScale}:=0.3$ and $0.5$, respectively, choosing the bandwidth $\tilde M:=1 + 2 \| \z \|_\infty \max_{ \k \in \mathcal{I} } \| \k \|_1\approx 1.6\cdot 10^{10}$. We observe that Algorithm~\ref{alg:2dDFT} requires much more samples, more than one order of magnitude, compared to Algorithm~\ref{alg:PhaseEnc}, while achieving similar success rates. For comparison, in case of sparsity $s=100$ and $\texttt{randomScale}=0.5$, Algorithm~\ref{alg:2dDFT} takes almost $M=2\,040\,484\,044$ samples, where the latter would be required by a non-SFT approach which uses all rank-1 lattice nodes.

In Figure~\ref{fig:numerics:trigpoly_rand_cuboid}, we investigate the dependence of the required number of samples of Algorithm~\ref{alg:PhaseEnc} and~\ref{alg:2dDFT} on the spatial dimension~$d$, where we consider the values $d\in\{10,11,\ldots,20\}$. For this, we use a slightly different setting, where we choose $s=100$ random frequencies from a full cuboid of cardinality $\approx 10^{12}$. For instance, we utilize the cuboid $\mathcal{I}:=\{-8,-7,\ldots,7\}^9\times\{-7,-6,\ldots,7\}$, $|\mathcal{I}|\approx 1.03\cdot 10^{12}$, in the case $d=10$ and $\mathcal{I}:=\{-2,-1,\ldots,2\}\times\{-2,-1,0,1\}^{18}\times\{-1,0,1\}$, $|\mathcal{I}|\approx 1.03\cdot 10^{12}$, for $d=20$.
The rank-1 lattice size and the bandwidth parameter are chosen to be $M=\tilde{M}=|\mathcal{I}|$. The generating vector $\z$ follows from the edge lengths of the cube, e.g., $\z:=(1,16,16\cdot 15, 16\cdot 15^2, 16\cdot 15^3, \ldots, 16\cdot 15^8)^\top$ for $d=10$ and $\z:=(1,5,5\cdot 4, 5\cdot 4^2, 5\cdot 4^3, \ldots, 5\cdot 4^{18})^\top$ for $d=20$. Since the expansion $N$ is a factor in the number of samples of Algorithm~\ref{alg:2dDFT}, cf.\ Corollary~\ref{cor:2dDFTRecoverySublinearSFT}, and we want to concentrate on the dependence on the spatial dimension~$d$, we now fix this parameter to $N:=16$ independent of $d$.
Moreover, the $\texttt{randomScale}$ parameter is set to 0.3. The plots indicate that the numbers of samples grow approximately linearly with respect to the dimension $d$ as stated by Corollaries~\ref{cor:PhaseEncRecoverySublinearSFT} and~\ref{cor:2dDFTRecoverySublinearSFT} for Algorithms~\ref{alg:PhaseEnc} and~\ref{alg:2dDFT}, respectively.
The success rates are slightly better compared to the tests from Figure~\ref{fig:numerics:trigpoly_rand_hc}.

\FloatBarrier
\subsubsection{Random frequency sets within 10-dimensional hyperbolic cross and noisy samples}
\label{sec:numerics:trigpoly_rand_hc_noise}

In this section, we again consider random multivariate trigonometric polynomials with frequencies supported within the hyperbolic cross index set $\mathcal{H}_{33}^{10}$ of expansion $N=33$ and use the reconstructing rank-1 lattice with generating vector~$\z$ as stated in~\eqref{equ:numerics_lattice_z_for_H_33_10} and size $M:=2\,040\,484\,044$.
Similarly as in \cite[Section~5.2]{Kaemmerer2017}, we perturb the samples of the trigonometric polynomial by additive complex (white) Gaussian noise $\varepsilon_j\in\C$ with zero mean and standard deviation $\sigma$.
The noise is generated by 
$\varepsilon_j := \sigma/\sqrt{2}\left(\varepsilon_{1,j}+\mathbbm{ i }\varepsilon_{2,j}\right)$ where $\varepsilon_{1,j},\varepsilon_{2,j}$ are independent standard normal distributed.
Since the signal-to-noise ratio (SNR) can be approximately computed by
$$
\mathrm{SNR} \approx \frac{\sum_{j=0}^{M-1} \vert f(\x_j)\vert^2 / M}{\sum_{j=0}^{M-1} \vert\varepsilon_j\vert^2 / M}
\approx \frac{\sum_{\k\in\supp( c )} \vert c_\k (f)\vert^2}{\sigma^2},
$$
this leads to the choice $\sigma:=\sqrt{\sum_{\k\in\supp( c )} \vert c_\k (f)\vert^2}/\sqrt{\mathrm{SNR}}$ for a targeted SNR value.
The SNR is often expressed in the logarithmic decibel scale (dB), $\mathrm{SNR_{dB}} = 10\, \log_{10} \mathrm{SNR}$ and $\mathrm{SNR} = 10^{\mathrm{SNR_{dB}}/10}$,
i.e., a linear $\mathrm{SNR}=10^2$ corresponds to a logarithmic $\mathrm{SNR_{dB}}=20$
and $\mathrm{SNR}=10^3$ corresponds to $\mathrm{SNR_{dB}}=30$.
Here, we perform tests with sparsity $s=100$ and signal-to-noise ratios $\mathrm{SNR_{dB}}\in\{0,5,10,15,20,25,30\}$.
Moreover, we only use the bandwidth parameter $\tilde{M}=1 + 2 \| \z \|_\infty \max_{ \k \in \mathcal{I} } \| \k \|_1\approx 1.6\cdot 10^{10}$.
Besides that, we choose the algorithm parameters as in Figure~\ref{fig:numerics:trigpoly_rand_hc}.

\begin{figure}[!h]
\begin{subfigure}[c]{0.495\textwidth}
		\begin{tikzpicture}[baseline]
		\begin{axis}[
		font=\footnotesize,
		enlarge x limits=true,
		enlarge y limits=true,
		height=0.8\textwidth,
		grid=major,
		width=\textwidth,
		xtick={0,5,10,15,20,25,30},
		ymin=0,ymax=1,
		xlabel={$\mathrm{SNR}_\mathrm{db}$},
		ylabel={success rate},
		legend style={legend cell align=left}, legend pos=south east,
		legend columns = 1,
		]
		\addplot[blue,mark=square,mark size=2.5pt,mark options={solid}] coordinates {
          (0,0.01) (5,0.14) (10,0.47) (15,0.79) (20,0.90) (25,0.94) (30,0.95)
		};
	    \addlegendentry{phase rs=0.3}
		\addplot[blue,dashed,mark=square,mark size=2.5pt,mark options={solid}] coordinates {
          (0,0.03) (5,0.21) (10,0.61) (15,0.87) (20,0.94) (25,0.96) (30,0.98)
		};
	    \addlegendentry{phase rs=0.5}
		\addplot[red,mark=triangle,mark size=2.5pt,mark options={solid}] coordinates {
          (0,0.04) (5,0.36) (10,0.65) (15,0.82) (20,0.85) (25,0.90) (30,0.90)
		};
	    \addlegendentry{2dim rs=0.3}
		\addplot[red,dashed,mark=triangle,mark size=2.5pt,mark options={solid}] coordinates {
          (0,0.11) (5,0.43) (10,0.79) (15,0.93) (20,0.96) (25,0.98) (30,1.00)
		};
	    \addlegendentry{2dim rs=0.5}
		\end{axis}
		\end{tikzpicture}
\subcaption{success rates vs.\ noise level for $s=100$}
\label{fig:numerics:trigpoly_rand_hc:success_noise}
\end{subfigure}
\hfill
\begin{subfigure}[c]{0.495\textwidth}
		\begin{tikzpicture}[baseline]
		\begin{axis}[
		font=\footnotesize,
		enlarge x limits=true,
		enlarge y limits=true,
		height=0.8\textwidth,
		grid=major,
		width=\textwidth,
		xtick={0,5,10,15,20,25,30},
		ymode=log,
		xlabel={$\mathrm{SNR}_\mathrm{db}$},
		ylabel={relative $\ell^2$ error},
		legend style={legend cell align=left}, legend pos=north east,
		legend columns = 1,
		]
		\addplot[blue,mark=square,mark size=2.5pt,mark options={solid}] coordinates {
          (0,9.933e-02) (5,3.393e-02) (10,1.178e-02) (15,4.968e-03) (20,3.468e-03) (25,3.054e-03) (30,2.384e-03)
		};
	    \addlegendentry{phase rs=0.3}
		\addplot[blue,dashed,mark=square,mark size=2.5pt,mark options={solid}] coordinates {
          (0,5.263e-02) (5,1.522e-02) (10,4.414e-03) (15,1.451e-03) (20,7.057e-04) (25,3.943e-04) (30,2.124e-04)
		};
	    \addlegendentry{phase rs=0.5}
		\addplot[red,mark=triangle,mark size=2.5pt,mark options={solid}] coordinates {
          (0,4.929e-02) (5,1.614e-02) (10,8.974e-03) (15,7.105e-03) (20,6.779e-03) (25,6.385e-03) (30,5.738e-03)
		};
	    \addlegendentry{2dim rs=0.3}
		\addplot[red,dashed,mark=triangle,mark size=2.5pt,mark options={solid}] coordinates {
          (0,2.781e-02) (5,8.678e-03) (10,2.728e-03) (15,1.213e-03) (20,6.579e-04) (25,3.642e-04) (30,1.988e-04)
		};
	    \addlegendentry{2dim rs=0.5}
		\end{axis}
		\end{tikzpicture}
\subcaption{relative $\ell^2$ errors vs.\ noise level for $s=100$}
\label{fig:numerics:trigpoly_rand_hc:error_noise}
\end{subfigure}
\caption{Average success rates (all frequencies detected) and relative $\ell^2$ errors over 100 test runs for Algorithm~\ref{alg:PhaseEnc} with $\mathcal{A}_{ 2s, \tilde M }^\mathrm{sub, MC}$, denoted by ``phase'', and  Algorithm~\ref{alg:2dDFT} with $\mathcal{A}_{ 2s, \tilde M }^\mathrm{sub, MC}$, denoted by ``2dim'', on random multivariate trigonometric polynomials within hyperbolic cross $\mathcal{I}:=\mathcal{H}_{33}^{10}$, setting $\texttt{randomScale}:=\text{rs}\in\{0.3,0.5\}$ and bandwidth parameter $\tilde{M}=1 + 2 \| \z \|_\infty \max_{ \k \in \mathcal{I} } \| \k \|_1\approx 1.6\cdot 10^{10}$.}
\label{fig:numerics:trigpoly_rand_hc:noise}
\end{figure}
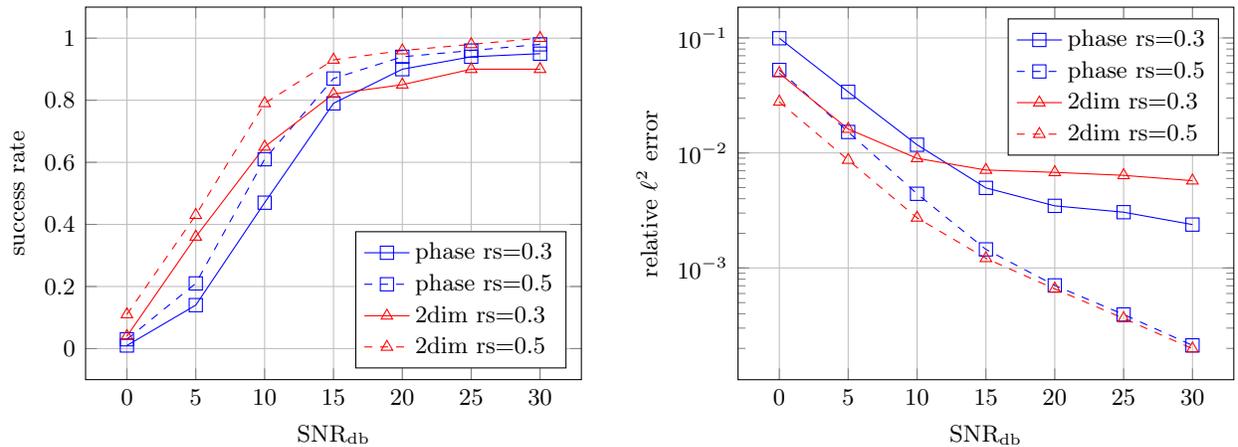

In Figure~\ref{fig:numerics:trigpoly_rand_hc:success_noise}, we visualize the success rates in dependence on the noise level.
For $\texttt{randomScale}\in\{0.3,0.5\}$ and both algorithms, the success rates start at less than 0.12 for $\mathrm{SNR_{dB}}=0$ and grow for increasing signal-to-noise ratios until at least 0.90 for $\mathrm{SNR_{dB}}=30$.
The success rates of Algorithm~\ref{alg:2dDFT} with $\mathcal{A}_{ 2s, \tilde M }^\mathrm{sub, MC}$ (``2dim'') are often higher than for Algorithm~\ref{alg:PhaseEnc} with $\mathcal{A}_{ 2s, \tilde M }^\mathrm{sub, MC}$ (``phase''), which may be caused by the larger numbers of samples for Algorithm~\ref{alg:2dDFT} and the noise model used. Note that the numbers of samples correspond to those in Figure~\ref{fig:numerics:trigpoly_rand_hc} for $s=100$ independent of the noise level.
For Algorithm~\ref{alg:2dDFT} with $\texttt{randomScale}=0.3$, the increase of the success rate seems to stagnate at $\mathrm{SNR_{dB}}=20$, while this does not seem to be the case for $\texttt{randomScale}=0.5$ or Algorithm~\ref{alg:PhaseEnc}.
In particular, this behavior can also be observed in Figure~\ref{fig:numerics:trigpoly_rand_hc:error_noise}, where we plot the average relative $\ell^2$ error of the Fourier coefficients against the signal-to-noise ratio.
Here, we observe that for $\texttt{randomScale}=0.3$, the decrease of the errors for increasing $\mathrm{SNR_{dB}}$ values almost stops once reaching $\mathrm{SNR_{dB}}=20$ for both algorithms. Initially, the average error of Algorithm~\ref{alg:2dDFT} is smaller, but at $\mathrm{SNR_{dB}}=15$ and higher, the average error of Algorithm~\ref{alg:PhaseEnc} is smaller.
In case of $\texttt{randomScale}=0.5$, we observe a distinct decrease for growing signal-to-noise ratios for both algorithms.

\FloatBarrier
\subsubsection{Deterministic frequency set within 10-dimensional hyperbolic cross and noisy samples}

Next, instead of randomly chosen frequencies, we consider frequencies on a $d$-dimensional weighted hyperbolic cross
$$\mathcal{H}_N^{d,\alpha} := \left\{\k \in \mathbbm{Z}^d \colon \prod_{\ell=1}^d \max(1, \ell^\alpha\,|k_\ell|) \leq \frac{N}{2} \quad \text{and} \quad \max_{\ell=1}^d k_\ell<\frac{N}{2}
\right\}.$$
Here, we use $d=10$, $N=33$, $\mathcal{I}:=\mathcal{H}_{33}^{10}$, and $\alpha=1.7$, which yields $s=|\mathcal{H}_{33}^{10,1.7}|=101$. As before, the Fourier coefficients $c_\k$ are randomly chosen from $[-1,1] + \mathbbm{ i }\,[-1,1]$, $|c_\k|\geq 10^{-3}$.
We use the same lattice and bandwidth parameter as in the last subsection as well as the same noise model and parameters.

\begin{figure}[!h]
\begin{subfigure}[c]{0.495\textwidth}
		\begin{tikzpicture}[baseline]
		\begin{axis}[
		font=\footnotesize,
		enlarge x limits=true,
		enlarge y limits=true,
		height=0.8\textwidth,
		grid=major,
		width=\textwidth,
		xtick={0,5,10,15,20,25,30},
		ymin=0,ymax=1,
		xlabel={$\mathrm{SNR}_\mathrm{db}$},
		ylabel={success rate},
		legend style={legend cell align=left}, legend pos=south east,
		legend columns = 1,
		]
		\addplot[blue,mark=square,mark size=2.5pt,mark options={solid}] coordinates {
          (0,0.00) (5,0.12) (10,0.53) (15,0.77) (20,0.95) (25,0.97) (30,0.99)
		};
	    \addlegendentry{phase rs=0.3}
		\addplot[blue,dashed,mark=square,mark size=2.5pt,mark options={solid}] coordinates {
          (0,0.02) (5,0.25) (10,0.69) (15,0.88) (20,0.97) (25,0.99) (30,0.99)
		};
	    \addlegendentry{phase rs=0.5}
		\addplot[red,mark=triangle,mark size=2.5pt,mark options={solid}] coordinates {
          (0,0.06) (5,0.38) (10,0.76) (15,0.91) (20,0.96) (25,0.99) (30,0.99)
		};
	    \addlegendentry{2dim rs=0.3}
		\addplot[red,dashed,mark=triangle,mark size=2.5pt,mark options={solid}] coordinates {
          (0,0.09) (5,0.49) (10,0.85) (15,0.93) (20,0.98) (25,0.99) (30,1.00)
		};
	    \addlegendentry{2dim rs=0.5}
		\end{axis}
		\end{tikzpicture}
\subcaption{success rates vs.\ noise level for $s=100$}
\label{fig:numerics:trigpoly_det_whc:success_noise}
\end{subfigure}
\hfill
\begin{subfigure}[c]{0.495\textwidth}
		\begin{tikzpicture}[baseline]
		\begin{axis}[
		font=\footnotesize,
		enlarge x limits=true,
		enlarge y limits=true,
		height=0.8\textwidth,
		grid=major,
		width=\textwidth,
		xtick={0,5,10,15,20,25,30},
		ymode=log,
		xlabel={$\mathrm{SNR}_\mathrm{db}$},
		ylabel={relative $\ell^2$ error},
		legend style={legend cell align=left}, legend pos=north east,
		legend columns = 1,
		]
		\addplot[blue,mark=square,mark size=2.5pt,mark options={solid}] coordinates {
          (0,8.628e-02) (5,2.618e-02) (10,7.282e-03) (15,2.484e-03) (20,8.688e-04) (25,4.755e-04) (30,2.540e-04)
		};
	    \addlegendentry{phase rs=0.3}
		\addplot[blue,dashed,mark=square,mark size=2.5pt,mark options={solid}] coordinates {
          (0,4.885e-02) (5,1.438e-02) (10,3.904e-03) (15,1.406e-03) (20,6.471e-04) (25,3.517e-04) (30,2.005e-04)
		};
	    \addlegendentry{phase rs=0.5}
		\addplot[red,mark=triangle,mark size=2.5pt,mark options={solid}] coordinates {
          (0,3.950e-02) (5,1.213e-02) (10,3.717e-03) (15,1.656e-03) (20,8.436e-04) (25,4.471e-04) (30,2.540e-04)
		};
	    \addlegendentry{2dim rs=0.3}
		\addplot[red,dashed,mark=triangle,mark size=2.5pt,mark options={solid}] coordinates {
          (0,2.509e-02) (5,7.828e-03) (10,2.359e-03) (15,1.229e-03) (20,6.299e-04) (25,3.517e-04) (30,1.956e-04)
		};
	    \addlegendentry{2dim rs=0.5}
		\end{axis}
		\end{tikzpicture}
\subcaption{relative $\ell^2$ errors vs.\ noise level for $s=100$}
\label{fig:numerics:trigpoly_det_whc:error_noise}
\end{subfigure}
\caption{Average success rates (all frequencies detected) and relative $ \ell^2 $ errors over 100 test runs for Algorithm~\ref{alg:PhaseEnc} with $\mathcal{A}_{ 2s, \tilde M }^\mathrm{sub, MC}$, denoted by ``phase'', and  Algorithm~\ref{alg:2dDFT} with $\mathcal{A}_{ 2s, \tilde M }^\mathrm{sub, MC}$, denoted by ``2dim'', on multivariate trigonometric polynomials with (deterministic) frequencies on weighted hyperbolic cross within hyperbolic cross $\mathcal{I}:=\mathcal{H}_{33}^{10}$, setting $\texttt{randomScale}:=\text{rs}\in\{0.3,0.5\}$ and bandwidth parameter $\tilde{M}=1 + 2 \| \z \|_\infty \max_{ \k \in \mathcal{I} } \| \k \|_1\approx 1.6\cdot 10^{10}$.}
\label{fig:numerics:trigpoly_det_whc:noise}
\end{figure}
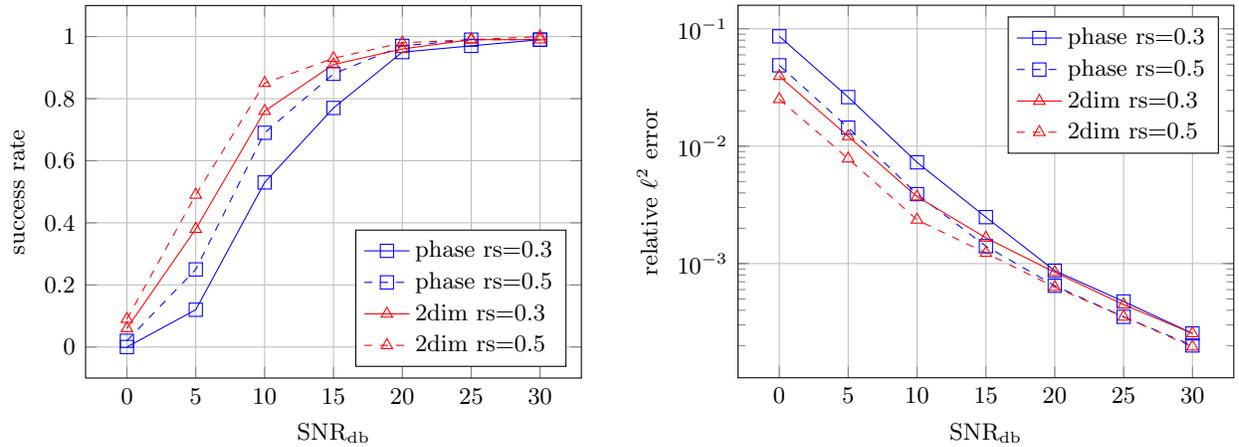

In Figure~\ref{fig:numerics:trigpoly_det_whc:noise}, we depict the obtained results. In particular, the results in Figure~\ref{fig:numerics:trigpoly_det_whc:success_noise} are very similar to the ones for randomly chosen frequencies in Figure~\ref{fig:numerics:trigpoly_rand_hc:success_noise}.
For the case of deterministic frequencies in Figure~\ref{fig:numerics:trigpoly_det_whc:success_noise}, the success rates are slightly better. Moreover, we do not observe the ``stagnation'' of the success rates for Algorithm~\ref{alg:2dDFT} with $\texttt{randomScale}=0.3$. Correspondingly, the relative $\ell^2$ errors, as shown in Figure~\ref{fig:numerics:trigpoly_det_whc:error_noise}, decrease distinctly for growing signal-to-noise ratios.
Algorithm~\ref{alg:2dDFT} performs slightly better than Algorithm~\ref{alg:PhaseEnc}, but also requires more than one order of magnitude more samples, similar to the results shown in Figure~\ref{fig:numerics:trigpoly_rand_hc} for $s=100$.

\subsection{Compressible case in 10 dimensions}

In this section, we apply the methods on a test function which is not exactly sparse but compressible. In addition, we also consider noisy samples as in Section~\ref{sec:numerics:trigpoly_rand_hc_noise}.
We use the 10-variate periodic test function $f\colon\mathbbm{T}^{10}\rightarrow\mathbbm{R}$,
\begin{equation} \label{equ:f:10}
	f(\x):=\prod_{\ell\in\{0,2,7\}}N_2(x_\ell) + \prod_{\ell\in\{1,4,5,9\}}N_4(x_\ell) + \prod_{\ell\in\{3,6,8\}}N_6(x_\ell),
\end{equation}
from \cite[Section~3.3]{Potts2016} and \cite[Section~5.3]{Kaemmerer2017} with infinitely many non-zero Fourier coefficients~$c_\k(f)$,
where $N_m:\mathbbm{T}\rightarrow\mathbbm{R}$ is the B-Spline of order $m\in\mathbbm{N}$,
$$N_m(x) := C_m \sum_{k\in\mathbbm{Z}} \operatorname{sinc}\left(\frac{\pi}{m}k\right)^m (-1)^k \,\mathrm{e}^{2\pi\mathrm{i}kx},$$
with a constant $C_m>0$ such that $\Vert N_m \Vert_{L^2(\mathbbm{T})}=1$.
We remark that each B-Spline $N_m$ of order $m\in\mathbbm{N}$ is a piece-wise polynomial of degree $m-1$.
We apply Algorithm~\ref{alg:PhaseEnc} with $\mathcal{A}_{ 2s, \tilde M }^\mathrm{sub, MC}$ and use the sparsity parameters $s\in\{50,100,250,500,1000,2000\}$, which corresponds to $2s\in\{100,200,500,1000,2000,4000\}$ frequencies and Fourier coefficients for the output of Algorithm~\ref{alg:PhaseEnc}.
We use the frequency set $\mathcal{I}:=\mathcal{H}_{33}^{10}$ and $\texttt{randomScale}:=\text{rs}\in\{0.05,0.1\}$.
Moreover, we work with the same rank-1 lattice as in Section~\ref{sec:numerics:trigpoly_rand_hc_noise}.

The obtained basis index sets $\supp(\mathbf{b})$ should ``consist of'' the union of three lower dimensional manifolds,
a three-dimensional hyperbolic cross in the dimensions $1,3,8$;
a four-dimensional hyperbolic cross in the dimensions $2,5,6,10$;
and a three-dimensional hyperbolic cross in the dimensions $4,7,9$.
All tests are performed 100 times and the relative $L^2$ approximation error
$$
\frac{ \Vert f- \sum_{\k\in\supp(\mathbf{b})} b_\k \, \mathbbm{ e }^{ 2 \pi \mathbbm{ i } \k \cdot \circ } \Vert_{L^2} }{ \Vert f\Vert_{L^2} }
=
\frac{
\sqrt{\Vert f\Vert_{L^2}^2 - \sum_{\k\in \supp(\mathbf{b})}\vert c_\k (f)\vert^2 + \sum_{\k\in \supp(\mathbf{b})}\vert b_{\k}-c_{\k} (f)\vert^2} }{ \Vert f\Vert_{L^2} }
$$
is computed each time.

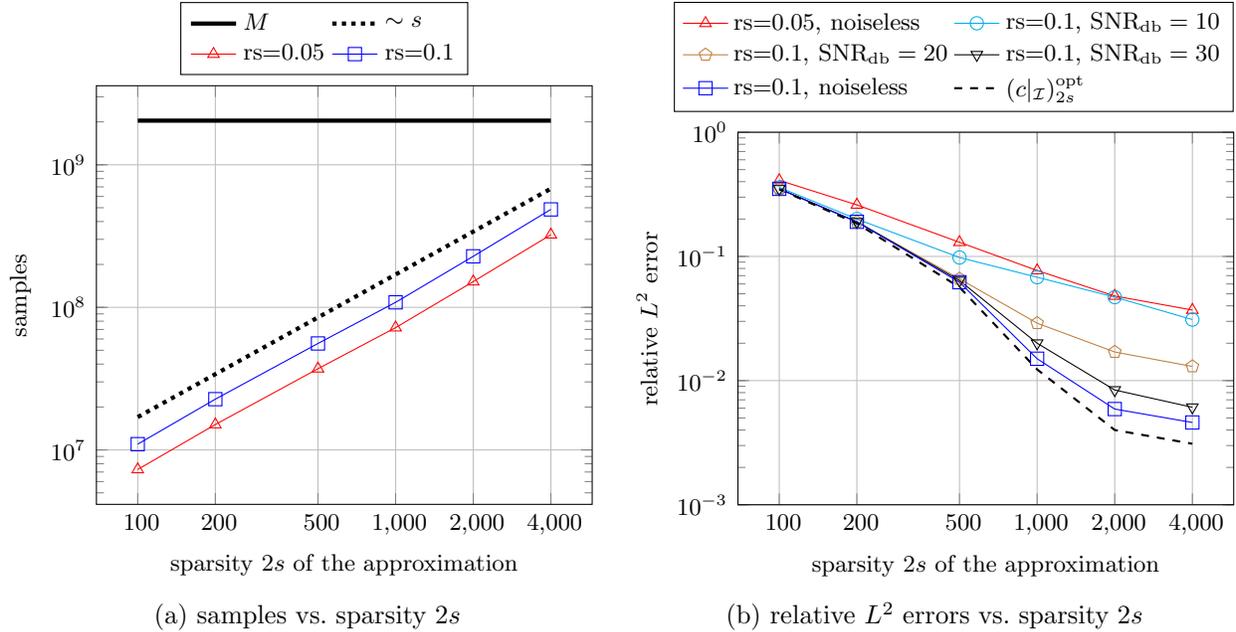
\begin{figure}[!h]
\begin{subfigure}[c]{0.495\textwidth}
		\begin{tikzpicture}[baseline]
		\begin{axis}[
		font=\footnotesize,
		enlarge x limits=true,
		enlarge y limits=true,
		height=0.875\textwidth,
		grid=major,
		width=\textwidth,
		xtick={100,200,500,1000,2000,4000},
		xmode=log,
		ymode=log,
		xticklabel={
			\pgfkeys{/pgf/fpu=true}
			\pgfmathparse{exp(\tick)}%
			\pgfmathprintnumber[fixed relative, precision=3]{\pgfmathresult}
			\pgfkeys{/pgf/fpu=false}
		},
		xlabel={sparsity $2s$ of the approximation},
		ylabel={samples},
		legend style={legend cell align=left, at={(0.75,1.2)}},
		legend columns = 2,
		]
		\addplot[black,no marks,ultra thick] coordinates {
          (100,2040484044) (4000,2040484044)
        };
        \addlegendentry{$M$}
        \addplot[dotted,ultra thick,samples=100,black,domain=100:4000] {1.7e5*x};
        \addlegendentry{$\sim s$}
		\addplot[red,mark=triangle,mark size=2.5pt,mark options={solid}] coordinates {
         (100,7294709.070) (200,15071065.240) (500,37117427.600) (1000,72078792.390) (2000,151559318.900) (4000,323125263.560)
        };
        \addlegendentry{rs=0.05}
		\addplot[blue,mark=square,mark size=2.5pt,mark options={solid}] coordinates {
          (100,10979401.950) (200,22700795.370) (500,55816226.620) (1000,108439173.810) (2000,228025892.160) (4000,486422588.960)
		};
	    \addlegendentry{rs=0.1}
		\end{axis}
		\end{tikzpicture}
\subcaption{samples vs.\ sparsity $2s$}
	\label{fig:approx_sparse:fourier_d10:samples_vs_s}
\end{subfigure}
	\hfill
\begin{subfigure}[c]{0.495\textwidth}
		\begin{tikzpicture}[baseline]
		\begin{axis}[
		font=\footnotesize,
		enlarge x limits=true,
		height=0.8\textwidth,
		grid=major,
		width=\textwidth,
		xtick={100,200,500,1000,2000,4000},
		ymin=1e-3,ymax=1,
		xmode=log,
		ymode=log,
		xticklabel={
	\pgfkeys{/pgf/fpu=true}
	\pgfmathparse{exp(\tick)}%
	\pgfmathprintnumber[fixed relative, precision=3]{\pgfmathresult}
	\pgfkeys{/pgf/fpu=false}
},
		xlabel={sparsity $2s$ of the approximation},
		ylabel={relative $L^2$ error},
		legend style={legend cell align=left, at={(1,1.35)}},
		legend columns = 2,
		]
		\addplot[red,mark=triangle,mark size=2.5pt,mark options={solid}] coordinates {
          (100,4.1e-01) (200,2.6e-01) (500,1.3e-01) (1000,7.7e-02) (2000,4.8e-02) (4000,3.7e-02)
		};
	    \addlegendentry{rs=0.05, noiseless}
        \addplot[cyan,mark=o,mark size=2.5pt,mark options={solid}] coordinates {
          (100,3.6e-01) (200,2.0e-01) (500,9.8e-02) (1000,6.8e-02) (2000,4.7e-02) (4000,3.1e-02)
        };
        \addlegendentry{rs=0.1, $\mathrm{SNR}_\mathrm{db}=10$}
        \addplot[brown,mark=pentagon,mark size=2.5pt,mark options={solid}] coordinates {
          (100,3.5e-01) (200,1.9e-01) (500,6.6e-02) (1000,2.9e-02) (2000,1.7e-02) (4000,1.3e-02)
        };
        \addlegendentry{rs=0.1, $\mathrm{SNR}_\mathrm{db}=20$}
        \addplot[black,mark=triangle,mark size=2.5pt,mark options={solid,rotate=180}] coordinates {
          (100,3.5e-01) (200,1.9e-01) (500,6.4e-02) (1000,2.0e-02) (2000,8.4e-03) (4000,6.1e-03)
        };
        \addlegendentry{rs=0.1, $\mathrm{SNR}_\mathrm{db}=30$}
		\addplot[blue,mark=square,mark size=2.5pt,mark options={solid}] coordinates {
          (100,3.5e-01) (200,1.9e-01) (500,6.2e-02) (1000,1.5e-02) (2000,5.9e-03) (4000,4.6e-03)
		};
	    \addlegendentry{rs=0.1, noiseless}
		\addplot[black, no marks, dashed, thick] coordinates {
          (100,0.3471) (200,0.1843) (500,0.0565) (1000,0.0123) (2000,0.0040) (4000,0.0031)
		};
	    \addlegendentry{$(c\rvert_{ \mathcal{I} })_{2s}^\mathrm{opt}$}
		\end{axis}
		\end{tikzpicture}
\subcaption{relative $L^2$ errors vs.\ sparsity $2s$}
	\label{fig:approx_sparse:fourier_d10:error_vs_s}
\end{subfigure}
	\caption{%
Average number of samples and relative $L^2$ errors over 100 test runs for Algorithm~\ref{alg:PhaseEnc} with $\mathcal{A}_{ 2s, \tilde M }^\mathrm{sub, MC}$ on 10-dimensional test function~\eqref{equ:f:10} consisting of tensor products of B-Splines of different order. Search space is unweighted hyperbolic cross $\mathcal{I}:=\mathcal{H}_{33}^{10}$ with SFT parameters $\texttt{randomScale}:=\text{rs}\in\{0.3,0.5\}$ and $ \tilde{M}=1 + 2 \| \z \|_\infty \max_{ \k \in \mathcal{I} } \| \k \|_1\approx 1.6\cdot 10^{10}$. }
	\label{fig:approx_sparse:fourier_d10:samples_error_vs_s}
\end{figure}

In Figure~\ref{fig:approx_sparse:fourier_d10:samples_vs_s}, we visualize the average number of samples against the sparsity $2s$ of the approximation. We observe an almost linear increase with respect to $2s$.
In Figure~\ref{fig:approx_sparse:fourier_d10:error_vs_s}, we show the average relative errors for $\texttt{randomScale}\in\{0.05,0.1\}$ in the noiseless case as well as $\texttt{randomScale}=0.1$ for $\mathrm{SNR}_\mathrm{db}\in\{10,20,30\}$. In general, for increasing sparsity, the errors become smaller. For $\texttt{randomScale}=0.05$ in the noiseless case and $\texttt{randomScale}=0.1$ with $\mathrm{SNR}_\mathrm{db}=10$, the average error are similar and stay above $3\cdot 10^{-2}$ even for sparsity $2s=4000$. For higher signal-to-noise ratio, the error decreases further. For $\mathrm{SNR}_\mathrm{db}=30$, the obtained average error is $6.1\cdot 10^{-3}$ for $2s=4000$, which is only approximately twice as high as the best possible error when using the $2s$ largest (by magnitude) Fourier coefficients $c_\k(f)$ with the restriction $\k\in\mathcal{I}:=\mathcal{H}_{33}^{10}$. The latter is plotted in Figure~\ref{fig:approx_sparse:fourier_d10:error_vs_s} as dashed line without markers.

\bibliographystyle{abbrv}
\bibliography{References}

\end{document}